\documentclass[12pt]{amsart}
\usepackage{amsmath,amsthm,amscd,amssymb,amsfonts}
\usepackage{mathrsfs,upgreek}
\usepackage{hyperref}
\usepackage{mathtools}
\usepackage{graphicx}
\usepackage{fullpage}
\usepackage{cleveref}

\makeatletter
\let\@wraptoccontribs\wraptoccontribs
\makeatother

\usepackage[sans]{dsfont} 
\usepackage[T1]{fontenc}
\DeclareMathAlphabet{\mathpzc}{OT1}{pzc}{m}{it} 

\numberwithin{equation}{section} 
\numberwithin{figure}{section} 

\theoremstyle{plain}
\newtheorem{theo}{Theorem}
\newtheorem{prop}{Proposition}[section]
\newtheorem{coro}[prop]{Corollary}
\newtheorem{lemm}[prop]{Lemma}

\theoremstyle{definition}
\newtheorem{defi}[prop]{Definition}

\theoremstyle{remark}
\newtheorem{rema}[prop]{Remark}
\newtheorem{exam}[prop]{Example}

\newtheoremstyle{citing}
  {3pt}
  {3pt}
  {\itshape}
  {}
  {\bfseries}
  {.}
  {.5em}
  {\thmnote{#3}}

\theoremstyle{citing}
\newtheorem*{generic}{}

%
%

\newcommand{\C}{\mathbb{C}}

\newcommand{\N}{\mathbb{N}}

\newcommand{\R}{\mathbb{R}}

\newcommand{\cP}{\mathcal{P}}

\newcommand{\cT}{\mathcal{T}}

\newcommand{\fD}{\mathfrak{D}}

\newcommand{\sB}{\mathscr{B}}
\newcommand{\sC}{\mathscr{C}}
\newcommand{\sD}{\mathscr{D}}

\newcommand{\sF}{\mathscr{F}}

\newcommand{\sL}{\mathscr{L}}
\newcommand{\sM}{\mathscr{M}}

\newcommand{\sP}{\mathscr{P}}
\newcommand{\sQ}{\mathscr{Q}}

\newcommand{\sS}{\mathscr{S}}

%
%

\newcommand{\hI}{\widehat{I}}

\newcommand{\hR}{\widehat{R}}

\newcommand{\hvarphi}{\widehat{\varphi}}

\newcommand{\tJ}{\widetilde{J}}

\newcommand{\tP}{\widetilde{P}}

\newcommand{\tY}{\widetilde{Y}}

\newcommand{\teta}{\widetilde{\teta}}

\newcommand{\tpi}{\widetilde{\pi}}

\newcommand{\tphi}{\widetilde{\phi}}
\newcommand{\tvarphi}{\widetilde{\varphi}}

%
%

\newcommand{\partn}[1]{{\smallskip \noindent \textbf{#1.}}}
\renewcommand{\=}{\coloneqq}
\newcommand{\dd}{\hspace{1pt}\operatorname{d}\hspace{-1pt}}

\DeclareMathOperator{\diam}{diam}


\DeclareMathOperator{\supp}{supp} 



%
%

\newcommand{\whf}{\widehat{f}}

\newcommand{\uell}{\underline{\ell}}
\newcommand{\hf}{\widehat{f}}
\newcommand{\tf}{\widetilde{f}}
\newcommand{\uz}{\underline{z}}
\newcommand{\mpf}{f}
\newcommand{\tmpf}{\tf}
\newcommand{\hmpf}{\hf}
\newcommand{\Pf}{P}
\newcommand{\sPf}{\sP}

\newcommand{\one}{\mathds{1}}

\newcommand{\Sspace}{C_\dag^{1,\gamma}(\R)}

%
%

\begin{document}

\title{phase transitions in temperature for\\ intermittent maps}
\author{Daniel Coronel}
\address{Facultad de Matem\'aticas, Pontificia Universidad Cat\'olica de Chile, Campus San Joaqu\'in, Avenida Vicu\~{n}a Mackenna 4860, 
Santiago, Chile.} 
\email{acoronel@uc.cl}
\author{Juan Rivera-Letelier}
\address{Department of Mathematics, University of Rochester. Hylan Building, Rochester, NY~14627, U.S.A.}
\email{riveraletelier@gmail.com}
\urladdr{\url{http://rivera-letelier.org/}}
\contrib[with an appendix in collaboration with]{Irene Inoquio-Renteria}
\begin{abstract}
This article characterizes phase transitions in temperature within a specific space of \textsc{Hölder} continuous potentials, distinguished by their regularity and asymptotic behavior at zero. We also characterize the phase transitions in temperature that are robust within this space. Our results reveal a connection between phase transitions in temperature and ergodic optimization.\end{abstract}

\maketitle

%
%

\section{Introduction}
\label{s:intro}

 Since the last century, the probabilistic approach to studying "complex" dynamical systems has become a significant paradigm. To carry out such a study, one requires a probability measure that is invariant under the dynamics and describes the behavior of most orbits. Thermodynamic formalism has proven to be a powerful tool for selecting such measures in the context of uniformly hyperbolic and expanding maps. For these systems, every sufficiently regular potential~$\varphi$ admits a unique equilibrium state that is fully supported, has positive entropy, and enjoys strong statistical properties. 
Moreover, the pressure function associated with the one-parameter family of potentials $\beta\varphi$, for $\beta \in (0,+\infty)$, is real-analytic in the parameter~$\beta$. This property is commonly referred to as the absence of phase transitions in temperature for the potential.
However, these results generally hold only when the dynamics are uniformly hyperbolic or expanding, or when the potential is sufficiently regular. Notable examples of phase transitions in temperature include the potentials constructed by~\mbox{\textsc{Hofbauer}} in~\cite{Hof77} for the one-sided full shift on two symbols, and the geometric potential for the \textsc{Manneville–Pomeau} maps~\cite{PreSla92}.

The \textsc{Manneville–Pomeau} family of dynamical systems belongs to the class of intermittent maps, which have been extensively studied in smooth ergodic theory. These maps provide the simplest examples of non-expanding dynamics; see, for instance, \cite{BomCar23,BarLepLop12,BalTod16,BruTerTod19,CliTho13,CasVar13,GarIno22,GarIno24,Gou04b,Hol05,Klo20,Lop93,LiRiv14b,LiRiv14a,LivSauVai99,MelTer12,PomMan80,PreSla92,PolWei99,Sar01a,Sar02,Tha00,You99b}.

Most previous work has focused on the geometric potential. However, some papers deal with \textsc{Hölder} continuous potentials; see, for example, \cite{Klo20,LiRiv14b,LiRiv14a}. In this article, we take an intermediate approach. Our goal is to characterize phase transitions in temperature for the \textsc{Manneville–Pomeau} family within a specific class of \textsc{Hölder} continuous potentials, distinguished by its regularity and asymptotic behavior at zero (see Theorem~\ref{t:sarig} and Corollary~\ref{c:sarig}).
This class, introduced by \textsc{Sarig} in~\cite{Sar01a}, enables, among other things, the study of the thermodynamic formalism for potentials near the geometric potential. Another advantage of this class is that it allows for a more tractable technical analysis compared to the full class of \textsc{Hölder} continuous potentials. More precisely, the additional regularity of potentials in this class implies a form of “bounded variation of ergodic sums.” At the same time, the asymptotic behavior at zero allows for a clear description of potentials exhibiting robust phase transitions in temperature (see Theorem~\ref{t:robust sarig}).
Moreover, when the indifferent fixed point is “flatter” (i.e., for $\alpha > 1$), this class of potentials reveals an additional regularity in phase transitions in temperature that is not present in the \textsc{Hölder} class (see Theorem~\ref{t:sarig alpha} and Corollary~\ref{c:robust vs nonrobust}). This final result aligns with the philosophy proposed by \textsc{Israel} in~\cite{Isr79}, which states that, to observe interesting phase diagrams, one must consider smaller interaction spaces. In our setting, interactions correspond to potentials.

A disadvantage of our smaller class of potentials is that it excludes the historically significant class of \textsc{Hölder} continuous potentials. We address these in the companion paper~\cite{CorRiv25b}, providing a complete topological description of their phase diagram.

To state our results more precisely, we begin with some definitions.

\subsection{Phase transitions in temperature}
\label{ss:pt}

Let~$\alpha > 0$ be given, and let~$f \colon [0,1] \to [0,1]$ be defined by
\begin{equation}
\label{d:map}
f(x) \= 
\begin{cases}
x(1 + x^\alpha), & \text{if } x(1 + x^\alpha) \le 1;\\
x(1 + x^\alpha) - 1, & \text{otherwise}.
\end{cases}
\end{equation}
Denote by~$\sM$ the space of \textsc{Borel} probability measures that are invariant under~$f$. For every measure~$\mu \in \sM$, we denote by~$h_\mu$ its entropy.
Let~$\varphi$ be a continuous function on~$[0,1]$. We call~$\varphi$ a \emph{potential} and define the \emph{pressure}~$P(\varphi)$ of the potential~$\varphi$ by
\begin{equation}
P(\varphi) \= \sup\left\{ h_\mu + \int \varphi \dd\mu : \mu \in \sM \right\}.
\end{equation}
A measure~$\mu \in \sM$ that realizes the supremum above is called an \emph{equilibrium state} of~$f$ for~$\varphi$.
We say that a measure~$\nu \in \sM$ is \emph{maximizing for the potential~$\varphi$} if
\begin{equation}
\int \varphi \dd\nu = \sup_{\mu \in \sM} \int \varphi \dd\mu.
\end{equation}

We say that a continuous potential~$\varphi$ exhibits a \emph{phase transition in temperature} if there exists~$\beta_* \in (0,+\infty)$ such that the function $\beta \mapsto P(\beta\varphi)$ is not real-analytic at~$\beta_*$. The terminology comes from Statistical Mechanics, where~$\beta$ is interpreted as the inverse temperature.
If the potential~$\varphi$ is \textsc{Hölder} continuous, then there is at most one point in~$(0,+\infty)$ where the function 
$\beta \mapsto P(\beta\varphi)$ fails to be real-analytic, and if a phase transition occurs at~$\beta_*$, then for all~$\beta \ge \beta_*$, one has $P(\beta\varphi) = \beta \varphi(0)$; see Corollary~\ref{c:tpt} in~\S\ref{ss:about the proof}.
In particular, if a \textsc{Hölder} continuous potential~$\varphi$ has a phase transition in temperature at~$\beta_* \in (0,+\infty)$, then for every~$\beta \ge \beta_*$, the invariant measure~$\delta_0$ is an equilibrium state for~$\beta\varphi$. Consequently, it is also a maximizing measure for~$\varphi$.
Moreover,~$\delta_0$ is the unique maximizing measure for~$\varphi$ (see~\S\ref{ss:organization}).

In Theorem~\ref{t:sarig} and Corollary~\ref{c:sarig} below, we show that for~$\varphi$ in a suitable class of potentials, the uniqueness of~$\delta_0$ as the maximizing measure for~$\varphi$ implies the existence of a phase transition in temperature for~$\varphi$. These results establish a strong connection between phase transitions in temperature and the theory of Ergodic Optimization~\cite{Jen19}.
The class of potentials we consider behaves asymptotically like~$c x^\gamma$ near zero, with~$c \in \R$ and~$\gamma \in (0,+\infty)$. For example, the geometric potential~$-\log Df$ has this asymptotic form with~$c = -(\alpha + 1)$ and~$\gamma = \alpha$.
Theorem~\ref{t:sarig} shows that the occurrence of a phase transition in temperature is not merely a local property depending on the asymptotic behavior of the potential at zero; it also has a global component that can be characterized via maximizing measures. The dependence of the phase transition on the asymptotic behavior at zero is subtle, as further illustrated by Theorem~\ref{t:sarig}.
However, when the exponent of the system~$\alpha$ and the exponent of the potential~$\gamma$ coincide, the potentials exhibiting a phase transition in temperature display an additional rigidity, as shown in Theorem~\ref{t:sarig alpha}.

\begin{theo}
\label{t:sarig}
Let~$\alpha$ be in~$(0,+\infty)$ and let~$f$ be the \textsc{Manneville–Pomeau} map of parameter~$\alpha$.
Let~$\varphi: [0,1]\to \R$ be a continuous potential  with continuous derivative on~$(0,1]$ such that
there are~$c$ in $\R$ and~$\gamma>0$ verifying 
\begin{equation}
\label{e:leading}
c=\lim_{x\to 0^+} \frac{\varphi'(x)}{\gamma x^{\gamma-1}}.
\end{equation}
The following hold.
\begin{enumerate}
\item[1.] If $\gamma\le \alpha$ and $c>0$, then $\delta_0$ is not a maximizing measure for~$\varphi$;
\item[2.] If $\gamma\le \alpha$, $c< 0$ and~$\delta_0$ is the unique maximizing measure for~$\varphi$, then~$\varphi$ has a phase transition in temperature; 
\item [3.] If~$\varphi$ has a phase transition in temperature, then~$\gamma \le \alpha, c\le 0$ and~$\delta_0$ is the unique maximizing measure for~$\varphi$.
\end{enumerate}
\end{theo}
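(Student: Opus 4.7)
The plan is to prove the three items in order. Two common ingredients simplify matters: whenever $\delta_0$ is maximizing, applying the maximizing inequality to empirical measures of periodic orbits (dense in $[0,1]$) gives $\varphi \le \varphi(0)$ pointwise; and integrating~\eqref{e:leading} yields $\varphi(x) - \varphi(0) = c x^\gamma + o(x^\gamma)$ as $x \to 0^+$.

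For item~1, I construct for each large $n$ a periodic point $p_n$ of period $n$ whose first $n-1$ iterates shadow the parabolic escape at $0$, so that $f^k(p_n) \asymp (k\alpha)^{-1/\alpha}$ for $1 \le k \le n-1$; such points arise via inverse branches of the lower parabolic branch of $f$. The orbital measure $\mu_n$ satisfies
\begin{equation*}
\int \varphi \, d\mu_n - \varphi(0) = \frac{c + o(1)}{n} \sum_{k=1}^{n-1} (k\alpha)^{-\gamma/\alpha},
\end{equation*}
which is strictly positive for large $n$ since $\gamma \le \alpha$ forces the tail sum to grow at least like $n^{1-\gamma/\alpha}$ (logarithmically when $\gamma = \alpha$), so $\delta_0$ is not maximizing.

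For item~2, the centerpiece is the uniform bound
\begin{equation*}
h_\mu \le C \Bigl( \varphi(0) - \int \varphi \, d\mu \Bigr) \quad \text{for all } \mu \in \sM,
\end{equation*}
for some $C > 0$. Granting this, for $\beta > C$ we have $h_\mu + \beta \int \varphi \, d\mu \le \beta \varphi(0)$ with equality only at $\mu = \delta_0$, so $P(\beta \varphi) = \beta \varphi(0)$ on $[C, \infty)$; since $P(0) = \htop(f) > 0$ and $\beta \mapsto P(\beta \varphi)$ is continuous and convex, $P(\beta \varphi) > \beta \varphi(0)$ for small $\beta$, forcing a phase transition at some $\beta_* \in (0, C]$. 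The entropy-gap bound itself combines: Ruelle's inequality $h_\mu \le \int \log f' \, d\mu \le (\alpha+1) \int x^\alpha \, d\mu$ using $\log f'(x) \le (\alpha+1) x^\alpha$ on $[0,1]$; the local bound $\varphi(0) - \varphi(x) \ge \tfrac{|c|}{2} x^\gamma$ near $0$ from $c < 0$; the comparison $x^\alpha \le x^\gamma$ on $[0,1]$ from $\gamma \le \alpha$; and weak-$*$ compactness of $\sM$ together with upper semicontinuity of $\mu \mapsto h_\mu$ and uniqueness of $\delta_0$ as maximizer, which give a uniform positive lower bound on the gap for measures staying a definite distance from $\delta_0$. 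A two-case split (measures close to versus far from $\delta_0$) then yields $C$.

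For item~3, the \textsc{H\"older} statement recalled in \S\ref{ss:about the proof} already gives uniqueness of $\delta_0$ as maximizing measure, whence $\varphi \le \varphi(0)$ pointwise and the asymptotic rules out $c > 0$. To obtain $\gamma \le \alpha$ I argue contrapositively: if $\gamma > \alpha$ then entropy will dominate the gap as $\mu \to \delta_0$. Concretely, from the standard Markov tower produced by inducing $f$ on $[x^{\ast}, 1]$ (with return-time tail $\asymp n^{-(1+1/\alpha)}$), I extract a sequence of invariant measures $\mu_n \to \delta_0$ with $\int x^\gamma \, d\mu_n \ll \int x^\alpha \, d\mu_n$ (since $\gamma > \alpha$ reverses the pointwise comparison on $[0,1]$) and with $h_{\mu_n}$ comparable to $\int x^\alpha \, d\mu_n$; thus $h_{\mu_n} / (\varphi(0) - \int \varphi \, d\mu_n) \to \infty$, contradicting the inequality $h_\mu \le \beta_* (\varphi(0) - \int \varphi \, d\mu)$ that the phase transition at $\beta_*$ would impose. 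The hard part of the whole proof is the entropy-gap inequality of item~2: the contribution of $\mu([\delta,1])$ in the near-$\delta_0$ regime is easy only if $\varphi < \varphi(0)$ strictly on $[\delta, 1]$, which is not a standing hypothesis; closing this requires either a subaction argument in the spirit of \textsc{Conze}--\textsc{Guivarc'h}/\textsc{Bousch} adapted to this non-uniformly hyperbolic setting, or a finer inducing-based estimate exploiting the $C^1$ regularity of $\varphi$ on $(0,1]$. The \textsc{Sarig}-style tower estimates driving item~3 are the secondary technical difficulty.
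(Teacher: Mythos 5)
Your item~1 matches the paper's argument: both construct a period-$n$ periodic point $p_n$ near the inner boundary of $J_0$ whose forward iterates march outward along $(x_{n+1-j},x_{n-j}]$, and both compute that the Birkhoff sum grows like $\sum_k (\alpha k)^{-\gamma/\alpha}$, which diverges when $\gamma \le \alpha$. Your item~3 is a genuinely different route from the paper's. The paper compares $\varphi-\varphi(0) \ge c'\omega_\gamma$ (from Lemma~\ref{l:potential form}) with the model potential $\omega_\gamma(x)=-x^\gamma$; since Proposition~\ref{p:omega transitions} says $\omega_\gamma$ has no phase transition when $\gamma>\alpha$, monotonicity of pressure and Corollary~\ref{c:tpt} give $P(\beta\varphi)>\beta\varphi(0)$ for all $\beta$ at once. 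Your contrapositive tower-measure argument is morally a direct proof of the same fact (essentially a re-derivation of Proposition~\ref{p:omega transitions} in disguise), so it should work, but it is heavier.

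One minor error: your preliminary claim that ``$\delta_0$ maximizing $\Rightarrow \varphi\le\varphi(0)$ pointwise, via density of periodic orbits'' is false. Applying the maximizing inequality to the orbital measure of a period-$n$ point only bounds the Birkhoff average $\tfrac1n S_n\varphi(p)$, not $\varphi$ at the individual points of the orbit; the only fixed points of $f$ are $0$ and $1$, so this route yields only $\varphi(1)\le\varphi(0)$. Fortunately you never really need the pointwise bound: $c\le 0$ already follows from item~1, as the paper also observes.

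The serious issue is item~2. Your plan is to prove a uniform entropy-gap inequality $h_\mu \le C\bigl(\varphi(0)-\int\varphi\,d\mu\bigr)$ and then read off $P(\beta\varphi)=\beta\varphi(0)$ for $\beta\ge C$. The compactness/semicontinuity step handles measures bounded away from $\delta_0$, but the near-$\delta_0$ regime does not close as written: Ruelle plus the local expansion give $h_\mu\lesssim\int x^\alpha\,d\mu\le\int x^\gamma\,d\mu$ and $\int_{[0,\delta]}(\varphi(0)-\varphi)\,d\mu\gtrsim\int_{[0,\delta]}x^\gamma\,d\mu$, but $\int_{[\delta,1]}(\varphi(0)-\varphi)\,d\mu$ can be \emph{negative} whenever $\varphi>\varphi(0)$ somewhere on $[\delta,1]$, and nothing rules that out under the hypotheses. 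You acknowledge this and propose a subaction to normalize $\varphi$; but the existence of such a bounded (measurable) subaction making $\varphi$ nonpositive on $(0,1]$ while preserving membership in $\Sspace$ with a negative leading coefficient is precisely the open question the paper raises at the end of \S\ref{ss:about the proof}, and the paper explicitly remarks that a positive answer would yield an alternative proof of Theorem~\ref{t:sarig}(2). So this route is not available. The paper instead proves the Main Theorem in \S\ref{s:proofs} by a block decomposition of ergodic sums in the induced system, using the bounded-variation estimate of Lemma~\ref{l:sarig class is loc holder} together with the compact-optimization hypothesis~\eqref{e:compact optimization} to show directly that $Z_\ell(\beta\varphi,\beta\varphi(0))<1$ for $\beta,\ell$ large, and then invokes the Bowen-type formula. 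That quantitative argument (which also delivers the robustness statement of Theorem~\ref{t:robust sarig}) is the main technical content that your sketch does not replace.
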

 The following result is a stronger version of Theorem \ref{t:sarig}(3) with $\gamma=\alpha$.
 \begin{theo}
\label{t:sarig alpha}
Let~$\alpha, f, \gamma$ and~$c$ be as in Theorem~\ref{t:sarig}.
If~$\varphi$ has a phase transition in temperature and $\gamma=\alpha$, then~$c<0$.
\end{theo}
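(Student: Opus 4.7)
I would argue by contradiction: suppose $\varphi$ has a phase transition in temperature with $\gamma = \alpha$ and $c = 0$. By Corollary~\ref{c:tpt} referenced in the excerpt, a phase transition implies $P(\beta \varphi) = \beta \varphi(0)$ for all sufficiently large $\beta$. To reach a contradiction, I plan to exhibit, for every $\beta > 0$, an $f$-invariant probability measure $\mu$ with $h_\mu + \beta \int \varphi \dd\mu > \beta \varphi(0)$, which rules out the existence of a phase transition.

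The hypothesis $c = 0$, $\gamma = \alpha$ gives, for every $\epsilon > 0$, some $\delta = \delta_\epsilon > 0$ with $|\varphi'(t)| \le \epsilon \alpha t^{\alpha - 1}$ on $(0, \delta)$, hence $|\varphi(x) - \varphi(0)| \le \epsilon x^\alpha$ on $[0, \delta]$. Next I would invoke the standard first-return inducing for~$f$: let $p \in (0, 1)$ be such that $f(p) = 1$ on the left branch, set $Y \= [p, 1]$, let $\tau \colon Y \to \N$ be the first return time, $R \= f^\tau$, and $Y_k \= \{\tau = k\}$. Classical distortion estimates for the Manneville--Pomeau family yield a constant $K > 0$ with
\begin{equation}
f^j(x) \le K \bigl(\alpha (k - j)\bigr)^{-1/\alpha} \qquad \text{for every } x \in Y_k \text{ and } 1 \le j \le k - 1.
\end{equation}
Combining with the pointwise bound on $\varphi - \varphi(0)$, splitting the sum over $j$ according to whether $f^j(x) < \delta$, and using $\sum_{j=1}^{k-1} 1/(k-j) \le \log k + O(1)$, I obtain for all $k$ sufficiently large (depending on $\epsilon$),
\begin{equation}
\label{e:ind-bd}
\sup_{x \in Y_k}\left| \sum_{j=0}^{k-1}(\varphi - \varphi(0))(f^j(x)) \right| \le \tfrac{\epsilon}{\alpha}\log k + C_\epsilon,
\end{equation}
with $C_\epsilon$ independent of~$k$.

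Given $\beta > 0$, I would set $\epsilon \= \alpha / (2\beta)$, choose $N > \exp(2\beta C_\epsilon)$, and let $\nu_N$ be the uniform Bernoulli measure on the full shift with alphabet $\{Y_1, \ldots, Y_N\}$, viewed as an $R$-invariant probability on~$Y$ through the Markov coding. Its Abramov lift $\mu_N$ is an $f$-invariant probability with $\int \tau \dd\nu_N = (N + 1)/2$, $h_{\mu_N} = 2 \log N / (N + 1)$, and by~\eqref{e:ind-bd},
\begin{equation}
\left|\int (\varphi(0) - \varphi) \dd\mu_N\right| \le \frac{2}{N+1}\left( C_\epsilon + \tfrac{\epsilon}{\alpha} \log N \right).
\end{equation}
The inequality $h_{\mu_N} > \beta \int (\varphi(0) - \varphi) \dd\mu_N$ reduces to $(1 - \beta \epsilon/\alpha) \log N > \beta C_\epsilon$, which by the choice of $\epsilon$ becomes $\log N > 2 \beta C_\epsilon$, satisfied by the choice of~$N$. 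Therefore $P(\beta \varphi) \ge h_{\mu_N} + \beta \int \varphi \dd\mu_N > \beta \varphi(0)$ for every $\beta > 0$, contradicting the existence of a phase transition.

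The main technical obstacle is the uniform orbit estimate $f^j(x) \le K(\alpha(k - j))^{-1/\alpha}$ for $x \in Y_k$, which requires bounded-distortion control along excursions near the indifferent fixed point; this should either be available as a preliminary of the paper or follow from the explicit polynomial form $f(y) = y + y^{\alpha+1}$ of the left branch combined with standard Koebe-type distortion. The rest is a direct Abramov-plus-variational-principle computation.
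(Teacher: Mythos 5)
Your proof is correct and arrives at the same intermediate goal, namely that $P(\beta\varphi)>\beta\varphi(0)$ for every $\beta>0$, but by a genuinely different route than the paper's. The paper works inside the inducing formalism of \S\ref{s:inducing}: writing $\varphi(x)=\varphi(0)+h(x)x^\alpha$ with $h(0)=0$ (Lemma~\ref{l:potential form}), it shows $\exp\bigl(\beta(S_n\varphi(x_n)-n\varphi(0))\bigr)\gtrsim 1/n$, so $(\sL_{\beta\varphi,\beta\varphi(0)}\one)(1)$ diverges like the harmonic series; Lemma~\ref{l:operator} and the Bowen-type formula then deliver the pressure gap. You instead bypass the transfer operator and the Bowen-type formula entirely and construct explicit competitors for the variational principle: the uniform Bernoulli measure on the first $N$ return symbols, lifted by Abramov's formula, balancing entropy $2\log N/(N+1)$ against an excursion cost of order $\tfrac{2}{N+1}(\tfrac{\epsilon}{\alpha}\log N+C_\epsilon)$ and closing with the choice $\epsilon=\alpha/(2\beta)$. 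Both arguments rest on the same two ingredients --- the orbit bound $f^j(x)\le x_{k-j}\lesssim(\alpha(k-j))^{-1/\alpha}$ on excursions (which needs only \eqref{eq:10} and the combinatorial fact that $f^j$ maps $I_k$ into $(x_{k-j+1},x_{k-j}]$, not Koebe-type distortion) and the $o(x^\alpha)$ estimate coming from $c=0$ --- so the computations are parallel, but yours only invokes the variational principle, not the two-variable pressure $\sP(\cdot,\cdot)$. Two small repairs are needed. First, the negation of $c<0$ is $c\ge 0$, so you must also dispose of $c>0$; the paper does this in one line via Theorem~\ref{t:sarig}(3), which already gives $c\le 0$, and you should cite it to reduce to the case $c=0$. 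Second, the orbit bound carries a factor $K^\alpha$ into your $\log k$ coefficient, so it should properly read $\epsilon K^\alpha/\alpha$; by \eqref{eq:10} one can take $K$ as close to $1$ as desired at the cost of enlarging $C_\epsilon$, so this is harmless, but it deserves a word.
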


 The following corollary is a direct consequence of Theorems~\ref{t:sarig} and~\ref{t:sarig alpha}.
 \begin{coro}
 \label{c:sarig}
 Let~$\alpha, f, \gamma$ and~$c$ be as in Theorem \ref{t:sarig}. The following hold.
 \begin{enumerate}
 \item[1.] If~$\gamma\le \alpha$ and~$c\neq 0$, then~$\varphi$ has a phase transition in temperature if and only if~$\delta_0$ is the unique maximizing measure for~$\varphi$.
 \item[2.] If~$\gamma = \alpha$, then~$\varphi$ has a phase transition in temperature if and only if~$c<0$ and~$\delta_0$ is the unique maximizing measure for~$\varphi$.
 \end{enumerate}
 \end{coro}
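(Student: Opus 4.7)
The corollary is advertised as a direct consequence of Theorems~\ref{t:sarig} and~\ref{t:sarig alpha}, so my plan is simply to do the case analysis that threads these statements together. I treat the two parts separately, proving each as a pair of implications.

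\textbf{Part 1.} Assume $\gamma \le \alpha$ and $c \neq 0$. For the forward direction, if $\varphi$ has a phase transition in temperature, then Theorem~\ref{t:sarig}(3) immediately yields that $\delta_0$ is the unique maximizing measure for $\varphi$. For the converse, assume $\delta_0$ is the unique maximizing measure. Since $c \neq 0$, we have either $c > 0$ or $c < 0$. The case $c > 0$ is excluded by Theorem~\ref{t:sarig}(1), which under the hypothesis $\gamma \le \alpha$ asserts that $\delta_0$ fails to be maximizing, contradicting our assumption. Hence $c < 0$, and Theorem~\ref{t:sarig}(2) produces the desired phase transition.

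\textbf{Part 2.} Assume $\gamma = \alpha$; in particular $\gamma \le \alpha$, so Part 1 applies whenever $c \neq 0$. For the forward direction, suppose $\varphi$ has a phase transition. Then Theorem~\ref{t:sarig}(3) gives $c \le 0$ and the uniqueness of $\delta_0$ as the maximizing measure, while Theorem~\ref{t:sarig alpha}, specific to the critical exponent $\gamma = \alpha$, upgrades $c \le 0$ to the strict inequality $c < 0$. Conversely, if $c < 0$ and $\delta_0$ is the unique maximizing measure, then the hypotheses of Theorem~\ref{t:sarig}(2) are satisfied and we obtain the phase transition.

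\textbf{Main obstacle.} There is no substantive obstacle at this level: the content of the corollary is entirely carried by the two preceding theorems, and assembling it only requires checking that the excluded sign cases ($c > 0$ in Part 1, and $c = 0$ in Part 2) are precisely the ones ruled out by Theorem~\ref{t:sarig}(1) and Theorem~\ref{t:sarig alpha}, respectively. The genuine difficulty lies upstream, in the proofs of those two results --- in particular in establishing the rigidity statement of Theorem~\ref{t:sarig alpha} that distinguishes $\gamma = \alpha$ as a critical balance between the flatness of the indifferent fixed point and the vanishing order of the potential.
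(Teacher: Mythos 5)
Your proof is correct and matches exactly the paper's intent: the paper states this corollary "is a direct consequence of Theorems~\ref{t:sarig} and~\ref{t:sarig alpha}" without writing out the case analysis, and your assembly of the implications is precisely the natural derivation being implicitly referred to.
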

  
Notice that every $\varphi$ as in Theorem \ref{t:sarig} is \textsc{Hölder} continuous with exponent $\min\{1,\gamma\}$. Also notice that under the hypotheses of Theorem \ref{t:sarig}, if $\gamma \le \alpha, c<0$ and  $\varphi < \varphi(0)$ on $(0,1]$, then $\varphi$ has a phase transition in temperature. In this case, we give a simpler proof that $\varphi$ has a phase transition in temperature; see Corollary \ref{c:phase transitions}(2). An example is the geometric potential (Proposition \ref{ex:geometric}).

We remark that Theorem \ref{t:sarig}(1) is false for $\gamma>\alpha$ as the following example shows. 
Put $\delta\= \gamma-\alpha$ and for every $a$ in $(0,+\infty)$ define the potential $\varphi_a(x) \= (\delta/2) x^\gamma (1-ax)$ for every $x$ in $[0,1]$. Now, consider the potential $h(x)\=-x^\delta$ for every $x$ in $[0,1]$,  and the coboundary~$h\circ f - h$. We have that $h\circ f (0) - h(0) =0$, and for every $x$ in $[0,1]$ close to~$0$,  we have $h\circ f (x) - h(x)$ is close to 
$-\delta x^\gamma$. Therefore, by taking $a$ sufficiently large, we can ensure that $\varphi_a + h\circ f - h$ is strictly negative on 
$(0,1]$, which implies that $\delta_0$ is the unique maximazing measure for $\varphi_a + h\circ f - h$ and hence also for 
$\varphi_a$.

Note that Theorem \ref{t:sarig}(2) does not hold for~$c=0$. Indeed, for~$\gamma'$ in~$(\gamma,+\infty)$ the potential~$\omega_{\gamma'}$ 
defined by~$\omega_{\gamma'}(x)\= -x^{\gamma'}$ satisfies 
\begin{equation}
\label{e:c=0}
\lim_{x\to 0^+} \frac{\omega_{\gamma'}'(x)}{\gamma x^{\gamma-1}} = 0,
\end{equation}
and it has a phase transition in temperature if and only if $\gamma'\le \alpha$, see Proposition \ref{p:omega transitions}. Thus, if $\gamma<\alpha<\gamma'$ then 
$\omega_{\gamma'}$ does not have a phase transition in temperature, and if $\gamma<\gamma'<\alpha$ then 
$\omega_{\gamma'}$ has a phase transition in temperature. In both cases, $\gamma<\alpha$ and $\delta_0$ is the unique maximizing measure for 
$\omega_{\gamma'}$. The same example shows that Theorem \ref{t:sarig alpha} does not hold for $\gamma \neq \alpha$. Since we can take 
$\gamma<\gamma'<\alpha$ and thus,
$\omega_{\gamma'}$ has a phase transition in temperature but $c=0$ by \eqref{e:c=0}.

In~\cite[Proposition 1(2)]{Sar01a}, it was stated that under the hypotheses of Theorem \ref{t:sarig}, if $\gamma\le \alpha$ and ${c< 0}$, then~$\varphi$ has a phase transition in temperature. However, the following example shows we can not omit the hypothesis that~$\delta_0$ is the unique maximizing measure for~$\varphi$ in Theorem~\ref{t:sarig}(2).
\begin{exam}
Consider the potential 
\begin{equation}
\hvarphi(x) \= -x^\alpha(1 - x).
\end{equation} 
It satisfies~\eqref{e:leading} with~$c = -1$ and~$\gamma = \alpha$. We have that~$\delta_0$ and~$\delta_1$ are maximizing measures for~$\hvarphi$. Thus, by Theorem~\ref{t:sarig}(3), the potential~$\hvarphi$ does not exhibit a phase transition in temperature. 

We now provide an alternative proof that does not rely on Theorem~\ref{t:sarig}. Let~$x_2$ be the smaller preimage of~$1$ under~$f^2$, and let~$X$ be the maximal invariant set of~$f$ in~$[x_2,1]$. Then~$f|_X$ is topologically conjugate to a topologically mixing subshift of finite type (the golden mean shift), via a conjugacy that is \textsc{Hölder} continuous.
By the theory of Thermodynamic Formalism for \textsc{Hölder} continuous potentials, it follows that for every~$\beta > 0$, the potential~$\beta \hvarphi|_X$ has a unique equilibrium state with positive entropy (see, for instance, \cite[Theorem 1.25]{Bow08}). Therefore,
\begin{equation}
P(\beta \hvarphi) \ge P(\beta \hvarphi|_X) > h_{\delta_1} + \int \beta \hvarphi|_X \dd\delta_1 = \beta \hvarphi(1) = \beta \hvarphi(0).
\end{equation}
By Corollary~\ref{c:tpt}(1) in~\S\ref{ss:about the proof}, we conclude that the potential~$\hvarphi$ does not exhibit a phase transition in temperature.
\end{exam}

\subsection{Robust phase transitions in temperature}
\label{ss:Robust phase transitions in temperature}

The following results aim to understand when a phase transition in temperature is robust. For this, we introduce a function space containing  the potentials introduced in Theorem~\ref{t:sarig}. Denote by $C(\R)$ for space of continuous functions on $[0,1]$ endowed with the uniform norm $\|{\cdot}\|$ and by 
$C_\dag^1(\R)$ the subspace of $C(\R)$ of functions with continuous derivative on $(0,1]$. 
For every $\gamma$ in $(0,+\infty)$ we define the space of function $\Sspace$ by
\begin{equation}
\label{d:Sspace}
\Sspace\= \left\{\varphi \in  C_\dag^1(\R) \colon \lim_{x\to 0^+} \frac{\varphi'(x)}{x^{\gamma-1}} \in \R \right\}.
\end{equation}
For every $\varphi$ in $\Sspace$ we call  
\begin{equation}\label{e:leading c}
\lim_{x\to 0^+} \frac{\varphi'(x)}{\gamma x^{\gamma-1}} 
\end{equation}
the \emph{leading coefficient of $\varphi$ in $\Sspace$}. The meaning of this number is explained in Lemma~\ref{l:potential form} in \S \ref{ss:Main Theorem}.
For every $\varphi$ in $C^{1,\gamma}(\R)$ put
\begin{equation}
\label{d:norm}
|\varphi|_{1,\gamma} \= \sup_{x\in (0,1]} \frac{|\varphi'(x)|}{\gamma x^{\gamma-1}} \text{ and } \|\varphi\|_{1,\gamma} \= \|\varphi\| + |\varphi|_{1,\gamma}.
\end{equation}
The space $\Sspace$ endowed with the norm $\|\cdot\|_{1,\gamma}$ is a \textsc{Banach} space. 

We say that a potential $\varphi$ in $\Sspace$ has a \emph{robust phase transition in temperature for~$f$ in~$\Sspace$}, if every  
potential sufficiently close to~$\varphi$ in~$\Sspace$ has a phase transition in temperature. In this case,~$\gamma\le \alpha$ and the leading coefficient
of~$\varphi$ in~$\Sspace$ is nonpositive by Theorem \ref{t:sarig}(3).
The following result characterizes potentials in $\Sspace$ with a robust phase transition in temperature. 
\begin{theo}
\label{t:robust sarig}
Let~$\alpha$  be in~$(0,+\infty)$ and let $f$ be the \textsc{Manneville–Pomeau} map of parameter~$\alpha$.
Let $\gamma$ be in $(0,\alpha]$ and let $\varphi$ be a potential in $\Sspace$.
The following are equivalent.
\begin{enumerate}
\item[1.] $\varphi$ has a robust phase transition in temperature for $f$ in $\Sspace$;
\item[2.] $\delta_0$ is, robustly in $\Sspace$, the unique maximizing measure for $\varphi$; 
\item[3.] $\varphi$ has a phase transition in temperature and its leading coefficient in~$\Sspace$ is strictly negative.
\end{enumerate}
\end{theo}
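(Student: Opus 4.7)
I will prove the cycle $(1)\Rightarrow(2)\Rightarrow(3)\Rightarrow(1)$. For $(1)\Rightarrow(2)$: since every $\tilde\varphi$ in a $\Sspace$-neighborhood of $\varphi$ has a phase transition, Theorem~\ref{t:sarig}(3) applied to each such $\tilde\varphi$ gives that $\delta_0$ is its unique maximizing measure, which is exactly~(2). For $(2)\Rightarrow(3)$: apply (2) at $\tilde\varphi=\varphi$ to see $\delta_0$ is uniquely maximizing for $\varphi$, hence $c\le 0$ by Theorem~\ref{t:sarig}(1); the case $c=0$ is excluded by perturbing to $\varphi+\epsilon x^\gamma$, which lies in $\Sspace$ with $\|\epsilon x^\gamma\|_{1,\gamma}=2\epsilon$ and has leading coefficient $\epsilon>0$, so $\delta_0$ fails to be maximizing by Theorem~\ref{t:sarig}(1), contradicting (2). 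With $\gamma\le\alpha$ (the standing hypothesis), $c<0$, and $\delta_0$ uniquely maximizing, Theorem~\ref{t:sarig}(2) yields the phase transition for $\varphi$, completing~(3).

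The main implication is $(3)\Rightarrow(1)$. Fix $\varphi$ with a phase transition and leading coefficient $c<0$; by Theorem~\ref{t:sarig}(3), $\delta_0$ is already uniquely maximizing for $\varphi$. For $\tilde\varphi\in\Sspace$ close to $\varphi$, the leading coefficient $\tilde c$ is close to $c$ (the map $\varphi\mapsto c$ is dominated by $|\cdot|_{1,\gamma}$), so $\tilde c<0$; by Theorem~\ref{t:sarig}(2) it then suffices to verify that $\delta_0$ remains uniquely maximizing for~$\tilde\varphi$. I introduce the auxiliary ratio
\begin{equation*}
R_\varphi(\mu) \= \frac{\int(\varphi-\varphi(0))\,d\mu}{\int x^\gamma\,d\mu}, \qquad \mu\in\sM\setminus\{\delta_0\},
\end{equation*}
for which unique maximality of $\delta_0$ for $\varphi$ is equivalent to $R_\varphi<0$ pointwise. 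Integrating the pointwise bound $|\tilde\varphi'(x)-\varphi'(x)|\le\|\tilde\varphi-\varphi\|_{1,\gamma}\,\gamma x^{\gamma-1}$ yields $|(\tilde\varphi-\tilde\varphi(0))-(\varphi-\varphi(0))|\le\|\tilde\varphi-\varphi\|_{1,\gamma}\,x^\gamma$ on $[0,1]$, and hence the uniform estimate $|R_{\tilde\varphi}(\mu)-R_\varphi(\mu)|\le \|\tilde\varphi-\varphi\|_{1,\gamma}$ for every $\mu\ne\delta_0$. It therefore suffices to prove the uniform gap $M \= \sup_{\mu\in\sM\setminus\{\delta_0\}} R_\varphi(\mu)<0$.

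To produce this gap, suppose for contradiction $\mu_n\in\sM\setminus\{\delta_0\}$ satisfy $R_\varphi(\mu_n)\to 0$. By weak-$*$ compactness of $\sM$, extract a subsequential limit $\mu_\infty\in\sM$. If $\mu_\infty\ne\delta_0$, then $\int x^\gamma d\mu_n\to\int x^\gamma d\mu_\infty>0$ so $R_\varphi$ is continuous at $\mu_\infty$ and $R_\varphi(\mu_n)\to R_\varphi(\mu_\infty)<0$, a contradiction. The remaining case $\mu_\infty=\delta_0$ is the dynamical heart of the argument. Write $\varphi-\varphi(0)=c\,x^\gamma+\eta$ with $\eta(x)=o(x^\gamma)$ as $x\to 0^+$, so that $R_\varphi(\mu_n)=c+\int\eta\,d\mu_n/\int x^\gamma d\mu_n$; I claim the second term tends to zero, so that $R_\varphi(\mu_n)\to c<0$, again contradicting $R_\varphi(\mu_n)\to 0$. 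Given $\delta>0$, pick $r>0$ so small that $|\eta|\le\delta x^\gamma$ on $[0,r]$; splitting gives $|\int\eta\,d\mu_n|\le\delta\int x^\gamma d\mu_n+\|\eta\|_\infty\mu_n((r,1])$, so the claim reduces to $\mu_n((r,1])/\int x^\gamma d\mu_n\to 0$. This last ratio is controlled via Kac's formula for the first-return map of $f$ to $(r,1]$, together with the \textsc{Manneville--Pomeau} asymptotics $f^k(x)\sim(\alpha k)^{-1/\alpha}$ along intermittent excursions, which under the threshold hypothesis $\gamma\le\alpha$ force $\sum_{k=0}^{\tau(x)-1}(f^k x)^\gamma\to\infty$ on orbits with long return times, and hence $\int_A S_\tau x^\gamma\,d\mu_{n,A}\to\infty$ whenever $\mu_n\to\delta_0$.

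The principal obstacle is this final dynamical estimate: it depends essentially on the geometry of the parabolic fixed point at $0$ and the precise threshold $\gamma\le\alpha$, and must rule out invariant measures whose mass concentrates near $\delta_0$ along short orbit segments in a way that would defeat the comparison between $\mu_n((r,1])$ and $\int x^\gamma d\mu_n$. Once the gap $M<0$ is in hand, the norm estimate $|R_{\tilde\varphi}-R_\varphi|\le\|\tilde\varphi-\varphi\|_{1,\gamma}$ gives $R_{\tilde\varphi}<0$ as soon as $\|\tilde\varphi-\varphi\|_{1,\gamma}<|M|$, so $\delta_0$ is uniquely maximizing for $\tilde\varphi$, and Theorem~\ref{t:sarig}(2) yields the phase transition for $\tilde\varphi$, completing~(1).
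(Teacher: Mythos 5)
Your implications $(1)\Rightarrow(2)$ and $(2)\Rightarrow(3)$ are correct and essentially identical to the paper's (your perturbation $\varphi+\epsilon x^\gamma$ is the paper's $\varphi-\varepsilon\omega_\gamma$). The genuinely new part of your argument is $(3)\Rightarrow(1)$ via the ratio $R_\varphi(\mu)$ and the claimed uniform gap $M<0$; the paper instead proves $(3)\Rightarrow(1)$ by invoking its Main Theorem, whose proof is a careful quantitative decomposition of orbit segments by return time, with explicit thresholds $n_0,m_0$. Your reduction of the robustness statement to the inequality $M<0$, and the Lipschitz estimate $|R_{\tilde\varphi}-R_\varphi|\le\|\tilde\varphi-\varphi\|_{1,\gamma}$, are clean and correct.

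The gap is precisely where you flag the ``principal obstacle,'' but the resolution you sketch is wrong. You claim that if $\mu_n\to\delta_0$ (weak-$*$) with $\mu_n\ne\delta_0$, then $\mu_n((r,1])/\int x^\gamma\,d\mu_n\to 0$, equivalently $\int_A S_\tau(x^\gamma)\,d\mu_{n,A}\to\infty$ by Kac's formula. This fails. Take $A=J_0$, let $\mu_{n,J_0}$ be the Bernoulli measure on the return coding with weights $p_1=1-\epsilon_n$, $p_{m_n}=\epsilon_n$, and choose $\epsilon_n=m_n^{-a}$ with $1-\gamma/\alpha<a<1$ (for $\gamma<\alpha$; an analogous choice works for $\gamma=\alpha$). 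Then $\int_{J_0}\tau\,d\mu_{n,J_0}\sim\epsilon_n m_n\to\infty$, so the normalized invariant measures $\mu_n$ converge to $\delta_0$, while $\int_{J_0}S_\tau(x^\gamma)\,d\mu_{n,J_0}\to\int_{I_1}x^\gamma\,d\nu_1$, a finite positive constant (the mass escaping to $I_{m_n}$ contributes $\epsilon_n\cdot O(m_n^{1-\gamma/\alpha})\to 0$). Hence $\mu_n(J_0)/\int x^\gamma\,d\mu_n$ stays bounded away from $0$. In fact for this sequence $R_\varphi(\mu_n)\to\bigl(\varphi(p_1)-\varphi(0)\bigr)/p_1^\gamma$, where $p_1$ is the fixed point of $f$ in $I_1$ — so the limit is negative if $\delta_0$ is uniquely maximizing, but it is not $c$ and your intermediate claim ``$R_\varphi(\mu_n)\to c$'' is false.

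What this shows is that establishing $M<0$ requires controlling the contribution of \emph{bounded} return times (mass shadowing compact invariant sets away from $0$) simultaneously with the contribution of long excursions, rather than reducing everything to the asymptotics at $0$. This is exactly what the paper's Main Theorem does: it splits each $F$-orbit block into indices with $m(F^j x)\ge m_0$ (where the $x^\gamma$-type lower bound on ergodic sums from the parabolic asymptotics applies) and indices with $m(F^j x)<m_0$ (where ergodic sums are compared to invariant measures supported on $[x_{m_0},1]$ via a shadowing/bounded-distortion argument, using the assumption \eqref{e:compact optimization}). Your ratio $R_\varphi$ may still be the right object, but proving $M<0$ in Case~2 needs an argument of this two-regime type, not the single Kac/asymptotics step you wrote.
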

Observe that for all~$\gamma \in (0,\alpha]$,~$\gamma' \in (\gamma,\alpha]$, and~$\varphi \in C_\dag^{1,\gamma'}(\R)$, the leading coefficient of~$\varphi$ in~$\Sspace$ is zero. By Theorem~\ref{t:robust sarig}, no phase transition in temperature of a potential in~$C_\dag^{1,\gamma'}(\R)$ is robust in~$\Sspace$.
When~$\gamma < \alpha$, not all nonrobust phase transitions in temperature in~$\Sspace$ are of this form. For example, the potential~$\tvarphi$ defined by
\[
\tvarphi(x) \= \frac{1}{-x + \log x} \, x^\gamma \quad \text{for } x \in (0,1], \quad \text{and} \quad \tvarphi(0) \= 0,
\]
belongs to~$\Sspace \setminus \bigcup_{\gamma' \in (\gamma,\alpha]} C_\dag^{1,\gamma'}(\R)$, has zero leading coefficient in~$\Sspace$, and exhibits a phase transition in temperature (see Corollary~\ref{c:phase transitions}(1)).
The situation is entirely different in the case~$\gamma = \alpha$, as shown in the following result, which is an immediate consequence of Theorems~\ref{t:sarig alpha} and~\ref{t:robust sarig}.
\begin{coro}
\label{c:robust vs nonrobust}
Let~$\alpha$  be in~$(0,+\infty)$ and let~$f$ be the \textsc{Manneville–Pomeau} map of parameter~$\alpha$.
For every $\varphi$ in $C_\dag^{1,\alpha}(\R)$ the following are equivalent.
\begin{enumerate}
\item[1.] $\varphi$ has a phase transition in temperature;
\item[2.] $\varphi$ has a robust phase transition in temperature for $f$ in $C_\dag^{1,\alpha}(\R)$.
\end{enumerate}   
 When these equivalent conditions hold, the leading coefficient of~$\varphi$
in~$C_\dag^{1,\alpha}(\R)$ is strictly negative.
\end{coro}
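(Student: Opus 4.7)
The corollary falls out by simply chaining Theorem~\ref{t:sarig alpha} with Theorem~\ref{t:robust sarig} in the special case $\gamma=\alpha$. The implication (2)$\Rightarrow$(1) is immediate from the definitions: a robust phase transition in temperature for~$f$ in $C_\dag^{1,\alpha}(\R)$ means every potential sufficiently close to~$\varphi$ in $C_\dag^{1,\alpha}(\R)$ has a phase transition, and $\varphi$ is arbitrarily close to itself, so $\varphi$ itself has a phase transition in temperature. No work is needed here.

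For (1)$\Rightarrow$(2), suppose $\varphi \in C_\dag^{1,\alpha}(\R)$ has a phase transition in temperature. Since $\gamma=\alpha$, I would apply Theorem~\ref{t:sarig alpha} to conclude that the leading coefficient~$c$ of $\varphi$ in $C_\dag^{1,\alpha}(\R)$ satisfies $c<0$. This gives precisely condition~(3) of Theorem~\ref{t:robust sarig} applied with $\gamma=\alpha$ (which is allowed since $\gamma \in (0,\alpha]$). Invoking the equivalence (3)$\Leftrightarrow$(1) in that theorem then yields that~$\varphi$ has a robust phase transition in temperature for~$f$ in $C_\dag^{1,\alpha}(\R)$, which is condition~(2) of the corollary.

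Finally, the concluding assertion that the leading coefficient of~$\varphi$ in $C_\dag^{1,\alpha}(\R)$ is strictly negative under the equivalent conditions is exactly Theorem~\ref{t:sarig alpha} applied to any~$\varphi$ satisfying (1).

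\textbf{Potential obstacle.} There is essentially no obstacle: everything is bookkeeping, and the only thing to verify is that the hypotheses of Theorems~\ref{t:sarig alpha} and~\ref{t:robust sarig} are compatible in the case $\gamma=\alpha$, which they are since $\alpha\in(0,\alpha]$ and $C_\dag^{1,\alpha}(\R)$ is the space $C_\dag^{1,\gamma}(\R)$ for $\gamma=\alpha$. The only mild care concerns notation: the "leading coefficient in $C_\dag^{1,\alpha}(\R)$" as defined in~\eqref{e:leading c} coincides with the constant~$c$ appearing in~\eqref{e:leading} of Theorem~\ref{t:sarig} (with $\gamma=\alpha$), so the hypothesis of Theorem~\ref{t:sarig alpha} applies verbatim to every $\varphi \in C_\dag^{1,\alpha}(\R)$.
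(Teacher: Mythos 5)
Your proposal is correct and is precisely the argument the paper intends when it calls the corollary ``an immediate consequence of Theorems~\ref{t:sarig alpha} and~\ref{t:robust sarig}'': (2)$\Rightarrow$(1) is definitional, (1)$\Rightarrow$(2) follows by using Theorem~\ref{t:sarig alpha} to get the negative leading coefficient and then invoking the equivalence (3)$\Leftrightarrow$(1) of Theorem~\ref{t:robust sarig} with $\gamma=\alpha$, and the final assertion is again Theorem~\ref{t:sarig alpha}. No gaps.
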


We conclude this section by discussing the phase diagram of potentials in~$\Sspace$, and the role of Theorem~\ref{t:robust sarig} and Corollary~\ref{c:robust vs nonrobust} in its description.
Let~$\alpha$  be in~$(0,+\infty)$ and let~$f$ be the \textsc{Manneville–Pomeau} map of parameter~$\alpha$.
Given~$\gamma \in (0,+\infty)$, we define the \emph{phase transition in temperature locus}~$\cP\cT_{T}(\gamma)$ as the set of potentials in~$\Sspace$ that exhibit a phase transition in temperature at~$\beta = 1$. By Theorem~\ref{t:sarig}, the set~$\cP\cT_{T}(\gamma)$ is empty for~$\gamma > \alpha$, and for~$\gamma \in (0,\alpha]$, by Corollary~\ref{c:tpt}, it has empty interior in~$\Sspace$.
We also define the \emph{robust phase transition in temperature locus}~$\cP\cT_{RT}(\gamma)$ as the subset of~$\cP\cT_{T}(\gamma)$ consisting of potentials whose phase transition in temperature is robust.
Motivated by the Gibbs phase rule from Statistical Mechanics (see, for instance,~\cite{Isr79}), one may ask about the regularity of the sets~$\cP\cT_{T}(\gamma)$ and~$\cP\cT_{RT}(\gamma)$ in~$\Sspace$. For example, by Theorem~\ref{t:robust sarig}, one may ask whether~$\cP\cT_{T}(\gamma)$ is a real-analytic manifold with boundary in~$\Sspace$, or whether~$\cP\cT_{RT}(\gamma)$ is a real-analytic manifold in~$\Sspace$.
By Corollary~\ref{c:robust vs nonrobust}, in the case~$\gamma = \alpha$, these questions reduce to: Is~$\cP\cT_{T}(\alpha)$ a real-analytic manifold in~$C^{1,\alpha}_\dag(\R)$?
In the companion paper~\cite{CorRiv25b}, we answer similar questions in a topological setting and for \textsc{Hölder} continuous potentials, but the questions above remain open.

\subsection{Phase transition in temperature for \textsc{Hölder} continuous potentials and the Key Lemma}
\label{ss:about the proof}

The main new feature in Theorems~\ref{t:sarig} and~\ref{t:robust sarig} is the role played by the invariant measure~$\delta_0$ as a maximizing measure for the potential. To detect that, for a \textsc{Hölder} continuous potential with a phase transition in temperature,~$\delta_0$ is the unique maximizing measure (Theorem~\ref{t:sarig}(3)), we rely on the following two results.

\begin{coro}
\label{c:tpt}
Let~$\alpha$ be in~$(0,+\infty)$ and let~$f$ be the \textsc{Manneville–Pomeau} map of parameter~$\alpha$.
For every \textsc{Hölder} continuous potential~$\varphi$, the following dichotomy holds:
\begin{enumerate}
\item[1.] For every $\beta \in (0,+\infty)$, we have $P(\beta\varphi) > \beta\varphi(0)$, and the function $\beta \mapsto P(\beta\varphi)$ is real-analytic on $(0,+\infty)$; or
\item[2.] There exists $\beta_0 > 0$ such that $P(\beta_0\varphi) = \beta_0\varphi(0)$.
Define
\begin{equation}
\beta_* \= \inf\left\{ \beta > 0 \colon P(\beta\varphi) \le \beta\varphi(0) \right\}.
\end{equation}
Then $\beta_* > 0$; for every $\beta \in (0,\beta_*)$, we have
$P(\beta\varphi) > \beta\varphi(0)$, and for every $\beta \ge \beta_*$, we have $P(\beta\varphi) = \beta\varphi(0)$. Furthermore, the function $\beta \mapsto P(\beta\varphi)$ is real-analytic on $(0,+\infty) \setminus \{ \beta_* \}$.
\end{enumerate}
\end{coro}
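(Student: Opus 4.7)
The plan is to split the dichotomy into an elementary convexity-and-maximizing-measure argument -- which shows the zero set of $g(\beta) \= P(\beta\varphi) - \beta\varphi(0)$ is either empty or of the form $[\beta_*, +\infty)$ with $\beta_* > 0$ -- and a separate appeal to the inducing framework for the \textsc{Manneville--Pomeau} map that delivers real-analyticity wherever $g > 0$.

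First I would record two elementary facts. Since $\delta_0 \in \sM$ has zero entropy and $\int \varphi \dd\delta_0 = \varphi(0)$, the variational principle gives $P(\beta\varphi) \ge \beta\varphi(0)$, so $g \ge 0$. And since $P(\beta\varphi) \to \htop(f) = \log 2 > 0$ as $\beta \to 0^+$ by the $\|\cdot\|$-Lipschitz continuity of the pressure in the potential, one has $g(\beta) > 0$ on some right-neighbourhood of $0$; in particular $\beta_* > 0$ whenever the zero set of $g$ is nonempty.

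The key algebraic step is: if $g(\beta_0) = 0$ for some $\beta_0 > 0$, then every $\mu \in \sM$ satisfies $\int \varphi \dd\mu \le \varphi(0)$. Otherwise, using $h_\mu \ge 0$, one would obtain $h_\mu + \beta_0 \int \varphi \dd\mu > \beta_0 \varphi(0)$, contradicting $P(\beta_0\varphi) = \beta_0 \varphi(0)$. In particular $\delta_0$ is a maximizing measure for $\varphi$. For any $\beta \ge \beta_0$ and $\mu \in \sM$, one combines the variational bound $h_\mu + \beta_0 \int \varphi \dd\mu \le \beta_0 \varphi(0)$ with the inequality $(\beta - \beta_0) \int \varphi \dd\mu \le (\beta - \beta_0) \varphi(0)$ to conclude $h_\mu + \beta \int \varphi \dd\mu \le \beta \varphi(0)$; taking the supremum over $\mu$ and using $g \ge 0$ forces $g(\beta) = 0$ for every $\beta \ge \beta_0$. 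The zero set of $g$ is therefore $[\beta_*, +\infty)$ with $\beta_* > 0$.

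It remains to prove real-analyticity of $\beta \mapsto P(\beta\varphi)$ on the open set $\{g > 0\}$. For this I would appeal to the standard inducing scheme: let $F \colon Y \to Y$ be the first-return map of $f$ to an interval $Y$ bounded away from $0$, which is a topologically mixing countable Markov map with bounded distortion, and let $\tau \colon Y \to \N$ be the return time. For a \textsc{Hölder} continuous $\varphi$, the induced Birkhoff sum $\Phi_\varphi \= \sum_{k=0}^{\tau-1} \varphi \circ f^k$ has locally summable variations on $Y$. By the \textsc{Sarig}--\textsc{Bowen} formula, on $\{g > 0\}$ the pressure $P(\beta\varphi)$ is characterized as the unique $t > \beta\varphi(0)$ solving $P_F(\beta \Phi_\varphi - t \tau) = 0$, and by \textsc{Sarig}'s thermodynamic formalism the map $(\beta,t) \mapsto P_F(\beta \Phi_\varphi - t\tau)$ is jointly real-analytic on the strongly positive recurrent regime, with partial derivative in $t$ equal to $-\int \tau \dd \hat\nu_{\beta, t} < 0$. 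The implicit function theorem then transfers real-analyticity to $\beta \mapsto P(\beta\varphi)$ throughout $\{g > 0\}$; on $[\beta_*, +\infty)$ the pressure coincides with the linear function $\beta \mapsto \beta\varphi(0)$ and is trivially real-analytic.

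The main obstacle is the verification that the induced potential $\beta \Phi_\varphi - t\tau$ fits into \textsc{Sarig}'s analytic framework (summable variations, strong positive recurrence, joint analyticity of the induced pressure) on all of the relevant parameter region. Once this MP-specific technical input is in hand -- presumably the content of the Key Lemma advertised in the section title -- the Corollary follows by combining it with the elementary maximizing-measure argument above.
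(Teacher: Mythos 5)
Your overall architecture matches the paper's: an elementary variational argument establishes that once $P(\beta_0\varphi)=\beta_0\varphi(0)$ the identity persists for all $\beta\ge\beta_0$, and that $\beta_*>0$ follows from $P(0)=\log 2>0$ together with continuity of the pressure in the potential; a separate analyticity input then handles the set where $P(\beta\varphi)>\beta\varphi(0)$. Your convexity step is essentially the paper's, packaged through maximizing measures instead of the cleaner one-line observation that $\tfrac{1}{\beta}P(\beta\varphi)\le\tfrac{1}{\beta_0}P(\beta_0\varphi)$ for $\beta\ge\beta_0$ (which holds because $P(\beta\varphi)/\beta=\sup_\mu\{h_\mu/\beta+\int\varphi\dd\mu\}$ is nonincreasing in $\beta$), but both derivations are correct and equivalent.

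Where you depart is the analyticity input, and there you have a real gap. The paper does not use the inducing scheme or \textsc{Sarig}'s countable-Markov-shift formalism for this step: it cites a result from~\cite{InoRiv23}, which in turn relies on \textsc{Keller}'s Spectral Gap Theorem~\cite{Kel85} for the transfer operator of a piecewise monotonic map acting on a generalized-bounded-variation function space directly on~$[0,1]$. That result says precisely that if $P(\varphi)>\varphi(0)$ then $\beta\mapsto P(\beta\varphi)$ is real-analytic at~$1$, which applied to $\beta_1\varphi$ gives analyticity wherever $P(\beta\varphi)>\beta\varphi(0)$. Your proposal to get analyticity via the Sarig--Bowen formula and the implicit function theorem on $P_F(\beta\Phi_\varphi-t\tau)$ is a recognizable alternative route, but as written it is only a sketch: you do not verify strong positive recurrence on the relevant range, joint analyticity of the induced pressure, or nondegeneracy of the $\partial_t$-derivative, and these MP-specific checks are exactly what would need to be done to make the argument rigorous. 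More importantly, you misattribute the missing technical input to the Key Lemma. The Key Lemma is the unrelated statement that $P(\varphi)>\int\varphi\dd\mu$ for every $\mu\in\sM$ distinct from $\delta_0$; in the paper it is used to show that a phase transition forces $\delta_0$ to be the unique maximizing measure (Theorem~\ref{t:sarig}(3)), not to prove real-analyticity of the pressure function. The analyticity input for Corollary~\ref{c:tpt} comes from an external citation, not from the Key Lemma.
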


\begin{generic}[Key Lemma] 
Let~$\alpha$  be in~$(0,+\infty)$ and let~$f$ be the \textsc{Manneville–Pomeau} map of parameter~$\alpha$.
For each \textsc{Hölder} continuous potential $\varphi$ and each  
$\mu$ in~$\sM$ distinct from~$\delta_0$, we have
\begin{equation}
\Pf(\varphi) > \int \varphi \dd \mu.
\end{equation}
\end{generic}

The proof of Corollary~\ref{c:tpt} is based on the following result from~\cite{InoRiv23}, which in turn relies on the Spectral Gap Theorem proved by \textsc{Keller} in~\cite{Kel85}: For every \textsc{Hölder} continuous potential~$\varphi$ if $P(\varphi) > \varphi(0)$, then the function $t \in \R \mapsto P(\varphi + t\varphi)$ is real-analytic at~$0$.
Now, observe that if there is $\beta_0$ in $(0,+\infty)$ such that $P(\beta_0\varphi)=\beta_0\varphi(0)$ then for every $\beta\ge \beta_0$ we have 
$P(\beta\varphi)=\beta\varphi(0)$. Indeed, by definition of the pressure for every $\beta\ge \beta_0$, we have that
\begin{equation}
\frac{1}{\beta}P(\beta\varphi) \le \frac{1}{\beta_0}P(\beta_0\varphi), 
\end{equation}
and then,
\begin{equation}
\beta\varphi(0) \le P(\beta\varphi) \le \frac{\beta}{\beta_0}P(\beta_0\varphi) = \beta\varphi(0).
\end{equation}
Since the pressure~$P(\varphi)$ is continuous in the potential and~$P(0)=\log 2 > 0$ (see Lemma~\ref{l:extension}(4) below), we obtain Corollary \ref{c:tpt}.


In \cite[Theorem C]{BomFer23}, as in Corollary~\ref{c:tpt}, it was shown that for \textsc{Manneville–Pomeau}-like maps of the circle, \textsc{Hölder} continuous potentials exhibit at most one phase transition in temperature.

The Key Lemma was first proved for rational maps in \cite{InoRiv12} and then for multimodal maps in \cite{Li15}. Since no one of these proofs applied directly to the setting of this article
we have included a detailed and simplified proof of the Key Lemma for \textsc{Manneville–Pomeau} maps in Appendix \ref{s:key lemma}.

\subsection{Notes and references}

Although not very detailed, the first proof that the geometric potential for the~\textsc{Manneville–Pomeau} map has a phase transition in temperature at~$1$ appeared in~\cite{PreSla92}. See also~\cite[Theorem 4.3]{CliTho13} for a proof when~$\alpha$ is in $(0,1)$. We take the opportunity to give a simple proof of this fact in~\S\ref{ss:Applications}. 

Let~$\alpha$  be in~$(0,+\infty)$ and let~$f$ be the \textsc{Manneville–Pomeau} map of parameter~$\alpha$.
From Corollary~\ref{c:tpt} and Proposition~\ref{p:omega transitions} in \S\ref{ss:prem results for the proofs}, we deduce that for a \textsc{Hölder} continuous potential~$\varphi$ satisfying $\varphi(0) = 0$, if there exists~$C > 0$ such that~$\varphi \leq C\omega_\alpha$ on~$[0,1]$, then~$\varphi$ undergoes a phase transition in temperature. This naturally raises the question of whether every \textsc{Hölder} continuous potential~$\varphi$ with $\varphi(0) = 0$ that undergoes a phase transition in temperature necessarily satisfies this condition. As the following example shows, the answer is negative.
Let~$x_1$ denote the discontinuity point of the map~$f$. Consider the potential~$\psi$ defined by~$\psi(x) := -x^\alpha (x - x_1)^2$. By Theorem~\ref{t:sarig}(2), this potential exhibits a phase transition in temperature, yet there does not exist~$C > 0$ such that~$\psi \leq C\omega_\alpha$ on~$[0,1]$.
However, one may still ask whether there are $C > 0$ and a bounded measurable function $h$ such that
$$\varphi \le C\omega_\alpha + h\circ f - h \text{ on } [0,1].$$
In this case, the occurrence of a phase transition in temperature would be equivalent to the latter property.

A naturally related question, for every $\gamma \in (0,\alpha]$, is the following: Let~$\varphi \in \Sspace$ be a potential satisfying $\varphi(0) = 0$, with a negative leading coefficient in~$\Sspace$, and for which~$\delta_0$ is the unique maximizing measure. Is~$\varphi$ cohomologous, via a bounded measurable function, to a potential~$\phi \in \Sspace$ that is negative on~$(0,1]$, satisfies~$\phi(0) = 0$, and has a negative leading coefficient in~$\Sspace$?
A positive answer to this question would provide an alternative proof of Theorem~\ref{t:sarig}(2); see Corollary~\ref{c:phase transitions}(2).

\subsection{About the proof of the theorems and the organization}
\label{ss:organization}

In \S\ref{ss:Main Theorem}, we state the Main Theorem and use it to prove Theorems \ref{t:sarig}, \ref{t:sarig alpha}, and \ref{t:robust sarig} in \S\ref{ss:proofs}. The proofs of these theorems also rely on several auxiliary results. In \S\ref{ss:Main Theorem}, we state a form of bounded variation of ergodic sums (Lemma~\ref{l:sarig class is loc holder}). In \S\ref{ss:prem results for the proofs}, we introduce an inducing scheme, and we state the relation between the pressure of the induced system and that of the original system, formulated as the Bowen-type formula in \S\ref{ss:prem results for the proofs}. From the Bowen-type formula, we also deduce Proposition~\ref{p:omega transitions} and Corollary \ref{c:no phase transition}. The proofs of almost all these additional results are given in the subsequent sections. 

The most challenging aspects of the proofs of Theorem~\ref{t:sarig} and Theorem~\ref{t:robust sarig} are Theorem~\ref{t:sarig}(2) and the implication~$3\Rightarrow 1$ in Theorem~\ref{t:robust sarig}. Both implications follow from the Main Theorem and Remark~\ref{r:n0} in \S\ref{ss:Main Theorem}. The Main Theorem is the most technical part of the article. Roughly speaking, it shows that if $\varphi$ is a potential in $\Sspace$ with a negative leading coefficient in $\Sspace$, then $\varphi$ exhibits a phase transition in temperature if and only if $\delta_0$ is the unique maximizing measure for $\varphi$. Moreover, when these equivalent conditions hold, they define an open subset of $\Sspace$.

To prove that $\delta_0$ is the unique maximizing measure for~$\varphi$ in \mbox{Theorem~\ref{t:sarig}(3)}, we use Corollary~\ref{c:tpt} and the Key Lemma.
Indeed, if~$\delta_0$ is not the unique maximizing measure for~$\varphi$, then the same holds for every~$\beta > 0$ for the potential~$\beta\varphi$.
Then, by the Key Lemma, for every~$\beta > 0$, we have~$P(\beta\varphi) > \beta\varphi(0)$, and thus, by Corollary~\ref{c:tpt}, the system does not exhibit a phase transition in temperature.

In \S\ref{ss:indifferent-branch}, we provide a lemma concerning the geometry near the indifferent fixed point at~$0$ (Lemma~\ref{l:indifferent-branch'}), as well as a bounded distortion result (Lemma~\ref{l:geometric-distortion}). These results are known to specialists, but we include detailed proofs for the reader's convenience. In \S\ref{ss:basic lemmas}, we prove Lemmas~\ref{l:sarig class is loc holder} and~\ref{l:potential form}. The latter describes the form of potentials in $\Sspace$ near $0$, and clarifies the meaning of the leading coefficient introduced in \S\ref{ss:Robust phase transitions in temperature}. The maps in the \textsc{Manneville–Pomeau} family are discontinuous. We consider a continuous extension to apply the thermodynamic formalism for continuous maps on compact metric spaces. This is a standard construction for discontinuous expanding maps on compact intervals, known as the doubling construction. 
In \S\ref{ss:extension}, we provide a detailed proof of the doubling construction for \textsc{Manneville–Pomeau} maps.
We use this construction to derive formulas for the topological pressure in Lemma~\ref{l:top pressure} in \S\ref{ss:pressure formulas}.  

In \S\ref{s:inducing}, we prove the Bowen-type formula. This is a key ingredient in proving all the theorems, particularly Theorem~\ref{t:sarig alpha}. 

In \S\ref{s:proofs}, we prove the Main Theorem, and finally, in Appendix~\ref{s:key lemma}, we prove the Key Lemma.

\section{Main Theorem and proof of Theorems \ref{t:sarig},  \ref{t:sarig alpha} and \ref{t:robust sarig}}
\label{s:main}

In this section, we state our principal technical result, the Main Theorem,  and prove Theorems \ref{t:sarig},  \ref{t:sarig alpha} and \ref{t:robust sarig}. 
Before writting the Main Theorem, we need some results that will be proved in \S\ref{s:preliminaries}. The proof of the Main Theorem is in \S\ref{s:proofs}. For the proof of Theorems \ref{t:sarig},  \ref{t:sarig alpha} and \ref{t:robust sarig}, we also need some additional results. The main one is the Bowen-type formula in \S\ref{ss:prem results for the proofs}, from which we deduce Proposition \ref{p:omega transitions} and Corollary \ref{c:no phase transition}. The proof of these additional results will be given in 
\S\ref{s:inducing}.

We use~$\N$ to denote the set of integers greater than or equal to~$1$ and~$\N_0 \= \N \cup \{ 0 \}$.
Fix~$\alpha$ in~$(0, +\infty)$ throughout the rest of this paper.

\subsection{Main Theorem}
\label{ss:Main Theorem}

Before stating the Main Theorem, we need some results about the geometry around the indifferent fixed point and the ergodic sums. 

Denote by~$x_1$ the unique discontinuity of~$f$ .
Then ${f(x_1) = 1}$ and the map ${f \colon [0, x_1] \to [0, 1]}$ is a diffeomorphism.
Put ${x_0 \= 1}$ and for each integer~$j$ satisfying ${j \ge 2}$, put~$x_j \= f|_{[0, x_1]}^{-(j - 1)}(x_1)$.
Also put $J_0\=(x_1,1].$

The following limit is known to the specialists. However, we prefer to state and prove it as part of 
Lemma~\ref{l:indifferent-branch'} in~\S\ref{s:preliminaries} for the reader's convenience.
  We have
  \begin{equation}
    \label{eq:10}
    \lim_{n \to +\infty} n \cdot x_n^{\alpha}
    =
    \frac{1}{\alpha}.
  \end{equation}

The next lemma is about the control up to an additive constant of the ergodic sums. This is a classical technical requirement in the theory of Thermodynamic Formalism that is satisfied, for example, by the geometric potential.
\begin{defi}
\label{d:bd potentials}
We say that a continuous potential $\varphi:[0,1]\to \R$ has \emph{bounded variation ergodic sums on $J_0$ for $f$}, if there is a constant $C>0$ such that for every $n$ in $\N$, every connected component $J$ of $f^{-n}(J_0)$ and all $x$ and $y$ in $J$ the following inequality holds:
\begin{equation}
|S_n\varphi(x)-S_n\varphi(y)|\le C.
\end{equation}
\end{defi}

\begin{lemm}\label{l:sarig class is loc holder}
Let $\gamma$ be a positive real number. There is a positive constant $D$ such that for every potential $\varphi$ in $\Sspace$,  
for every $n$ in $\N$, for every connected component $J$ of $f^{-n}(J_0)$, and all $x$ and $y$ in $J$ the following inequality holds:
\begin{equation}
|S_n\varphi(x)-S_n\varphi(y)|\le D |\varphi|_{1,\gamma}.
\end{equation}
In particular, $\varphi$ has bounded variation ergodic sums on $J_0$ for $f$ with constant $D |\varphi |_{1,\gamma}$.
\end{lemm}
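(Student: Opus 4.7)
The plan is to reduce the desired inequality to a geometric sum over a Markov-type decomposition of the orbit. From $|\varphi'(z)| \le |\varphi|_{1,\gamma}\,\gamma z^{\gamma-1}$ and integration, one obtains the H\"older-type inequality $|\varphi(a) - \varphi(b)| \le |\varphi|_{1,\gamma}\,|a^\gamma - b^\gamma|$ for all $a, b \in [0,1]$. Given a component $J$ of $f^{-n}(J_0)$ and $x, y \in J$, let $W_k = [a_k, b_k]$ denote the connected component of $f^{-(n-k)}(J_0)$ containing both $f^k(x)$ and $f^k(y)$; then $W_0 = J$, $W_n = J_0$, and $f|_{W_k}\colon W_k \to W_{k+1}$ is a diffeomorphism. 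The triangle inequality reduces the lemma to showing that $\sum_{k=0}^{n-1}(b_k^\gamma - a_k^\gamma)$ is bounded by a constant depending only on $\alpha$ and $\gamma$.

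Each $W_k$ lies entirely in one of the two branches of $f$, either $[0, x_1]$ or $J_0 = (x_1, 1]$. Let $r_1 < \cdots < r_M = n$ be the ``return'' indices with $W_{r_i} \subset J_0$; the runs of indices in between form ``excursions'' along the first branch $f_0 = f|_{[0,x_1]}$. For an excursion of length $m_i$ preceding $r_{i+1}$, Lemma~\ref{l:indifferent-branch'} places $W_{r_{i+1} - l} \subset (x_{l+1}, x_l]$ for $l = 1, \ldots, m_i$, and bounded distortion of $f_0^l$ on $(x_{l+1}, x_l]$ (Lemma~\ref{l:geometric-distortion}) combined with~\eqref{eq:10} gives $|W_{r_{i+1} - l}| \lesssim |W_{r_{i+1}}|/l^{1 + 1/\alpha}$. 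Applying the mean value theorem to $z \mapsto z^\gamma$ yields $b^\gamma - a^\gamma \le \gamma(\sup_{[a,b]} z^{\gamma-1})(b - a)$, and since $x_l \asymp x_{l+1}$ the supremum on $W_{r_{i+1} - l}$ is $\asymp x_l^{\gamma - 1} \asymp l^{(1 - \gamma)/\alpha}$, whether $\gamma \ge 1$ (where the sup is at $b$) or $\gamma < 1$ (where the sup is at $a$). Hence $b_{r_{i+1} - l}^\gamma - a_{r_{i+1} - l}^\gamma \lesssim |W_{r_{i+1}}|/l^{1 + \gamma/\alpha}$, which sums in $l$ to an $O(|W_{r_{i+1}}|)$ contribution per excursion. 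The return step itself contributes $b_{r_i}^\gamma - a_{r_i}^\gamma \lesssim |W_{r_i}|$, since $W_{r_i} \subset (x_1, 1]$ is bounded away from $0$. Finally, the first-return map $F\colon J_0 \to J_0$ has derivative at least $\mu \coloneqq 1 + (\alpha + 1) x_1^\alpha > 1$, because each return branch applies the second branch of $f$ precisely at the return step while the preceding first-branch steps contribute a factor at least $1$; thus $|W_{r_i}| \le \mu^{-(M - i)}|J_0|$, and
\[
\sum_{k=0}^{n-1}(b_k^\gamma - a_k^\gamma) \lesssim \sum_{i=1}^M |W_{r_i}| \le \frac{|J_0|}{1 - \mu^{-1}}.
\]

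The main obstacle is obtaining the per-excursion bound $O(|W_{r_{i+1}}|)$. The naive interval estimate $b_{r_{i+1} - l}^\gamma - a_{r_{i+1} - l}^\gamma \le x_l^\gamma - x_{l+1}^\gamma$ telescopes to a constant per excursion that does not shrink with $|W_{r_{i+1}}|$, so the total would grow linearly in the number of returns. On the other hand, using $(b - a)^{\min(1,\gamma)}$ directly fails to be summable in $l$ when $\gamma(1 + 1/\alpha) < 1$. Matching the mean value theorem with the bounded distortion on the exact cylinder $(x_{l+1}, x_l]$ is what simultaneously extracts the factor $|W_{r_{i+1}}|$ (needed for geometric decay across returns) and the exponent $1 + \gamma/\alpha > 1$ (needed for summability inside the excursion), which then dovetails to close the argument.
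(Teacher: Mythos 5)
Your proof is correct, but it takes a genuinely different route from the paper. After the same reduction (integrating $|\varphi'(z)|\le|\varphi|_{1,\gamma}\,\gamma z^{\gamma-1}$ and using the triangle inequality, so that it suffices to bound $\sum_{k}|f^k(x)^\gamma-f^k(y)^\gamma|$), you decompose the orbit of the cylinder $J$ into excursions through the first branch and return steps, prove a per-excursion bound of order $|W_{r_{i+1}}|$ by combining the mean-value theorem with the $l^{-(1+1/\alpha)}$ contraction along a single excursion, and then sum over excursions using the uniform expansion $DF\ge\mu>1$ of the first-return map. The paper avoids this two-scale structure entirely: Lemma~\ref{l:geometric-distortion}\eqref{eq:23} already gives a \emph{global} diameter estimate $|f^j(J)|\lesssim(1+n-j)^{-(1/\alpha+1)}$ that is uniform over all cylinders of a given remaining depth $n-j$, regardless of how often they visit $J_0$; pairing this with $f^j(J)\subseteq(x_{n-j+1},1]$ and the decay $x_m\asymp m^{-1/\alpha}$ from~\eqref{eq:10} produces the summable bound $(1+n-j)^{-(1+\gamma_0/\alpha)}$ directly, so one simply sums over all time steps $j=0,\dots,n-1$ without separating returns from excursions. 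Both arguments use the same analytic ingredients (mean-value theorem for $z\mapsto z^\gamma$, expansion estimates from Lemma~\ref{l:indifferent-branch'}, and~\eqref{eq:10}); the paper's version is shorter because the work you do by hand with the first-return map has already been absorbed into the distortion lemma, whereas your version makes the geometric contraction across returns explicit, which some readers may find more transparent.
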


Now, we present our main technical result. It gives a quantitative version of Theorem \ref{t:sarig}, which helps study robust phase transitions in temperature.

\begin{generic}[Main Theorem]
\label{t:q optimization}
Let $\gamma$ be in $(0,\alpha]$, put $\theta \= 1-\gamma/\alpha$, and let $D$ be the constant in Lemma~\ref{l:sarig class is loc holder} for 
$\gamma$.
Let $n_0$ in $\N$ such that
for every integer $n\ge n_0$ we have
\begin{equation}
\label{e:xn lower bound 0}
\frac{1}{2\alpha^\frac{\gamma}{\alpha}}n^{-\frac{\gamma}{\alpha}} < x_n^\gamma.
\end{equation}
Let $\varphi$ be in $\Sspace$ and let $c$ be in $(-\infty,0)$ such that for every $x$ in $[0,x_{n_0}]$ we have
\begin{equation}
\label{e:sup in compact}
\varphi(x)-\varphi(0) < c x^{\gamma},
\end{equation}  
and let $m_0$ be the least integer satisfying
\begin{equation}
\label{e:m0 robust}
m_0 
\\
>
\begin{cases}
\left[2(n_0+1)^\theta + \frac{4\alpha^\frac{\gamma}{\alpha}\theta}{-c}\left(D|\varphi|_{1,\gamma}+2 n_0 \|\varphi\| \right)\right]^{\frac{1}{\theta}} , & \text{ if  } \gamma < \alpha;\\
(n_0+1)^2 \exp\left(\frac{4\alpha}{-c} \left(D|\varphi|_{1,\gamma}+2 n_0 \|\varphi\| \right) \right), & \text{ if  } \gamma = \alpha.
\end{cases}
\end{equation}
Then, the following are equivalent:
\begin{enumerate}
\item[1.] $\varphi$ has a phase transition in temperature in $\Sspace$;
\item[2.] $\delta_0$ is the unique maximizing measure for $\varphi$;
\item[3.] \begin{equation}
\label{e:compact optimization}
\sup \left\{ \int \varphi \dd \nu \colon \nu \in \sM, \supp(\nu) \subseteq [x_{m_0}, 1] \right\} < \varphi(0).
\end{equation}
\end{enumerate}
Moreover, conditions \eqref{e:sup in compact}, \eqref{e:m0 robust}, and \eqref{e:compact optimization} are open in $\Sspace$. When the equivalent conditions 1,2 and 3 hold, we have that $\varphi$ has a robust phase transition in temperature in $\Sspace$ and that $\delta_0$ is, robustly in $\Sspace$, the unique maximizing measure for $\varphi$. 
\end{generic}

\begin{rema}
\label{r:n0}
By \eqref{eq:10} and Lemma \ref{l:potential form} stated in \S\ref{ss:proofs}, if $\varphi$ is a potential in $\Sspace$ with negative leading coefficient 
in~$\Sspace$, then there is~$n_0$ in~$\N$ satisfying Main Theorem \eqref{e:xn lower bound 0} and \eqref{e:sup in compact} with~$c$ equal to 2 times the 
leading coefficient of~$\varphi$.
\end{rema}

\subsection{Preliminary results for the proof of Theorems \ref{t:sarig},  \ref{t:sarig alpha} and \ref{t:robust sarig}}
\label{ss:prem results for the proofs}
This section introduces the inducing scheme and the two-variable pressure function. The main technical result is the Bowen-type formula, from which  we obtain two other results, 
Proposition~\ref{p:omega transitions} and Corollary \ref{c:no phase transition}, that we use in the proof of Theorem \ref{t:sarig} and \ref{t:robust sarig}.
The Bowen-type formula, together with Lemma \ref{l:operator}, is used directly in the proof of Theorem \ref{t:sarig alpha}.

\subsubsection{Induced map and the two-variables pressure}
\label{sss: induced map} 

We introduce the inducing scheme and the two-variable pressure functions, which are the primary tools for proving the existence of phase transitions in temperature.

For every~$n\in \N$ let~$y_n$ be the unique point in~$(x_1,1]$ such that~$\mpf(y_n)=x_{n-1}$, and
put~$I_n\=(y_{n+1},y_{n}]$ and~$J_n\=(x_{n+1},x_n]$. 
We have that for every $n$ in $\N$, the map  $\mpf^n$ sends  $J_n$ and $I_n$ diffeomorphically onto $(x_1,1]$.
Define $m:(0,1]\to \N$ by 
$$
m^{-1}(n) \= I_n \cup J_n,
$$ 
$F:J_0\to J_0$ by 
$$
F(x)\=\mpf^{m(x)}(x),
$$
and 
$L:(0,x_1]\to J_0$ by 
$$
L(x)\=\mpf^{m(x)}(x).
$$
The maps $F$  and $L$ are called \emph{the first return map}  and \emph{the first landing map} of $\mpf$ onto $J_0$, respectively.  

Let $\fD$ be the partition of $J_0$ given by the intervals $I_n$ with $n$ in $\N$. 
For every continuous potential $\varphi: [0,1]\to \R$, every $p$ in $\R$, and every $\ell$ in $\N$, we define
\begin{equation}
\label{d:Zell}
\begin{split}
& Z_\ell (\varphi,p)
\\
 \=  &
\sum_{J\in \bigvee_{j=0}^{\ell-1} F^{-j}(\fD)} 
\exp\left( \sup_{x\in J} \left( S_{m(x)+\cdots+m(F^{\ell-1}(x))} \varphi(x) -(m(x)+\cdots+m(F^{\ell-1}(x))p\right)\right).
\end{split}
\end{equation}
The sequence $ (Z_\ell (\varphi,p) )_{\ell \in \N}$ is in $(0,+\infty]$ and it is submultiplicative.
Thus
\begin{equation}\label{e:Zell}
\left(  \log Z_\ell (\varphi,p) \right)_{\ell \in \N}
\end{equation}
is a subadditive sequence in $\R\cup \{+\infty\}$.  Here, we use the convention that $\log(+\infty) = +\infty$. When the limit 
\begin{equation}\label{e:P defi}
\lim_{\ell\to +\infty} \frac{1}{\ell} \log Z_\ell (\varphi,p) 
\end{equation}
exists in the extended real line $\R\cup\{-\infty,+\infty\}$, we denote it by $\sPf(\varphi,p)$ and call it 
\emph{the two-variable pressure function for the potential $\varphi$ with parameter $p$}. 
It is 
is exactly the pressure for the potential defined on $J_0$ by
\begin{equation}
S_{m(x)+\cdots+m(F^{\ell-1}(x))} \varphi(x) -(m(x)+\cdots+m(F^{\ell-1}(x))p
\end{equation}
of the induced system $(J_0,F)$ viewed as a full shift on countable many symbols \cite{UrbRoyMun22b}.

When the sequence~$\left(  \log Z_\ell (\varphi,p) \right)_{\ell \in \N}$ is finite, by the Subadditive Lemma, the limit~\eqref{e:P defi} exists and is in~$\R\cup \{-\infty\}$. In particular,~$Z_1(\varphi,p)< +\infty$ implies~$\sP(\varphi,p)<+\infty$. From Lemma~\ref{l:operator} below, we have that for potentials~$\varphi$ with bounded variation ergodic sums\\~$\sP(\varphi,p)<+\infty$ implies~$Z_1(\varphi,p)<+\infty$.

\subsubsection{Bowen-type formula}
\label{sss:Bowen}
In this section, we state the Bowen-type formula relating the pressure of the original system to the two-variable pressure of the induced system for potentials whose ergodic sums have bounded variation.

\begin{generic}[Bowen-type formula]
\label{p:Bowen type formula}
Let~$\varphi:[0,1]\to \R$ be a continuous potential with bounded bounded variation ergodic sums on~$J_0$ for~$f$. For each~$p$ in~$\R$, we have
\begin{displaymath}
  \sPf(\varphi, p)
  \begin{cases}
    > 0
    & \text{if } p < \Pf(\varphi);
    \\
    \le 0
    & \text{if } p = \Pf(\varphi);
    \\
    < 0
    & \text{if } p > \Pf(\varphi),
  \end{cases}
\end{displaymath}
and 
\begin{equation}\label{e:Bowen formula}
\Pf(\varphi)
=
\inf \left\{ p  \in \R \colon \sPf(\varphi, p) \le 0 \right\}. 
\end{equation}
\end{generic}
Now, we introduce a transfer operator for the induced system, which is helpful to compute the two-variable pressure.

Denote by~$L^+$ the set of functions on~$J_0$ taking values in~$[0,+\infty]$.
For every function $\varphi:[0,1]\to \R$ and every~$p$ in~$\R$, define \emph{the transfer operator}~$\sL_{\varphi,p}$ acting on a 
function~$h$ in~$L^+$ by 
\begin{equation}
(\sL_{\varphi,p}h)(y) \= \sum_{x\in F^{-1}(y)} \exp(S_{m(x)}\varphi(x)-m(x)p )h(x).
\end{equation}
The following lemma relates the transfer operator $\sL_{\varphi,p}$ and the two-variable pressure $\sPf(\varphi,p)$.
\begin{lemm}
\label{l:operator}
Let $\varphi:[0,1]\to \R$ be a continuous potential with bounded distortion on $J_0$ for~$f$ with constant $C>0$. 
The following properties hold.
\begin{enumerate}
\item[1.] For every $p$ in $\R$ the two-variable pressure $\sPf(\varphi,p)$ exists and is in $\R\cup \{+\infty\}$.
\item[2.] For all $y$  in $J_0$,  $p$ in $\R$ and $\ell$ in $\N$, we have
\begin{equation}
\label{e:Z vs L}
\exp(-C) Z_\ell(\varphi, p) \le (\sL_{\varphi,p}^\ell \one) (y)  \le Z_\ell( \varphi, p),
\end{equation}
and thus,
\begin{equation}
\label{e:transfer pressure}
\sPf(\varphi,p) = \lim_{\ell \to +\infty} \frac{1}{\ell} \log (\sL_{\varphi,p}^\ell \one )(y),
\end{equation}
where the limit is independent of $y$ in $J_0$. 
\item[3.] For all $y$ in $J_0$,  $p$ in $\R$ and  $\ell$ in $\N$, we have
\begin{multline}
\label{eq:Z distortion}
\frac{1}{\ell} \log\left( \frac{(\sL_{\varphi,p}^\ell  \one )(y)}{ \exp(C)}\right) \le \frac{1}{\ell} \log\left( \frac{Z_\ell(\varphi, p)}{ \exp(C)} \right) \le \sPf(\varphi,p) \\ 
\le  \frac{1}{\ell} \log Z_\ell(\varphi, p) \le 
\frac{1}{\ell} \log \left( \exp(C) (\sL_{\varphi,p}^\ell  \one )(y)\right).
\end{multline}
In particular, $\sP(\varphi,p)<+\infty$ if and only if $Z_1(\varphi,p)<+\infty$.
\end{enumerate}
\end{lemm}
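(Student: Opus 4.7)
The plan is to derive all three assertions from two quantitative comparisons: the pointwise bound between $\sL_{\varphi,p}^\ell \one$ and $Z_\ell(\varphi,p)$ asserted in~\eqref{e:Z vs L}, and a two-sided multiplicativity $e^{-C} Z_m Z_n \le Z_{m+n} \le Z_m Z_n$ for the sequence $(Z_\ell(\varphi,p))_{\ell \in \N}$. Fekete's lemma in its subadditive and superadditive forms then produces the limits and estimates in (1), (2), and (3).

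First I would prove~\eqref{e:Z vs L}. Each cylinder $J \in \bigvee_{j=0}^{\ell-1} F^{-j}(\fD)$ is an interval on which the total return time $N(J) \= m(x) + m(F(x)) + \cdots + m(F^{\ell-1}(x))$ is constant, and $F^\ell$ restricts to a bijection $J \to J_0$; in particular $J$ is a connected component of $f^{-N(J)}(J_0)$. For each $y \in J_0$ there is a unique $x_{J,y} \in J$ with $F^\ell(x_{J,y}) = y$, and the preimages in $F^{-\ell}(y)$ are in bijection with the length-$\ell$ cylinders. The bounded variation ergodic sums hypothesis on $J$ gives that $S_{N(J)}\varphi(x_{J,y}) - N(J)p$ differs from $\sup_{x \in J}(S_{N(J)}\varphi(x) - N(J)p)$ by at most $C$; summing $\exp(\cdot)$ over cylinders yields~\eqref{e:Z vs L}.

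Next I would establish the supermultiplicativity $Z_{m+n}(\varphi,p) \ge e^{-C} Z_m(\varphi,p) Z_n(\varphi,p)$; the submultiplicative direction was already noted. Every length-$(m+n)$ cylinder $J$ sits inside a unique length-$m$ cylinder $J'$, and $F^m$ maps $J$ bijectively onto a unique length-$n$ cylinder $J''$. Writing $S_{N(J)}\varphi - N(J)p = \alpha + \beta$ with $\alpha = S_{N_1}\varphi - N_1 p$ and $\beta = (S_{N_2}\varphi - N_2 p)\circ F^m$ for the corresponding return times $N_1$ on $J'$ and $N_2$ on $J''$, the bounded variation hypothesis on $J'$ gives $\alpha(x) \ge \sup_{J'} \alpha - C$ for every $x \in J$, while the bijectivity of $F^m\colon J \to J''$ gives $\sup_J \beta = \sup_{J''}(S_{N_2}\varphi - N_2 p)$. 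Thus $\sup_J(\alpha + \beta) \ge \sup_{J'} \alpha + \sup_{J''}(S_{N_2}\varphi - N_2 p) - C$, and summing $\exp(\cdot)$ over the pairs $(J',J'')$ yields the claim.

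With these in hand the rest is bookkeeping. Subadditive Fekete applied to~\eqref{e:Zell} gives $\sPf(\varphi,p) = \inf_\ell \tfrac{1}{\ell}\log Z_\ell(\varphi,p)$, which is the fourth inequality of~\eqref{eq:Z distortion}; superadditive Fekete applied to $(\log(e^{-C} Z_\ell))_\ell$ gives $\tfrac{1}{\ell}\log(Z_\ell/e^C) \le \sPf(\varphi,p)$, the second inequality; the two outer inequalities follow directly from~\eqref{e:Z vs L}. For the finiteness dichotomy in~(1) and at the end of~(3), iterating supermultiplicativity gives $Z_\ell \ge e^{-C(\ell-1)} Z_1^\ell$, hence $\sPf(\varphi,p) \ge \log Z_1 - C$: this forces $\sPf(\varphi,p) = +\infty$ when $Z_1 = +\infty$ and rules out $\sPf(\varphi,p) = -\infty$, while if $Z_1 < +\infty$ submultiplicativity makes every $Z_\ell$ finite and $\sPf(\varphi,p) \le \log Z_1 < +\infty$. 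The limit formula~\eqref{e:transfer pressure} and its independence of $y$ follow by sandwiching $\tfrac{1}{\ell}\log(\sL_{\varphi,p}^\ell\one)(y)$ between $\tfrac{1}{\ell}\log Z_\ell - C/\ell$ and $\tfrac{1}{\ell}\log Z_\ell$ via~\eqref{e:Z vs L} and letting $\ell \to +\infty$. The main obstacle is the supermultiplicativity step, since the suprema of $\alpha$ and $\beta$ cannot be taken independently given the coupling between $x$ and $F^m(x)$; the bounded variation hypothesis on the length-$m$ cylinder $J'$ precisely compensates for this coupling at the cost of a single factor~$e^{-C}$.
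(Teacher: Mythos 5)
Your proposal is correct and follows essentially the same route as the paper: establish the two-sided comparison \eqref{e:Z vs L} via the transfer-operator iteration formula, deduce a two-sided multiplicative inequality for $Z_\ell$ from the bounded-distortion hypothesis, and invoke Fekete's lemma in both directions to obtain the sandwich in \eqref{eq:Z distortion}. The only cosmetic difference is that you state and prove the two-sided $(m,n)$-inequality $e^{-C}Z_m Z_n \le Z_{m+n} \le Z_m Z_n$ directly via the cylinder decomposition, whereas the paper records the equivalent power inequality $(e^{-C}Z_\ell)^k \le Z_{k\ell} \le Z_\ell^k$ and leaves its derivation implicit.
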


Finally, we state some consequences of the Bowen-type formula.
Recall that for every~$\gamma$ in~$(0,+\infty)$, we denote by~$\omega_\gamma$ the function from~$[0,1]$ to~$\R$ 
defined by~$\omega_\gamma(x)\= -x^{\gamma}$. 
Observe that~$\omega_\gamma$ is in~$C_\dag^{1,\gamma}(\R)$.
See also \cite[Proposition 3.3]{CorRiv25b}, for a proof of this result avoing the inducing scheme.
\begin{prop}
\label{p:omega transitions}
For every~$\gamma$ in~$(0,+\infty)$, the potential~$\omega_\gamma$ has a phase transition in temperature if and only
if~$\gamma\le \alpha$.
\end{prop}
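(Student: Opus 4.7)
My plan is to apply the Bowen-type formula combined with the transfer-operator bounds of Lemma~\ref{l:operator} to the one-step partition function $Z_1(\beta\omega_\gamma, 0)$, and then to use the asymptotic~\eqref{eq:10} to detect the threshold $\gamma = \alpha$. First, I note that $\omega_\gamma(x) = -x^\gamma$ satisfies $\omega_\gamma \le 0 = \omega_\gamma(0)$ on $[0,1]$, so $\delta_0$ is a maximizing measure and $\Pf(\beta\omega_\gamma) \ge h_{\delta_0} + \beta \int \omega_\gamma \dd\delta_0 = 0 = \beta\omega_\gamma(0)$ for every $\beta > 0$. Since $\omega_\gamma$ is \textsc{Hölder} continuous and lies in $\Sspace$, Lemma~\ref{l:sarig class is loc holder} gives it bounded variation ergodic sums on $J_0$, so both the Bowen-type formula and Lemma~\ref{l:operator} apply. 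By Corollary~\ref{c:tpt}, showing that $\omega_\gamma$ exhibits a phase transition is then equivalent to exhibiting $\beta > 0$ with $\Pf(\beta\omega_\gamma) = 0$, which by monotonicity of $\sPf(\varphi,\cdot)$ in $p$ and the Bowen-type formula is equivalent to $\sPf(\beta\omega_\gamma, 0) \le 0$.

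To analyze this I will use that, by subadditivity of $\log Z_\ell$ together with Lemma~\ref{l:operator}(3), $\sPf(\beta\omega_\gamma,0) \le \log Z_1(\beta\omega_\gamma,0)$, while the lower bound in Lemma~\ref{l:operator}(3) forces $\sPf = +\infty$ whenever $Z_1 = +\infty$. Thus everything reduces to understanding
\begin{equation*}
Z_1(\beta\omega_\gamma, 0) = \sum_{n=1}^\infty \exp\!\left(\sup_{x \in I_n} S_n(\beta\omega_\gamma)(x)\right).
\end{equation*}
For $x \in I_n$ the orbit $x, f(x), \dots, f^{n-1}(x)$ visits $J_0, J_{n-1}, J_{n-2}, \dots, J_1$ in order, where $J_k = (x_{k+1}, x_k]$. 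Using the bounded-variation bound from Lemma~\ref{l:sarig class is loc holder} to absorb the distortion level by level, I expect to obtain a uniform estimate of the form
\begin{equation*}
\sup_{x \in I_n} S_n(\beta\omega_\gamma)(x) = -\beta \sum_{k=1}^n x_k^\gamma + O(\beta),
\end{equation*}
with an implicit constant independent of $n$.

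The conclusion then splits along $\gamma \lessgtr \alpha$ via equation~\eqref{eq:10}, which gives $x_n^\gamma \sim \alpha^{-\gamma/\alpha}\, n^{-\gamma/\alpha}$. If $\gamma > \alpha$, the series $\sum_k x_k^\gamma$ converges, the general term of $Z_1(\beta\omega_\gamma,0)$ stays bounded below by a positive constant, so $Z_1 = +\infty$; hence $\sPf(\beta\omega_\gamma,0) = +\infty$, $\Pf(\beta\omega_\gamma) > 0$ for every $\beta > 0$, and Corollary~\ref{c:tpt}(1) rules out a phase transition. If $\gamma \le \alpha$, the partial sum $\sum_{k=1}^n x_k^\gamma$ diverges (at rate $n^{1-\gamma/\alpha}$ when $\gamma < \alpha$ and like $\alpha^{-1}\log n$ when $\gamma = \alpha$), so for $\beta$ sufficiently large the series $Z_1(\beta\omega_\gamma,0)$ converges, and by dominated convergence it tends to $0$ as $\beta \to +\infty$. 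Choosing $\beta$ with $Z_1(\beta\omega_\gamma,0) \le 1$ yields $\sPf(\beta\omega_\gamma,0) \le 0$, hence $\Pf(\beta\omega_\gamma) = 0$, producing a phase transition.

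The main obstacle will be justifying the uniform estimate $\sup_{x \in I_n} S_n\omega_\gamma(x) = -\sum_{k=1}^n x_k^\gamma + O(1)$. This is exactly where the bounded-variation-of-ergodic-sums property built into $\Sspace$ (Lemma~\ref{l:sarig class is loc holder}) is essential, combined with the geometric asymptotic~\eqref{eq:10} for the sequence $(x_k)$.
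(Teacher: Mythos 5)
Your overall strategy matches the paper's: analyze the one-step partition function $Z_1(\beta\omega_\gamma,0)$ (equivalently $(\sL_{\beta\omega_\gamma,0}\one)(1)$) using the asymptotic~\eqref{eq:10}, conclude $\sPf(\beta\omega_\gamma,0)=+\infty$ for $\gamma>\alpha$ and $\sPf(\beta\omega_\gamma,0)<0$ for $\gamma\le\alpha$ and $\beta$ large, and then apply the Bowen-type formula together with Corollary~\ref{c:tpt}. The decomposition into the orbit segments across $J_0, J_{n-1},\dots,J_1$ and the appeal to~\eqref{eq:10} are both exactly what the paper does.

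There is, however, a gap in the $\gamma\le\alpha$ case at the dominated-convergence step. You want to send each term $\exp\bigl(\beta\sup_{x\in I_n}S_n\omega_\gamma(x)\bigr)$ to zero as $\beta\to+\infty$, which requires $\sup_{x\in I_n}S_n\omega_\gamma(x)<0$ for \emph{every} $n$. But the bounded-variation bound from Lemma~\ref{l:sarig class is loc holder} only gives $\sup_{x\in I_n}S_n\omega_\gamma(x) \le -\sum_{k=1}^n x_k^\gamma + D$, which is negative only for $n$ large; for small $n$ the sign is uncontrolled by this estimate alone, and those finitely many terms would a priori blow up as $\beta\to\infty$, destroying the dominated-convergence argument. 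The missing observation is that $\omega_\gamma<0$ on $(0,1]$ and $I_n\subset(0,1]$, so $\sup_{I_n}S_n\omega_\gamma<0$ directly; once this is noted, your argument goes through. The paper sidesteps this issue by exploiting the monotonicity of $\omega_\gamma$ rather than Lemma~\ref{l:sarig class is loc holder}: since $\omega_\gamma$ is decreasing, the exact identity $\sup_{I_n}S_n\omega_\gamma = S_n\omega_\gamma(y_{n+1}) = -y_{n+1}^\gamma-\sum_{j=2}^n x_j^\gamma$ holds, which is manifestly negative and yields the needed asymptotics without any $O(1)$ slack. The paper also reduces the whole range $\gamma\le\alpha$ to $\gamma=\alpha$ via $\omega_\gamma\le\omega_\alpha$, so it never has to distinguish the polynomial and logarithmic decay rates; this is a minor simplification rather than a substantive difference.
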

The following corollary, together with~\cite[Theorem A.1]{{InoRiv23}}, extends~\cite[Theorem~C]{Klo20} for \textsc{Maneville-Pomeau} maps.
\begin{coro}\label{c:no phase transition}
Let $\varphi:[0,1]\to \R$ be a \textsc{Hölder} continuous potential. 
If there is $\delta$ in $(0,1]$ such that 
$\varphi \ge \varphi(0)$ on $ [0,\delta]$,  then $\varphi$ does not have a phase transition in temperature.
\end{coro}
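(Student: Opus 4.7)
The plan is to prove the contrapositive of the dichotomy in Corollary~\ref{c:tpt}: I will show that $P(\beta\varphi) > \beta\varphi(0)$ for every $\beta \in (0,+\infty)$, which immediately places~$\varphi$ in case~(1) of that corollary, where $\beta \mapsto P(\beta\varphi)$ is real-analytic on all of $(0,+\infty)$. The strategy is to use the Bowen-type formula to convert the strict inequality $P(\beta\varphi) > \beta\varphi(0)$ into the assertion $\sP(\beta\varphi, \beta\varphi(0)) > 0$, and then to verify this by showing that already $Z_1(\beta\varphi, \beta\varphi(0)) = +\infty$, so that Lemma~\ref{l:operator}(3) forces the two-variable pressure to be $+\infty$.

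The heart of the argument is a simple orbit-counting estimate that exploits the nonnegativity of $\varphi - \varphi(0)$ near zero. Fix $\beta > 0$ and choose $N\in\N$ with $x_N \le \delta$. For every $n \ge N$ and every $x \in I_n$, the intermediate iterates satisfy $f^k(x) \in J_{n-k}$ for $k = 1, \ldots, n-1$, so $f^k(x) \le x_{n-k}$. When $n - k \ge N$ this gives $f^k(x) \le \delta$, hence $\varphi(f^k(x)) \ge \varphi(0)$ by hypothesis. The only indices at which $\varphi(f^k(x)) - \varphi(0)$ could be negative are $k = 0$ and those with $n - k < N$, a total of at most $N$ indices, each contributing at least $-2\|\varphi\|$. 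Therefore
\begin{equation}
S_n(\beta\varphi)(x) - n\beta\varphi(0) \ge -2N\beta\|\varphi\|
\end{equation}
uniformly in $n \ge N$ and $x \in I_n$. Summing the defining series for $Z_1$ then yields
\begin{equation}
Z_1(\beta\varphi, \beta\varphi(0)) \ge \sum_{n \ge N} e^{-2N\beta\|\varphi\|} = +\infty.
\end{equation}

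To close the argument, I note that every \textsc{Hölder} continuous potential has bounded variation ergodic sums on $J_0$ for $f$, a standard consequence of the uniform bounded distortion of the first return map $F$ (Lemma~\ref{l:geometric-distortion}). Thus Lemma~\ref{l:operator}(3) applies, and the lower bound in~\eqref{eq:Z distortion} with $\ell = 1$ gives $\sP(\beta\varphi, \beta\varphi(0)) = +\infty > 0$. The Bowen-type formula then delivers $P(\beta\varphi) > \beta\varphi(0)$ for every $\beta > 0$, and Corollary~\ref{c:tpt} finishes the proof. I do not anticipate any substantive obstacle here: the only mildly subtle point is confirming that \textsc{Hölder} continuity suffices for the bounded variation hypothesis in Lemma~\ref{l:operator}, which is classical for \textsc{Manneville–Pomeau} maps via the uniform expansion of $F$ on each $I_n$.
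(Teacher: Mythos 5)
Your orbit estimate is correct and it is indeed the key observation: for $n \ge N$ (where $x_N \le \delta$) and $x \in I_n$, the iterates $f^k(x)$ lie in $J_{n-k} \subseteq [0,\delta]$ for $1 \le k \le n-N$, so at most $N$ terms of the Birkhoff sum can contribute negatively, giving $S_n\varphi(x) - n\varphi(0) \ge -2N\|\varphi\|$ uniformly in $n$ and $x \in I_n$. However, the deduction you build on top of it has a genuine gap. You invoke Lemma~\ref{l:operator}(3) and the Bowen-type formula, both of which require the potential to have bounded variation ergodic sums on $J_0$ for $f$ (Definition~\ref{d:bd potentials}), and you assert that this is ``a standard consequence of the uniform bounded distortion of the first return map.'' That is not true for arbitrary H\"older continuous potentials. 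Bounded distortion of $Df^n$ (Lemma~\ref{l:geometric-distortion}) controls the geometry of the preimages, but the variation of $S_n\varphi$ on a cylinder $J \subseteq I_n$ is bounded by $|\varphi|_\eta \sum_{j} |f^j(J)|^\eta \lesssim \sum_{k\ge 1}|J_k|^\eta$, and since $|J_k| \asymp k^{-(1+1/\alpha)}$, this series diverges precisely when $\eta \le \alpha/(\alpha+1)$. For such exponents one can produce H\"older potentials oscillating at scale $|J_k|^\eta$ on each $J_k$ whose ergodic sums have unbounded variation. This is exactly why the paper proves the bounded-variation property only for the smaller class $\Sspace$ (Lemma~\ref{l:sarig class is loc holder}) and states the hypothesis explicitly in Lemma~\ref{l:operator} and the Bowen-type formula, rather than for all H\"older potentials.

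The paper's proof sidesteps the issue entirely: it takes $\gamma > \alpha$, shows $\varphi - \varphi(0) \ge c\,\omega_\gamma$ on $[0,1]$ for a suitable $c > 0$, and then uses monotonicity of pressure together with Proposition~\ref{p:omega transitions} (for $\omega_\gamma$, which \emph{does} lie in $C_\dag^{1,\gamma}(\R)$ and hence has bounded variation ergodic sums) to get $P(\beta\varphi) > \beta\varphi(0)$ for all $\beta$, and concludes via Corollary~\ref{c:tpt}. Your orbit estimate could in fact be turned into a complete proof, but not through Lemma~\ref{l:operator}: the right tool is the free-branch generating-function argument of Appendix~\ref{s:key lemma}. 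Each word $(n_1,\ldots,n_\ell)$ with all $n_i \ge N$ singles out a distinct $z \in f^{-(n_1+\cdots+n_\ell)}(1)$ whose Birkhoff sum you can lower-bound by $(n_1+\cdots+n_\ell)\varphi(0) - 2N\|\varphi\|\ell$, which forces the radius of convergence of $\sum_m \Lambda_m s^m$ to be strictly smaller than $e^{-\beta\varphi(0)}$ and hence $P(\beta\varphi) > \beta\varphi(0)$; this route needs no bounded-variation hypothesis. As written, though, the proposal relies on a false auxiliary claim, so it does not stand on its own.
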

\proof
Take~$\gamma > \alpha$, and
put~$m\= \inf_{x\in [\delta,1]} \varphi(x)-\varphi(0)$ and~$c\= -m\delta^{-\gamma}$. If~$m<0$, then~$\varphi(x)-\varphi(0)\ge c \omega_\gamma(x)$ on~$[\delta,1]$ and ~$\varphi(x)-\varphi(0)\ge 0$ on~$[0,\delta]$. 
Hence,~$\varphi(x)-\varphi(0)\ge c \omega_\gamma(x)$ on~$[0,1]$.  By Corollary~\ref{c:tpt} and Proposition~\ref{p:omega transitions}, we have
\begin{equation}
P(\varphi-\varphi(0))\ge P(c\omega_\gamma) > c\omega_\gamma(0)=0.
\end{equation}
Then,~$P(\varphi)> \varphi(0)$.
Again, by Corollary \ref{c:tpt}, the potential $\varphi$ does not have a phase transition in temperature.
If $m\ge 0$ then  $\varphi(x)-\varphi(0)\ge \omega_\gamma(x)$ on $[0,1]$. The same argument shows that $\varphi$ has no phase transition in temperature, which concludes the proof of the result.
\endproof

\subsection{Proof of Theorems \ref{t:sarig}, \ref{t:sarig alpha}, and \ref{t:robust sarig}}
\label{ss:proofs}

Before beginning the proofs of the theorems, we state a lemma concerning the expansion at zero for potentials in $\Sspace$. In particular, this lemma clarifies the meaning of the leading coefficient introduced in \S\ref{ss:Robust phase transitions in temperature}. The proof of the lemma is given in \S\ref{ss:basic lemmas}.

\begin{lemm}
\label{l:potential form}
Let $\gamma$ be in $(0,+\infty)$ and let $\varphi:[0,1]\to \R$ be a function. We have that $\varphi$ belongs to $\Sspace$ if and only if there are $c$ in $\R$ and a function 
$h$ in $C_\dag^1(\R)$ such that  for every $x$ in $[0,1]$ 
\begin{equation} 
\varphi(x)=\varphi(0)+cx^\gamma+h(x)x^\gamma, 
h(0)=0, \text{ and } \lim_{x\to 0^+}h'(x)x =0.
\end{equation}
In particular, $c$ and $h$ are unique with $c=\lim_{x\to 0^+}\frac{\varphi'(x)}{\gamma x^{\gamma-1}}$. 
\end{lemm}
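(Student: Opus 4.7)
The plan is to prove the two implications by direct computation, and obtain uniqueness as a byproduct of the easier direction.

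For the implication $\Leftarrow$, I would assume $\varphi(x) = \varphi(0) + cx^\gamma + h(x)x^\gamma$ with $h \in C_\dag^1(\R)$, $h(0)=0$, and $\lim_{x\to 0^+} h'(x)x = 0$. Differentiating on $(0,1]$ yields
\[
\varphi'(x) = c\gamma x^{\gamma-1} + h'(x) x^\gamma + \gamma h(x) x^{\gamma-1},
\]
so dividing by $\gamma x^{\gamma-1}$ gives $\varphi'(x)/(\gamma x^{\gamma-1}) = c + h'(x)x/\gamma + h(x)$, whose right-hand side tends to $c$ as $x\to 0^+$ by continuity of $h$ at zero with $h(0)=0$ and by the hypothesis $h'(x)x\to 0$. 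This shows $\varphi \in \Sspace$ and identifies $c$ as the leading coefficient, which in turn forces $h(x) = (\varphi(x)-\varphi(0)-cx^\gamma)/x^\gamma$ on $(0,1]$, giving uniqueness of both $c$ and $h$.

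For the converse $\Rightarrow$, set $c \= \lim_{x\to 0^+}\varphi'(x)/(\gamma x^{\gamma-1})$, $g(x) \= \varphi(x)-\varphi(0)-cx^\gamma$, and $h(x) \= g(x)/x^\gamma$ on $(0,1]$ with $h(0)\=0$. The map $h$ is automatically $C^1$ on $(0,1]$ by the quotient rule. The crucial step is to show $h$ is continuous at zero, i.e., $g(x) = o(x^\gamma)$. I would first justify
\[
g(x) = \int_0^x g'(t)\, dt \qquad \text{for every } x \in (0,1],
\]
by noting that $g'(t) = \varphi'(t) - c\gamma t^{\gamma-1}$ satisfies $g'(t)/(\gamma t^{\gamma-1}) \to 0$, hence $|g'|$ is dominated near $0$ by a constant multiple of $t^{\gamma-1}$, which is integrable on $(0,x]$ for every $\gamma>0$; then pass to the limit $\varepsilon \to 0^+$ in the identity $g(x)-g(\varepsilon) = \int_\varepsilon^x g'(t)\, dt$ using continuity of $g$ at $0$ together with $g(0)=0$. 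Given $\eta>0$, I would then choose $\delta>0$ with $|g'(t)| \le \eta\gamma t^{\gamma-1}$ on $(0,\delta]$, which gives $|g(x)| \le \eta x^\gamma$ for $x \in (0,\delta]$ and hence $h(x)\to 0$.

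To close, I would verify $h'(x)x \to 0$ by using the quotient rule:
\[
h'(x)x = \frac{\varphi'(x)}{x^{\gamma-1}} - \gamma\cdot\frac{\varphi(x)-\varphi(0)}{x^\gamma} = \frac{\varphi'(x)}{x^{\gamma-1}} - \gamma\bigl(h(x)+c\bigr),
\]
which tends to $\gamma c - \gamma c = 0$ by the previous step and the definition of $c$. The only non-routine point is the integral representation of $g$ in the range $\gamma \in (0,1)$, where $\varphi'$ may be unbounded at the origin; this is handled by the explicit dominating bound $|g'(t)| \lesssim t^{\gamma-1}$ near zero.
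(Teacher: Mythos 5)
Your proof is correct and follows essentially the same route as the paper: for $\Rightarrow$ you define $c$ by the limit, set $h(x)=(\varphi(x)-\varphi(0))/x^\gamma-c$, use the bound $|\varphi'(t)-c\gamma t^{\gamma-1}|\le\eta\gamma t^{\gamma-1}$ near $0$ to get $h(x)\to 0$, and then deduce $h'(x)x\to 0$ from the product rule; for $\Leftarrow$ and uniqueness you differentiate directly. The only difference is that you explicitly justify the integral representation $g(x)=\int_0^x g'(t)\,dt$ via domination by $t^{\gamma-1}$ (needed when $\gamma<1$), a step the paper passes over silently with a bare ``Then,''; this is a welcome but minor amplification, not a new argument.
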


\proof[Proof of Theorem \ref{t:sarig}]
Under the assumptions of the theorem, we have that~$\varphi$ belongs to~$\Sspace$. 

\partn{1} Assume that  $\gamma\le \alpha$ and $c>0$.  Replacing $\varphi$ by $\varphi-\varphi(0)$ if necessary, assume  
$\varphi(0)=0$. 
By \eqref{eq:10}  and Lemma \ref{l:potential form}, there is~$n_0$ in~$\N$ such that for every integer $k > n_0$ and every $x$ in $(x_{k},x_{k-1}]$ we have
\begin{equation}
\label{e:c positive}
\varphi(x) \ge \frac{c}{2} (\alpha k)^{-\gamma/\alpha}.
\end{equation}
For every integer $n\ge 1$, let $p_n$ be the periodic point of $f$ of period $n$ in $(y_{n+1},y_{n}]$. We have that for every $j$ in $\{1,\ldots,n-1\}$,  $f^j(p_n)$ is in $(x_{n+1-j},x_{n-j}]$.  Together with \eqref{e:c positive} this implies 
\begin{equation}
S_n\varphi(p_n) \ge S_{n-1-n_0}\varphi(f(p_n)) - (n_0+1)\|\varphi\| \ge \left( \frac{c}{2} \sum_{k=n_0+1}^{n-1}\frac{1}{(\alpha k)^{\gamma/\alpha}} \right) 
- (n_0+1)\|\varphi\|.
\end{equation}
Since $\gamma\le \alpha$ and $c>0$, for $n$ sufficiently large, we get $S_n\varphi(p_n) > 0$. Thus, the invariant probability measure 
$\mu_{n}$ supported on the orbit of $p_n$ satisfies $\int \varphi \dd \mu_{n} > 0$, which implies that $\delta_0$ is not a maximizing measure for $\varphi$.

\partn{2} Assume that $\gamma\le \alpha, c < 0$ and $\delta_0$ is the unique maximizing measure for $\varphi$. By the Main Theorem and Remark \ref{r:n0}, the potential
$\varphi$ has a phase transition in temperature.

\partn{3} Assume that $\varphi$ has a phase transition in temperature. 
We have $c\le 0$ by item 1, and by Corollary~\ref{c:tpt}  and the Key Lemma,~$\delta_0$ is the unique maximizing measure for $\varphi$; see \S\ref{ss:organization}.
Suppose we had~$\gamma>\alpha$. By Lemma~\ref{l:potential form} there is~$c'$ in~$(0,+\infty)$ such that  for every~$x$ in~$[0,1]$ 
we have~$\varphi(x)-\varphi(0) \ge c' \omega_\gamma(x)$. 
By Proposition~\ref{p:omega transitions} and Corollary \ref{c:tpt}, for every~$\beta>0$ we have 
\begin{equation}
P(\beta\varphi)-\beta\varphi(0) = P(\beta\varphi-\beta\varphi(0)) \ge  P(\beta c' \omega_\gamma)>0.
\end{equation}
Then, by Corollary \ref{c:tpt}, the potential $\varphi$ does not have a phase transition in temperature, which is a contradiction. 
\endproof

\proof[Proof of Theorem \ref{t:sarig alpha}]
By Theorem~\ref{t:sarig}(3), the leading coefficient~$c$ of~$\varphi$ in~$C_\dag^{1,\alpha}(\R)$ is nonpositive. Suppose we had~$c = 0$. By Lemma \ref{l:potential form}
there is a continuous function~$h:[0,1]\to \R$ such that $h(0)=0$ and for every $x$ in $[0,1]$ we have $\varphi(x)=\varphi(0)+h(x)x^\alpha$. Then, for every 
$\beta$ in $(0,+\infty)$ there is $n_1$ in $\N$  such that for every $x$ in $[0, x_{n_1}]$ we have 
\begin{equation}
-\frac{\alpha \beta^{-1}}{2} x^\alpha \le \varphi(x) - \varphi(0).
\end{equation}
By \eqref{eq:10}, taking $n_1$ larger, if necessary, for every integer $n\ge n_1$ we have  
\begin{equation}
 x_n^\alpha \le \frac{2}{\alpha n}.
\end{equation}
Then, there is a constant $K>0$ such that for every $n$ in $\N$ we get
\begin{equation}
\frac{K}{n^{\beta^{-1}}} \le \exp(S_n\varphi(x_n) - n\varphi(0)).
\end{equation}
Thus, 
\begin{equation}
(\sL_{\beta\varphi, \beta \varphi(0)} \one) (1) = +\infty.
\end{equation}
By the Bowen-type formula and Lemma \ref{l:operator},   $P(\beta\varphi)>\beta\varphi(0)$. Thus, by Corollary \ref{c:tpt}, the potential $\varphi$ does not have a phase transition in temperature, which is a contradiction and finishes the proof of the theorem. 
\endproof

\proof[Proof of Theorem \ref{t:robust sarig}]
The implication~$1\Rightarrow 2$ follows from Theorem~\ref{t:sarig}(3). Now we prove~\mbox{$2\Rightarrow 3$}. 
Theorem~\ref{t:sarig}(1) shows that the leading coefficient~$c$ of~$\varphi$ in~$\Sspace$ is nonpositive. 
Let's demonstrate that $c$ is negative. Suppose we had~$c=0$. Then, for every~$\varepsilon>0$ the potential~$\varphi-\varepsilon \omega_\gamma$ has positive leading coefficient in~$\Sspace$. By Theorem~\ref{t:sarig}(1),~$\delta_0$ is not a maximizing measure for~$\varphi-\varepsilon \omega_\gamma$, which is a contradiction. Therefore,~$c<0$. Together with Theorem \ref{t:sarig}(2), this implies that $\varphi$ has a phase transition in temperature.

Finally, the implication~$3\Rightarrow 1$ follows from the Main Theorem and Remark~\ref{r:n0}, concluding the proof of the theorem.
\endproof

\section{Preliminaries}
\label{s:preliminaries}

\subsection{Indifferent branch and bounded distortion}
\label{ss:indifferent-branch}
Note that the function~$\log Df$ is strictly increasing and \textsc{H{\"o}lder} continuous of exponent~$\min \{1, \alpha\}$.
For every~$j$ in~$\N_0$ note that~$f(x_{j + 1}) = x_j$ and~$f(J_{j + 1}) = J_j$.

We start by proving \eqref{eq:10} and some estimates for $Df^n$ on $J_n$.

\begin{lemm}
  \label{l:indifferent-branch'}
  We have
  \begin{equation}
    \label{eq:10'}
    \lim_{n \to +\infty} n \cdot x_n^{\alpha}
    =
    \frac{1}{\alpha}.
  \end{equation}
  Moreover, there is~$\varepsilon_0$ in~$(0, 1)$ such that for all~$n$ in~$\N$ and~$x$ in~$J_n$ we have
  \begin{equation}
    \label{eq:11}
    (1 + \varepsilon_0 n)^{\frac{1}{\alpha} + 1}
    \le
    Df^n(x)
    \le
    (1 + \varepsilon_0^{-1} n)^{\frac{1}{\alpha} + 1}.
  \end{equation}
\end{lemm}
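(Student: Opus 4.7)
The plan is to first establish the asymptotic~\eqref{eq:10'} and then use it to bound the derivative~$Df^n$ along the orbit of any point in~$J_n$.

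For~\eqref{eq:10'}, I would exploit the recursion $x_n = x_{n+1}(1 + x_{n+1}^\alpha)$, which comes from the branch $f(x) = x(1+x^\alpha)$ on $[0,x_1]$ and the relation $f(x_{n+1}) = x_n$. A routine monotone-convergence argument first gives $x_n \searrow 0$: if $x_n \to a > 0$, then the recursion would force $a = a(1+a^\alpha)$, a contradiction. Setting $u_n \coloneqq x_n^{-\alpha}$, the recursion becomes $u_n = u_{n+1}(1 + u_{n+1}^{-1})^{-\alpha}$, and expanding $(1+u_{n+1}^{-1})^{-\alpha} = 1 - \alpha\, u_{n+1}^{-1} + O(u_{n+1}^{-2})$ yields
\[
u_{n+1} - u_n = \alpha + O(u_{n+1}^{-1}) \xrightarrow[n\to\infty]{} \alpha.
\]
The Ces{\`a}ro theorem then gives $u_n/n \to \alpha$, which is \eqref{eq:10'}.

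For~\eqref{eq:11}, I would use $Df(y) = 1 + (\alpha+1)y^\alpha$ together with the combinatorial fact that for $x \in J_n$ and $j \in \{0, \ldots, n-1\}$ the iterate $f^j(x)$ lies in $J_{n-j}$, so $(f^j(x))^\alpha \in (x_{n-j+1}^\alpha, x_{n-j}^\alpha]$. Taking logarithms of the product representing $Df^n(x)$ and applying Step 1 in the sharpened form $x_k^\alpha = (1 + o(1))/(\alpha k)$, the goal is to show
\[
\log Df^n(x)
=
\sum_{j=0}^{n-1} \log\bigl(1 + (\alpha+1)(f^j(x))^\alpha\bigr)
=
\left( \tfrac{1}{\alpha} + 1 \right) \log n + O(1),
\]
with the $O(1)$ uniform in $x \in J_n$ and $n \in \N$. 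This is equivalent to $Df^n(x) \asymp n^{1/\alpha + 1}$ uniformly on $J_n$, and~\eqref{eq:11} then follows by choosing $\varepsilon_0 \in (0,1)$ sufficiently small to absorb both the $O(1)$ constant and the finitely many small values of $n$ into $(1+\varepsilon_0 n)^{1/\alpha + 1}$ and $(1+\varepsilon_0^{-1} n)^{1/\alpha+1}$.

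The main technical obstacle is handling the early indices of the sum, where $f^j(x)$ lies in some $J_k$ with $k$ small, so $(f^j(x))^\alpha$ is not small and the naive linearization $\log(1+y) \approx y$ is unavailable. I would overcome this by choosing $n_1 \in \N$ with $x_k^\alpha \le 1/2$ for every $k \ge n_1$, splitting the sum at $n - j = n_1$, bounding the finitely many terms with $n - j \le n_1$ by a single constant (since $\log Df$ is continuous and bounded on each $\overline{J_k}$), and applying the two-sided bound $y/2 \le \log(1+y) \le y$ on $[0,1/2]$ to the remaining tail. Combined with the harmonic estimate $\sum_{k=n_1+1}^{n} 1/k = \log n + O(1)$ and Step 1, this produces the correct leading coefficient $1/\alpha + 1$ and closes the argument.
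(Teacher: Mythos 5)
Your derivation of~\eqref{eq:10'} is essentially the paper's argument in slightly different notation: both pass to $u_n = x_n^{-\alpha}$, show $u_{n+1} - u_n = \alpha + O(1/u_{n+1})$, and conclude $u_n \sim \alpha n$ (the paper via a drift-and-error decomposition $\delta_n = u_n - \alpha n$, you via Ces\`aro). That part is fine.

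The argument for~\eqref{eq:11}, however, has a genuine gap, and it is in the lower bound. You correctly identify the target, namely
$\log Df^n(x) = \bigl(\tfrac{1}{\alpha}+1\bigr)\log n + O(1)$ uniformly on~$J_n$, but the tool you propose for the tail, the two-sided bound $y/2 \le \log(1+y) \le y$ on $[0,1/2]$, is not sharp enough to reach it. With this bound the lower estimate becomes
$\log Df^n(x) \ge \tfrac12 \sum_{k} (\alpha+1)(f^j(x))^\alpha + O(1)$, whose leading term is $\tfrac12(\tfrac{1}{\alpha}+1)\log n$, not $(\tfrac{1}{\alpha}+1)\log n$. The factor $1/2$ is lost \emph{in the exponent}, not in the constant, so you only obtain $Df^n(x) \gtrsim n^{\frac12(\frac1\alpha+1)}$, which is strictly weaker than the first inequality in~\eqref{eq:11}. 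To close the argument you need the asymptotic form $\log(1+y) = y + O(y^2)$ with a \emph{summable} quadratic error, as the paper uses in~\eqref{eq:21}.

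A second, subtler issue: even after replacing $y/2 \le \log(1+y)$ by $\log(1+y) = y + O(y^2)$, the estimate $x_k^\alpha = (1+o(1))/(\alpha k)$ that you invoke is not strong enough. Writing $x_k^\alpha = \frac{1}{\alpha k}(1+\epsilon_k)$ with only $\epsilon_k \to 0$, the discrepancy
$\log\bigl(1+(\alpha+1)x_k^\alpha\bigr) - \bigl(\tfrac1\alpha+1\bigr)\tfrac{1}{k}$
has a contribution of order $\epsilon_k/k$, and $\sum_k |\epsilon_k|/k$ need not converge under the mere hypothesis $\epsilon_k \to 0$ (take $\epsilon_k = 1/\log k$). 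You need a rate. Fortunately, you have already done the work to produce one: telescoping your own increment estimate $u_{n+1} - u_n = \alpha + O(1/n)$ gives $u_n = \alpha n + O(\log n)$, i.e.\ $\epsilon_k = O(\log k / k)$, and then $\sum_k |\epsilon_k|/k = O\bigl(\sum_k \log k / k^2\bigr) < \infty$. This is precisely the sharpening encoded in the paper's~\eqref{eq:19}. So the fix is to state and use $u_n = \alpha n + O(\log n)$ rather than the weaker Ces\`aro consequence, and to use the genuine linearization $\log(1+y) = y + O(y^2)$ on the tail rather than the factor-of-two bound.
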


\begin{proof}
  Following~\cite[\S11]{Fat19}, we consider the maps~$\whf$, $h$, and~$\sF$ from~$(0, +\infty)$ to~$(0, +\infty)$ defined by
  \begin{equation}
    \label{eq:12}
    \whf(x)
    \=
    x(1 + x^{\alpha}),
    h(x)
    \=
    x^{-\alpha},
    \text{ and }
    \sF
    \=
    h \circ \whf \circ h^{-1}.
  \end{equation}
  For every~$X$ in~$(0, +\infty)$, we have
  \begin{equation}
    \label{eq:13}
    \sF(X)
    =
    X (1 + X^{-1})^{-\alpha}.
  \end{equation}
  It follows that there is~$C$ in~$(0, +\infty)$, such that for every sufficiently large~$X$ we have
  \begin{equation}
    \label{eq:14}
    |\sF(X) - (X - \alpha)|
    \le
    C \frac{1}{X}.
  \end{equation}
  
  For every~$n$ in~$\N_0$ put ${X_n \= h(x_n)}$.
  Since~$\whf$ coincides with~$f$ on~$[0, x_1]$, for every~$n$ in~$\N_0$ we have
  \begin{equation}
    \label{eq:15}
    \sF(X_{n + 1})
    =
    X_n.
  \end{equation}
  Since the sequence~$(x_n)_{n \in \N_0}$ is strictly decreasing and the only fixed point of~$f$ on~$[0, x_1]$ is~$0$, we have
  \begin{equation}
    \label{eq:16}
    \lim_{n \to +\infty} x_n = 0
    \text{ and }
    \lim_{n \to +\infty} X_n = +\infty.
  \end{equation}
  Together with~\eqref{eq:14} and~\eqref{eq:15}, this implies that for every sufficiently large~$n$ we have~${X_{n + 1} \ge X_n + 2\alpha/3}$.
  It follows that for every sufficiently large~$n$, we have
  \begin{equation}
    \label{eq:17}
    X_n
    \ge
    \alpha n / 2.
  \end{equation}
  Next, define for each~$n$ in~$\N$ the number ${\delta_n \= X_n - \alpha n}$.
  By~\eqref{eq:14} and~\eqref{eq:17}, for every sufficiently large~$n$ we have
  \begin{equation}
    \label{eq:18}
    |\delta_{n + 1} - \delta_n|
    \le
    (2C\alpha^{-1}) \frac{1}{n}.
  \end{equation}
  It follows that there is a constant~$C'$ in~$(0, +\infty)$ such that for every sufficiently large~$n$ we have
  \begin{equation}
    \label{eq:19}
    |X_n - \alpha n|
    =
    |\delta_n|
    \le
    C' \log n.
  \end{equation}
  This implies~\eqref{eq:10'}.
  To prove~\eqref{eq:11}, note that~$Df$ is increasing, so for all~$n$ in~$\N$ and~$x$ in~$J_n$ we have
  \begin{multline}
    \label{eq:20}
    \prod_{j = 1}^{n} \left( 1 + (1 + \alpha) \frac{1}{X_{j + 1}} \right)
    =
    Df^n(x_{n + 1})
    \le
    Df^n(x) \\
    \le
    Df^n(x_n)
    =
    \prod_{j = 1}^n \left( 1 + (1 + \alpha) \frac{1}{X_j} \right).
  \end{multline}
  By~\eqref{eq:18}, there is~$C''$ in~$(0, +\infty)$ such that for every sufficiently large~$j$ we have
  \begin{equation}
    \label{eq:21}
    \left| \log \left( 1 + (1 + \alpha) \frac{1}{X_j} \right) - \left( \frac{1}{\alpha} + 1 \right) \frac{1}{j} \right|
    \le
    C'' \frac{\log j}{j^2}.
  \end{equation}
  Combined with~\eqref{eq:20}, this implies that there is~$C'''$ in~$(1, +\infty)$ such that for every sufficiently large~$n$ we have
  \begin{equation}
    \label{eq:22}
    \frac{1}{C'''} n^{\frac{1}{\alpha} + 1}
    \le
    Df^n(x)
    \le
    C''' n^{\frac{1}{\alpha} + 1}.    
  \end{equation}
  Together with the fact that for every~$n$ in~$\N$ we have ${Df^n(x_n) > 1}$, this implies~\eqref{eq:11}.
\end{proof}

\begin{lemm}[Bounded distortion]
  \label{l:geometric-distortion}
  There are~$\varepsilon_1$ in $(0,1)$ and~$C_1$ in~$(1, +\infty)$, such that the following properties hold.
  For every~$n$ in~$\N$ and every connected component~$J$ of~$f^{-n}(J_0)$, we have
  \begin{equation}
    \label{eq:23}
    |J|
    \le
    \frac{|J_0|}{(1 + \varepsilon_1 n)^{\frac{1}{\alpha} + 1}}
  \end{equation}
  and for all~$x$ and~$y$ in~$J$ we have
  \begin{equation}
    \label{eq:24}
    Df^n(x)
    \ge
    (1 + \varepsilon_1 n)^{\frac{1}{\alpha} + 1}
    \text{ and }
    C_1^{-1}
    \le
    \frac{Df^n(x)}{Df^n(y)}
    \le
    C_1.
  \end{equation}
\end{lemm}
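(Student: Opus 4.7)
The plan is to combine Lemma~\ref{l:indifferent-branch'} with the Markov structure of $f$ to treat an arbitrary connected component $J$ of $f^{-n}(J_0)$. Since $f$ maps $J_k$ diffeomorphically onto $J_{k-1}$ for $k \ge 2$, maps $I_m$ diffeomorphically onto $J_{m-1}$ for $m \ge 2$, and satisfies $f(J_1)=f(I_1)=J_0=\bigsqcup_{m\ge 1}I_m$, the family $\{J_k\}_{k\ge 1}\cup\{I_m\}_{m\ge 1}$ is a Markov partition of $(0,1]$ adapted to the discontinuity at $x_1$. Consequently the itinerary of $J$ consists of a (possibly empty) initial descent $J_{k_0}J_{k_0-1}\cdots J_1$ followed by $r\ge 0$ excursions $I_{m_i}J_{m_i-1}\cdots J_1$, giving a decomposition $n=k_0+m_1+\cdots+m_r$ with $k_0\in\N_0$ and each $m_i\in\N$.

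For the derivative lower bound I would multiply piecewise estimates along the itinerary. Lemma~\ref{l:indifferent-branch'} gives $Df^{k_0}\ge (1+\varepsilon_0 k_0)^{1/\alpha+1}$ on the initial descent (trivially if $k_0=0$). For an excursion of length $m_i$, I would factor $Df^{m_i}(x)=Df(x)\cdot Df^{m_i-1}(f(x))$ with $x\in I_{m_i}$ and $f(x)\in J_{m_i-1}$; since $Df(x)=1+(\alpha+1)x^\alpha$ is strictly increasing and $I_{m_i}\subseteq(x_1,1]$, we have $Df\ge D_0\=1+(\alpha+1)x_1^\alpha>1$ on $I_{m_i}$. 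Combining this with Lemma~\ref{l:indifferent-branch'} on $J_{m_i-1}$, and absorbing the case $m_i=1$ by the strict inequality $D_0>1$, one produces $\varepsilon'>0$ with $Df^{m_i}\ge (1+\varepsilon' m_i)^{1/\alpha+1}$ on $I_{m_i}$ for every $m_i\ge 1$. Setting $\varepsilon_1\=\min(\varepsilon_0,\varepsilon')$ and iterating the elementary inequality $\prod_i(1+a_i)\ge 1+\sum_i a_i$ for $a_i\ge 0$ then yields $Df^n\ge (1+\varepsilon_1 n)^{1/\alpha+1}$, and the diameter bound~\eqref{eq:23} follows by integrating this pointwise bound along the diffeomorphism $f^n\colon J\to J_0$.

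The distortion bound rests on the observation that for each $0\le j\le n$ the image $f^j(J)$ is itself a connected component of $f^{-(n-j)}(J_0)$: it lies in such a component $J'$, and bijectivity of $f^{n-j}\colon J'\to J_0$ forces $f^j(J)=J'$. Thus the diameter bound just proved applies to every $f^j(J)$. Using the Hölder continuity of $\log Df$ with exponent $\beta\=\min(1,\alpha)$ noted in~\S\ref{ss:indifferent-branch} with Hölder constant $L$, I would estimate
\[
\Bigl| \log \tfrac{Df^n(x)}{Df^n(y)} \Bigr|
\le
\sum_{j=0}^{n-1} L\,|f^j(J)|^\beta
\le
L\,|J_0|^\beta \sum_{j=0}^{n-1} (1 + \varepsilon_1 (n-j))^{-\beta(1/\alpha+1)}.
\]
The exponent $\beta(1/\alpha+1)$ equals $1+\alpha$ when $\alpha\le 1$ and $1+1/\alpha$ when $\alpha>1$, and in both cases exceeds $1$, so the series is summable uniformly in $n$; defining $C_1$ as the exponential of this uniform tail bound completes the proof. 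The only non-routine step in the whole argument is the uniform excursion estimate at $m_i=1$, where Lemma~\ref{l:indifferent-branch'} reduces to the tautology $Df^0\ge 1$ and one must genuinely exploit $D_0>1$ on the non-indifferent part of $I_m$ to recover the correct $m^{1/\alpha+1}$ scaling; everything else is bookkeeping on the Markov decomposition and summation of a convergent tail.
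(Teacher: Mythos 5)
Your argument is correct and is essentially the paper's proof in slightly different clothing: the paper tracks the binary itinerary $a_m=\mathbf 1_{\{f^m(J)\subseteq J_0\}}$ and bounds the $r$ single steps in $J_0$ by a fixed $(1+\varepsilon_1)^{1/\alpha+1}$ and the $s$ maximal $0$-blocks by Lemma~\ref{l:indifferent-branch'}, whereas you bundle each return to $J_0$ together with the subsequent descent into a single ``excursion'' and absorb the $m_i=1$ case with the expansion constant $D_0>1$ on $J_0$ before invoking $\prod_i(1+a_i)\ge 1+\sum_i a_i$. The distortion estimate, identifying $f^j(J)$ with a component of $f^{-(n-j)}(J_0)$ and summing the convergent $\textsc{Hölder}$ tail, is precisely what the paper does.
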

\proof
 Let~$\varepsilon_0$ in~$(0, 1)$ be from Lemma~\ref{l:indifferent-branch'} and let~$\varepsilon_1$ in~$(0, \varepsilon_0]$ be such that for every~$x$ in~$J_0$ we have
  \begin{equation}
    \label{eq:25}
    Df(x)
    \ge
    (1 + \varepsilon_1)^{\frac{1}{\alpha} + 1}.
  \end{equation}
  Put
  \begin{equation}
    \label{eq:26}
    C_1'
    \=
    |J_0| \cdot |Df|_{\min \{1, \alpha\}} \sum_{m = 0}^\infty \frac{1}{(1 + \varepsilon_1 n)^{\min \{1, \alpha\}\left( \frac{1}{\alpha} + 1 \right)}}
  \end{equation}
  and note that~$C_1'$ is finite because ${\min \{1, \alpha\} \left( \frac{1}{\alpha} + 1 \right) > 1}$.

  Let~$n$ in~$\N$ be given and let~$J$ be a connected component of~$f^{-n}(J_0)$.
  For each~$m$ in ${\{0, \ldots, n - 1\}}$, let~$a_m$ be equal to~$1$ if~${f^m(J) \subseteq J_0}$ and to~$0$ otherwise.
  Denote by~$r$ the number of~$1$'s in the sequence $a_0 \cdots a_m$, by~$s$ the number of blocks of~$0$'s, and by~$\ell_0$, \ldots, $\ell_s$ the lengths of the blocks of~$0$'s.
  We thus have ${r + \ell_1 + \cdots + \ell_s = n}$.
  Combining~\eqref{eq:11} and~\eqref{eq:25}, we obtain for every~$x$ in~$J$
  \begin{equation}
    \label{eq:27}
    Df^n(x)
    \ge
    (1 + \varepsilon_1)^{r \left( \frac{1}{\alpha} + 1 \right)} \prod_{k = 1}^s (1 + \varepsilon_1 \ell_k)^{\frac{1}{\alpha} + 1}
    \ge
    (1 + \varepsilon_1 n)^{\frac{1}{\alpha} + 1}.
  \end{equation}
  This proves the first inequality in~\eqref{eq:24} and implies~\eqref{eq:23}.
  Using the chain rule, \eqref{eq:23}, and the definition~\eqref{eq:26} of~$C_1'$, we obtain for all~$x$ and~$y$ in~$J$
  \begin{multline}
    \label{eq:28}
    \left| \log \frac{Df^n(x)}{Df^n(y)} \right|
    \le
    \sum_{m = 0}^{n - 1} |\log Df(f^m(x)) - \log Df(f^m(y))| 
    \\
    \le
    |Df|_{\min \{1, \alpha\}} \sum_{m = 0}^{n - 1} |f^m(J)|^{\min \{1, \alpha\}}
     \le
    C_1'.
  \end{multline}
  This proves~\eqref{eq:24} with ${C_1 = \exp(C_1')}$.
  \endproof

\subsection{Bounded variation ergodic sums and expansion at zero for potential in $\Sspace$}
\label{ss:basic lemmas}

\proof[Proof of Lemma \ref{l:sarig class is loc holder}]
Let $\gamma$ be in $(0,+\infty)$, and let $\varepsilon_1 \in (0,1)$ be given by Lemma  \ref{l:geometric-distortion}. 
Put $\gamma_0\=\min\{1,\gamma\}$.
By \eqref{eq:10}, there is  a constant $K \ge 1$ such that for every $n$ in $\N$ we have
\begin{equation}
\label{e:100}
x_{n}^{\gamma_0-1} \le K \frac{1}{n^{\frac{\gamma_0-1}{\alpha}}}.
\end{equation}
Put 
\begin{equation}
D \=  K \gamma \varepsilon_1^{-(\alpha^{-1}+1)}\sum_{j=1}^{+\infty}  \frac{1}{j^{1+ \frac{\gamma_0}{\alpha}}}.
\end{equation} 
Fix $\varphi$ in $\Sspace$, an integer $n\ge 1$, a connected component $J$ of $f^{-n}(J_0)$, and $x$ and $y$ in $J$. 
Observe that for every $j$ in $\{0,\ldots, n-1\}$ we have $f^j(J)\subseteq (x_{n-j+1}, 1]$. 
Together with \eqref{e:100}, this implies that for $j$ in $\{0,\ldots, n-1\}$ we have
\begin{equation}
\sup_{u\in f^j(J)} |\varphi'(u)| \le  |\varphi|_{1,\gamma} \sup_{u\in f^j(J)} \gamma u^{\gamma-1}
 \le |\varphi|_{1,\gamma} \frac{   K \gamma }{(1+n-j)^{\frac{\gamma_0-1}{\alpha}}}.
 \end{equation}
Using 
Lemma \ref{l:geometric-distortion}\eqref{eq:23} we get
\begin{equation}
\left(\sup_{u\in f^j(J)} |\varphi'(u)|\right) |f^j(x) - f^j(y)| \le|\varphi|_{1,\gamma}  K\gamma\varepsilon_1^{-(\alpha^{-1}+1)} \frac{1}{(1+n-j)^{1+\frac{\gamma_0}{\alpha}}}.
\end{equation}
Therefore,
\begin{equation}
\begin{split}
 |S_n\varphi(x)-S_n\varphi(y)| & \le \sum_{j=0}^{n-1} |\varphi(f^j(x))-\varphi(f^j(y))| 
 \\
 & \le   \sum_{j=0}^{n-1} \left(\sup_{u\in f^j(J)} |\varphi'(u)|\right) |f^j(x) - f^j(y)| 
\\
& \le  D|\varphi|_{1,\gamma}.
\end{split}
\end{equation} 
This concludes the proof of the lemma.
\endproof

The following proof is quite simple, and we included it for completeness. 

\proof[Proof of Lemma \ref{l:potential form}]
If $\varphi$ belongs to $\Sspace$, put $c\=\lim_{x\to 0^+}\frac{\varphi'(x)}{\gamma x^{\gamma-1}}$, and define the function $h:(0,1]\to \R$ by
$$
h(x)\= \frac{\varphi(x)-\varphi(0)}{x^\gamma} -c. 
$$
By the definition of $c$, for every $\varepsilon>0$, there is $\delta>0$ such that for every $x$ in $(0,\delta)$ we have
\begin{equation}
(c-\varepsilon)\gamma x^{\gamma-1} \le \varphi'(x) \le (c+\varepsilon) \gamma x^{\gamma-1}.
\end{equation}
Then,
\begin{equation}
(c-\varepsilon) x^{\gamma} \le \varphi(x)-\varphi(0) \le (c+\varepsilon) x^{\gamma}.
\end{equation}
It follows that 
\begin{equation}
\label{e: h at 0}
\lim_{x\to 0^+} h(x) = 0,
\end{equation}
so the function $h$ extends continuously to $[0,1]$. Denote the extension by $h$ and note that $h(0)=0$. Together with the definition of $h$, for every $x$ in $[0,1]$, we get that
\begin{equation}
\label{e: potential expansion}
\varphi(x)=\varphi(0)+cx^\gamma+h(x)x^\gamma.
\end{equation}
Since $h$ is continuously differentiable on $(0,1]$, we have that $h$  is in $C_\dag^1(\R)$. Finally, from \eqref{e: h at 0} and \eqref{e: potential expansion}, we get that
\begin{equation}
\lim_{x\to 0^+}h'(x)x =0.
\end{equation}
The reverse implication follows by direct computation. So, the lemma is proved.
\endproof

\subsection{Continuous extension and topological exactness}
\label{ss:extension}
In this subsection, we construct a continuous extension of the dynamical system $([0,1],f)$, which allows the application of the theory of thermodynamics formalism on compact metric spaces. We introduce the following notation for continuous dynamical systems on compact metric spaces. Given a compact metric space $Z$ and a continuous transformation $T: Z\to Z$, we denote by $\sM_T$ the space of \textsc{Borel} probability measures invariant by $T$. For  every $\mu$ in $\sM_T$ we denote by $h_\mu(T)$ the entropy of $\mu$, and given a continuous potential $\psi:Z\to \R$
we denote by $P_T(\psi)$ the pressure for the potential $\psi$ of $T$. 

\begin{lemm}
\label{l:extension}
Put~$Y\=\bigcup_{n\in \N_0} f^{-n}(x_1)$. There is a totally ordered set~$X$, endowed with the order topology, and a continuous surjective map~$\pi: X\to [0,1]$ such that the following hold.
\begin{itemize}
\item[1.] The sets $X\setminus \pi^{-1}(Y)$  and $ [0,1]\setminus Y$ are equal as totally ordered sets, the map $\pi$ is the identity on $X\setminus \pi^{-1}(Y)$,   and $\pi^{-1}(Y)$ consists of two disjoint copies $Y^-$ and $Y^+$ of $Y$ such that 
for every  $y$  in  $Y$ one has $\pi^{-1}(\{y\})=\{y^-,y^+\}$ and   $y^- < y^+$.
\item[2.] The order topology on $X$ is compact and metrizable.
\item[3.] There is continuous map $\tmpf:X\to X$ such that 
 $\pi \circ \tmpf = \mpf \circ \pi,$
 \[
Y^-=\bigcup_{n\in \N} \tmpf^{-n}(1) \setminus \{1\} = \bigcup_{n\in \N} \tmpf^{-n}(x_1^-), \text{ and }Y^+=\bigcup_{n\in \N} \tmpf^{-n}(0)\setminus \{0\} = \bigcup_{n\in \N} \tmpf^{-n}(x_1^+).
\]
\item[4.] Put~$\tP_0\= [0,x_1^-]$ and~$\tP_1\=[x_1^+,1]$. The map~$\tpi:X\to \{0,1\}^{\N_0}$ given  by $(\tpi (x))_k=i$ 
if $\tmpf^k(x)\in \tP_i$ is a topological conjugacy from
 $(X,\tmpf)$  to the full shift $(\{0,1\}^{\N_0},\sigma)$.
\item[5.] For every invariant measure $\nu$ for $\mpf$ there is unique invariant measure $\mu$ for $\tmpf$ such that $\pi_* \mu =\nu$ and the systems $(X,\tmpf,\mu)$ and  $([0,1],f,\nu)$ are isomorphic in measure. In particular, the map $\pi_*:\sM_{\tmpf}\to \sM$ is one-to-one.
\end{itemize}
\end{lemm}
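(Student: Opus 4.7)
The plan is to carry out the standard ``doubling construction'' that extends the piecewise monotone interval map~$\mpf$---continuous on $[0,x_1]$ and on $(x_1,1]$, with left and right limits $\mpf(x_1^-)=1$ and $\mpf(x_1^+)=0$ at its unique discontinuity---to a continuous map on a compact metric space by splitting each point of~$Y$ into a left copy and a right copy.

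First I would define~$X$ set-theoretically as the disjoint union $([0,1]\setminus Y)\sqcup Y^-\sqcup Y^+$ with the obvious projection~$\pi$, and extend the total order on $[0,1]$ to~$X$ by declaring $y^-$ to be the immediate predecessor of~$y^+$ for every $y\in Y$ and otherwise comparing via~$\pi$; item~1 is then immediate. For item~2, the key observation is that~$X$ is a complete lattice (every nonempty subset~$S$ admits a supremum obtained from $s\=\sup\pi(S)\in[0,1]$ by choosing $s^+$, $s^-$, or~$s$ itself depending on how~$S$ accumulates on~$s$), hence compact in the order topology by the standard characterization of order-complete linearly ordered spaces. For metrizability, I would exhibit a countable basis of open order-intervals with endpoints in a countable set containing $Y^-\cup Y^+$ together with a countable dense subset of $[0,1]\setminus Y$; compactness, Hausdorffness and a countable basis then yield metrizability via Urysohn. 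Continuity and surjectivity of~$\pi$ are immediate from the definitions.

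Next I would define $\tmpf\colon X\to X$ exploiting the fact that each branch $\mpf|_{[0,x_1]}$ and $\mpf|_{[x_1,1]}$ is an orientation-preserving homeomorphism onto $[0,1]$. On $X\setminus\pi^{-1}(Y)$ the map $\tmpf$ agrees with $\mpf\circ\pi$ (noting that $\mpf(\pi(x))\notin Y$ here by backward invariance of~$Y$), and on $\pi^{-1}(Y)$ I set $\tmpf(y^-)\=\mpf(y)^-$ and $\tmpf(y^+)\=\mpf(y)^+$ for every $y\in Y\setminus\{x_1\}$---with the convention $z^\pm\=z$ when $z\notin Y$---together with $\tmpf(x_1^-)\=1$ and $\tmpf(x_1^+)\=0$. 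Continuity is verified by checking that one-sided limits match at each point of $\pi^{-1}(Y)$, and the relation $\pi\circ\tmpf=\mpf\circ\pi$ is immediate. The identifications in item~3 follow by induction on~$n$: by the orientation convention, $z^-\in\tmpf^{-n}(1)$ if and only if $\mpf^n(z)=1$, i.e., $\mpf^{n-1}(z)=x_1$ for $n\ge 1$, and symmetrically for~$z^+$. For item~4, I set $\tP_0\=[0,x_1^-]$ and $\tP_1\=[x_1^+,1]$; these are clopen in~$X$ because $x_1^-$ and~$x_1^+$ are consecutive. The itinerary map~$\tpi$ is continuous by construction, surjective because each finite word yields a nonempty compact cylinder (using that each branch of~$\mpf$ is onto $[0,1]$---the Markov property), and injective because any two distinct points of~$X$ are separated by some element of~$Y^-\cup Y^+$ (since~$Y$ is dense in $[0,1]$, a consequence of the expansion estimate in Lemma~\ref{l:geometric-distortion}) and therefore have distinct itineraries after finitely many iterates. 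A continuous bijection between compact Hausdorff spaces is a homeomorphism, which gives item~4.

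For item~5, I would argue that every $\mpf$-invariant probability measure~$\nu$ satisfies $\nu(Y)=0$: every $y\in Y$ satisfies $\mpf^k(y)=x_1$ for some $k\ge 0$ and then $\mpf^j(y)=1$ for all $j>k$, so the forward orbit of any point of~$Y$ leaves~$Y$ after finitely many steps and never returns; by Poincar\'e recurrence, $\nu(Y)=0$. Since~$\pi$ restricts to a measurable bijection between $X\setminus\pi^{-1}(Y)$ and $[0,1]\setminus Y$ that intertwines the two dynamics, the assignment sending~$\nu$ to the measure~$\mu$ obtained by lifting $\nu|_{[0,1]\setminus Y}$ through~$\pi^{-1}$ and extending by zero on $\pi^{-1}(Y)$ is the unique inverse of~$\pi_*$ and realizes the asserted measure-theoretic isomorphism. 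The main obstacle I anticipate is the careful bookkeeping at the countable set of doubled points, especially in the verification of continuity of~$\tmpf$ at every point of $\pi^{-1}(Y)$ and of injectivity of~$\tpi$; both steps require a systematic treatment of how each branch of~$\mpf$ interacts with the left and right copies of points in~$Y$.
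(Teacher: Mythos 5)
Your proposal is correct and follows essentially the same route as the paper: the same doubling construction for~$X$, compactness via order-completeness and metrizability via Urysohn, the same formula for~$\tmpf$, the itinerary-map conjugacy for item~4 (the paper makes the diameter-shrinking of cylinders explicit by building a compatible metric through an embedding $\iota\colon X\to\R$ and invoking Lemma~\ref{l:generating partition}, whereas you argue injectivity via separation by $Y^-\cup Y^+$, but both rest on the same expansion estimates), and the same observation that~$Y$ is $\nu$-null for item~5. No gaps.
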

The proof of Lemma \ref{l:extension} is given after the following couple of lemmas. 
Put~$\cP\=\{[0,x_1],(x_1,1]\}$ and denote by $g_0$ and $g_1$  the inverse branches of $f$ on $[0,x_1]$ and $(x_1,1]$, respectively. 
In general, given two partitions $\sP$ and $\sQ$ of some set $Z$ we denote by $\sP\vee \sQ$ the refinement of the both partitions defined by 
\begin{equation}
\sP\vee \sQ \= \{P\cap Q \colon P\in \sP,Q\in \sQ\}.
\end{equation}
\begin{lemm} 
\label{l:inverse branches} 
For all $x$  in $(0,1]$, $n$ in $\N$ and $y\in \mpf^{-n}(x)$ there is a unique inverse branch $\phi$ of $\mpf^n|_{(0,1]}$ defined on $(0,1]$  such that $y\in \phi((0,1])$.  For every $n$ in $\N$ there is a unique inverse branch $\phi$ of $\mpf^n$ defined on $[0,1]$  such that $\phi([0,1])=[0,x_n]$.
In particular, for every $n$ in~$\N$ and every $Q$ in $\bigvee_{k=0}^{n-1} \mpf^{-k}\cP$ we have that $f^n$ sends $Q$ diffeomorphically onto $(0,1]$ except when $0$ belongs to $Q$ in which case $f^n$ sends $Q$ diffeomorphically onto $[0,1]$. 
\end{lemm}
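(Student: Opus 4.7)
The plan is to prove the three assertions by induction on $n$, using the two inverse branches $g_0\colon[0,1]\to[0,x_1]$ and $g_1\colon(0,1]\to(x_1,1]$ of $f$ together with the observation that $f^{-1}(0)=\{0\}$.

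For the first statement, the base case $n=1$ is immediate: for $x\in(0,1]$ the fiber $f^{-1}(x)$ equals $\{g_0(x),g_1(x)\}$ with $g_0(x)\in(0,x_1]$ and $g_1(x)\in(x_1,1]$, so exactly one of $g_0,g_1$ has $y$ in its image. For the inductive step, given $y\in f^{-n}(x)$ with $n\ge 2$, set $x'\=f(y)\in f^{-(n-1)}(x)$; by the inductive hypothesis there is a unique inverse branch $\phi_{n-1}$ of $f^{n-1}|_{(0,1]}$ defined on $(0,1]$ with $\phi_{n-1}(x)=x'$. Since $x'\in(0,1]$, the base case applied at $x'$ produces a unique $i\in\{0,1\}$ with $g_i(x')=y$, and then $g_i\circ\phi_{n-1}$ is an inverse branch of $f^n|_{(0,1]}$ whose image contains $y$. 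For uniqueness, any such inverse branch $\phi$ satisfies that $f\circ\phi$ is an inverse branch of $f^{n-1}|_{(0,1]}$ whose image contains $f(y)=x'$, so $f\circ\phi=\phi_{n-1}$ by induction, forcing $\phi=g_i\circ\phi_{n-1}$.

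For the second statement, observe that on $[0,x_1]$ the equation $x(1+x^\alpha)=0$ has $x=0$ as its only solution, while on $(x_1,1]$ the value $x(1+x^\alpha)-1$ is strictly positive; hence $f^{-1}(0)=\{0\}$ and by induction $f^{-n}(0)=\{0\}$. Any inverse branch $\phi$ of $f^n$ defined on $[0,1]$ must therefore satisfy $\phi(0)=0$, so a neighborhood of $0$ in the image of $\phi$ lies in $[0,x_1]$; this forces the first step of $\phi$ to be $g_0$, and iterating gives $\phi=g_0^n$. The recursion $g_0([0,x_{n-1}])=[0,x_n]$ then yields $g_0^n([0,1])=[0,x_n]$, establishing both existence and uniqueness. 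Finally, for the ``in particular'' statement, each element $Q$ of $\bigvee_{k=0}^{n-1}f^{-k}\cP$ is determined by its itinerary $(i_0,\ldots,i_{n-1})\in\{0,1\}^n$ recording whether $f^k(Q)\subseteq[0,x_1]$ or $f^k(Q)\subseteq(x_1,1]$: if $0\in Q$ then the itinerary is identically $0$, $Q=[0,x_n]$, and $f^n$ maps $Q$ diffeomorphically onto $[0,1]$ with inverse $g_0^n$; otherwise $Q$ is the image of the unique inverse branch of $f^n|_{(0,1]}$ associated to that itinerary by the first statement, so $f^n|_Q$ is a diffeomorphism onto $(0,1]$.

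I do not foresee a serious obstacle: the entire argument is bookkeeping based on the two-branch structure of $f$ and the fact that $0$ has a single preimage. The only mildly delicate points are tracking carefully which domain ($[0,1]$ versus $(0,1]$) each inverse branch is defined on so that compositions involving $g_1$ remain well-defined, and verifying that the $\cP$-itinerary of a point $y$ determines the inverse branch of $f^n|_{(0,1]}$ through $y$.
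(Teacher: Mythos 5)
Your proof is correct and uses essentially the same ideas as the paper's: the key facts are that every inverse branch of $f^n$ is a composition of the two branches $g_0$ (defined on $[0,1]$, image $[0,x_1]$) and $g_1$ (defined on $(0,1]$, image $(x_1,1]$), that the images of distinct compositions are disjoint and cover, and that $f^{-1}(0)=\{0\}$. The paper states the composition form $g_{i_1}\circ\cdots\circ g_{i_n}$ and its consequences directly without the formal inductive scaffolding you supply, but the substance is identical.
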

\proof
For every $n$ in $\N$, every inverse branch of $\mpf^n$ has the form $g_{i_1}\circ \cdots \circ g_{i_n}$. When at least one of the $g_{i_j}$ is equal to $g_1$, the domain of the inverse branch is $(0,1]$ and the image is an interval of the form $(a,b]$ with $a$ and $b$ in  
$\bigcup_{k=0}^{n} f^{-k}(1)$. 
When all the $g_{i_j}$ are equal to $g_0$, the domain of the inverse branch is $[0,1]$ and the image  is $[0,x_n].$
In particular, the images of two different inverse branches of $\mpf^n$ are disjoint, and the union of the images of all inverse branches of 
$\mpf^n$ covers $[0,1]$. On the other hand, since $\{0\}=\mpf^{-1}(0)$, for all 
$x$ in $(0,1]$,  $n$ in $\N$ and  $y$ in $\mpf^{-n}(x)$ we have $y\neq 0$. Therefore, there is a unique inverse branch $\phi$ of $\mpf^n|_{(0,1]}$ defined on $(0,1]$  such that $y\in \phi((0,1])$, and there is a unique inverse branch of $\mpf^n$ defined on $[0,1]$ whose image is 
$[0,x_n]$. 
\endproof

Let $Z$ be a topological space and $T: Z \to Z$ be a map. One says that $T$ is \emph{topologically exact} if for every $z$ in $Z$ and every neighborhood $V$ 
of $z$ there is $k$ in $\N$ such that $T^k(V)=Z$.

\begin{lemm}
\label{l:generating partition} We have that 
\begin{equation}
\label{e:generating}
 \max_{Q \in \bigvee_{k=0}^{n-1} \mpf^{-k}\cP} \diam{Q} \to 0
\end{equation}
as $n$ goes to $+\infty$. In particular, the map $f$ is topologically exact on $(0,1]$.
\end{lemm}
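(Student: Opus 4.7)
The plan is to prove the diameter convergence by classifying each element $Q$ of $\bigvee_{k=0}^{n-1} f^{-k}\cP$ according to when its orbit last visits $J_0$, and then to derive topological exactness from the diameter bound through Lemma~\ref{l:inverse branches}.

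For the diameter estimate, fix $\varepsilon>0$. Using Lemma~\ref{l:indifferent-branch'} and Lemma~\ref{l:geometric-distortion}, I would choose $N$ so large that $x_N<\varepsilon$ and $|J_0|/(1+\varepsilon_1 N)^{\frac{1}{\alpha}+1}<\varepsilon$, and then fix $n\ge 2N+1$. For an arbitrary element $Q$ of the refined partition, Lemma~\ref{l:inverse branches} provides a dichotomy: either $Q=[0,x_n]$, or $Q$ is the image of an inverse branch of $f^n|_{(0,1]}$. In the first case $|Q|=x_n\le x_N<\varepsilon$. In the second case, at least one $k\in\{0,\ldots,n-1\}$ satisfies $f^k(Q)\subseteq J_0$; let $k^*$ denote the \emph{largest} such index, and split according to whether $k^*\ge N$ or $k^*<N$. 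If $k^*\ge N$, then $Q$ lies in a connected component of $f^{-k^*}(J_0)$, and Lemma~\ref{l:geometric-distortion} yields $|Q|\le |J_0|/(1+\varepsilon_1 N)^{\frac{1}{\alpha}+1}<\varepsilon$.

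If $k^*<N$, I would exploit its maximality: for every $j\in\{k^*+1,\ldots,n-1\}$ we have $f^j(Q)\subseteq[0,x_1]$, and iterating this containment $n-1-k^*$ times forces $f^{k^*+1}(Q)\subseteq[0,x_{n-1-k^*}]$. Since $n-1-k^*>N$, this image has length at most $x_N<\varepsilon$. Because $f^{k^*+1}$ is a diffeomorphism on $Q$ with $(f^{k^*+1})'\ge 1$ (a consequence of the pointwise bound $Df(x)=1+(\alpha+1)x^\alpha\ge 1$), this transfers back to $|Q|\le |f^{k^*+1}(Q)|<\varepsilon$. Combining the three cases proves that the partition diameter tends to zero.

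For topological exactness, I would apply the diameter convergence directly. Given $x\in(0,1]$ and an open neighborhood $V$ of $x$ in $(0,1]$, pick $\delta>0$ with $(x-\delta,x+\delta)\cap(0,1]\subseteq V$, and choose $n$ so large that every element of $\bigvee_{k=0}^{n-1} f^{-k}\cP$ has diameter less than $\min(\delta,x)$. The element $Q$ containing $x$ then lies inside $V$ and does not contain $0$; hence $Q\neq[0,x_n]$, and Lemma~\ref{l:inverse branches} yields $f^n(V)\supseteq f^n(Q)=(0,1]$.

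The main obstacle is the case $k^*<N$: the bounded distortion estimate alone provides no contraction when $k^*$ is small. The key observation is that if the last visit to $J_0$ occurs early, then the remaining $n-1-k^*$ iterations force the image of $Q$ deep into the basin of the indifferent fixed point, and the monotonicity $Df\ge 1$ lets this smallness be pulled back to $Q$ itself without any loss.
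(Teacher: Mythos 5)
Your proof is correct and follows essentially the same strategy as the paper's: classify $Q$ by the last time the orbit visits $J_0$, apply the bounded distortion estimate of Lemma~\ref{l:geometric-distortion} when that visit is late, and when it is early use the containment $f^{k^*+1}(Q)\subseteq[0,x_{n-1-k^*}]$ together with $Df\ge 1$ to pull the smallness back to $Q$; the only cosmetic difference is that the paper places the cutoff at $n/2$ rather than at a pre-chosen $N$, which yields an explicit polynomial decay rate but is not needed for the qualitative conclusion. The topological exactness argument is also the same, and your additional observation that the diameter bound forces $0\notin Q$ correctly sidesteps the exceptional element $[0,x_n]$ that the paper handles by passing to $Q\setminus\{0\}$.
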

\proof 
Notice that by \eqref{eq:10}  there is $C>0$ such  that for every $n$ in $\N$ one has
\begin{equation}
\label{e:x_n}
x_n \le C n^{-\frac{1}{\alpha}}.
\end{equation}
Let $n$ be in $\N$ and let  $Q$  be in $\bigvee_{k=0}^{n-1} \mpf^{-k}\cP$. By Lemma \ref{l:inverse branches},  
there is $i_1\cdots i_n$ in $\{0,1\}^n$ such that  $Q=g_{i_1}\circ \cdots \circ g_{i_n}((0,1])$ or $Q=g_0^n([0,1])=[0,x_n]$.
If there is an integer~$j\in  [\frac{n}{2},n]$ such that~$i_j=1$ then by Lemma \ref{l:geometric-distortion} we have 
\begin{equation}
\label{e:diam Q}
\diam{Q} \le |J_0|(1+\varepsilon_1 j )^{-\left(\frac{1}{\alpha}+1\right)}
 \le |J_0|\left(1+\varepsilon_1 \left\lceil \frac{n}{2} \right\rceil \right)^{-\left(\frac{1}{\alpha}+1\right)} .
 \end{equation}
 If for every integer~$j\in  [\frac{n}{2},n]$ we have that~$i_j=0$ then
 \begin{equation}
 f^{\lfloor \frac{n}{2} \rfloor}(Q)\subseteq [0,x_{n-\lceil \frac{n}{2} \rceil}].
 \end{equation}
By \eqref{e:x_n} and the fact  for every $x$ in $[0,1]$ one has $Df(x)\ge 1$ we get that 
 \begin{equation}
 \diam{Q} \le \diam{f^{\lfloor \frac{n}{2} \rfloor}(Q)} \le  x_{n-\lceil \frac{n}{2}\rceil} \le C \left \lfloor \frac{n}{2} \right\rfloor^{-\frac{1}{\alpha}}.
 \end{equation}
 Together with \eqref{e:diam Q}, this implies the first part of the lemma. 
 
 Now, we prove that $f$ is topologically exact on $(0,1]$. Observe that, by \eqref{e:generating}, for every open set $V$ contained in $(0,1]$ there are $n$ in $\N$ and $Q$ in $\bigvee_{k=0}^{n-1} \mpf^{-k}\cP$ such that $Q\setminus\{0\}\subset V$. By Lemma \ref{l:inverse branches},  we have $(0,1] = f^n(Q\setminus\{0\}) \subset f^n(V)$, which implies that $f$ is topologically exact on $(0,1]$ and concludes the proof of the lemma.
\endproof

\proof[Proof of Lemma \ref{l:extension}]
 The idea of the construction of the extension is well known. We follow  \cite[Appendix A.5]{Kel98}. 
  The set $X$ is obtained from  $[0,1]$ by doubling the points in $Y$. Thus, every $y$ in $Y$ is replaced by two points $y^-$ and $y^+$ and one declares that $y^-<y^+$. This endows $X$ with a total order. The order topology on $X$ is the topology generated by open order intervals and the intervals of the form $[0,b)$ and $(a,1]$ for all $a$ and $b$ in $X$.
 Denote by $\pi: X\to [0,1]$ the projection. Observe that $\pi$ is continuous since the preimages of every open interval in $[0,1]$ is an open order interval in $X$. This proves the first statement of the lemma and item 1.
 
Observe that the total order of $X$  has the least upper bound property, so the order topology on $X$ is compact; see, for instance, 
\cite[Theorem 27.1]{Mun00}. Also, observe that the order topology is Hausdorff, which, together with the compactness, implies that $X$ is regular. Since it is second countable, by the Urysohn Metrization Theorem, see, for instance, \cite[Theorem 34.1]{Mun00}, the order topology on $X$ is metrizable, proving item 2.

The map $\tmpf$ coincides with $\mpf$ on $[0,1]\setminus Y$ and  on $y^-$ and 
$y^+$ is defined by continuity. Thus~$\tmpf(x_1^-)=1$ and~$\tmpf(x_1^+)=0$, and for every~$y$ in~$Y$ different from~$x_1$ 
we have~$\tmpf(y^+)=\mpf(y)^+$ and $\tmpf(y^-)=\mpf(y)^-$. 
Since $\tmpf: [0,x_1^-]\to \tmpf([0,x_1^-])$ and $\tmpf: [x_1^+,1]\to \tmpf([x_1^+,1])$ are increasing bijections, $\tmpf$ is continuous.
From the definition of $X$ and $\tmpf$, we have that $\pi \circ \tmpf = \mpf \circ \pi$, which finishes the proof of item 3.

To show item 4, we first define a distance on $X$ compatible with the topology.  
Define an atomic measure $\lambda$ by 
\begin{equation}
\lambda \= \sum_{n=0}^{+\infty} \sum_{y\in f^{-n}(x_1)} 4^{-n}\delta_y,
\end{equation}
and the increasing maps $\iota^-, \iota^+: [0,1]\to \R$ by 
\begin{equation}
\iota^-(x)\= x +\lambda([0,x)) \text{ and } \iota^+(x)\= x +\lambda([0,x]).
\end{equation}
Observe that 
\begin{align*}
& \iota^+(x) <  \iota^-(x'), \text{ for } x < x',\\
& \iota^-(x) =  \iota^+(x), \text{ for } x  \in [0,1]\setminus Y,\\
& \iota^+(y) =  \iota^-(y) + 4^{-n}, \text{ for } y \in  f^{-n}(x_1).
\end{align*}
We define $\iota:X\to \R$ by 
\begin{equation}
\iota(x) = 
\begin{cases}
\iota^+(x), & \text{ if } x \in  X \setminus Y^-,\\
\iota^-(x), & \text{ if } x \in  Y^-.
\end{cases}
\end{equation}
Observe that  $\iota(X)$  is closed in  $\R$, so its order topology coincides with the induced topology from $\R$. Furthermore,  
$\iota : X \to \iota(X)$  is an increasing bijection and thus a homeomorphism.
For all $x$ and $x'$ in $X$ define the distance
\begin{equation}
d(x,x') \= 
|\iota(x)-\iota(x')|.
\end{equation}
Put $\tY \= \pi^{-1}(Y)$. Now, we prove that the map $\tpi$ in item 4 is a conjugacy.
For every $n$ in $\N$, and every word $w$ on the alphabet $\{0,1\}$ of length $n$ denote by $[w]$ the cylinder in 
$\{0,1\}^{\N_0}$ that has the word $w$ in the first $n$ coordinates. We have that $\tpi^{-1}([w])$ is an interval $\tJ$ of the form $[a^+,b^-], [0,b^-]$ 
or $[a^+,1]$ with $a$ and $b$ in $Y$.  Since $[a^+,b^-] = (a^-,b^+), [0,b^-] = [0,b^+)$ 
and $[a^+,1]=(a^-,1]$ we get that 
$\tpi$ is continuous.  By Lemma \ref{l:inverse branches},  each of the intervals
 $(a,b], (0,b]$ or $(a,1]$ is the image  of $(0,1]$ by an inverse branch of $f^n|_{(0,1]}$, and denote by $J$ any of these intervals. 
 Since there is no other point of 
$\pi^{-1}(\bigcup_{k=0}^{n-1}f^{-k}(x_1))$ in $\tJ$ we have that
\begin{equation}
\text{diam}(\tJ) \le \text{diam}(J)+2^{-n}.
\end{equation}
By Lemma \ref{l:generating partition}, we have that $\text{diam}(J)\to 0$ as the length of $w$ goes to $+\infty$. Then, 
for every~$\omega$ in $\{0,1\}^{\N_0}$ we have that there is a unique~$x$ in~$X$ such that~$\tpi(x)=\omega$. Since $X$ is compact, we get that $\tpi$ is a homeomorphism, and since from the definition of  $\tpi$ we have $\tpi\circ \tmpf = \mpf \circ \sigma$, we conclude that the map~$\tpi$ is a conjugacy. 

Observe that a subset $B$ of $X$ is the same as the set $\pi(B)$ in $[0,1]$ with the points in $\pi(B)\cap Y$ duplicated. Then the \textsc{Borel} $\sigma$-algebra of $X\setminus \tY$ is the same than the \textsc{Borel} $\sigma$-algebra of $[0,1]\setminus Y$.
Since the set $Y$ does not support any invariant \textsc{Borel} probability for $\mpf$, the  same holds for  
$\tY$ and $\tmpf$. We deduce that for every invariant \textsc{Borel} probability measure~$\mu$ for~$\tmpf$ we have that~$\pi_*\mu$  is equal to the extension to the \textsc{Borel}~$\sigma$-algebra of~$[0,1]$ of the restriction of~$\mu$ to 
the \textsc{Borel}~$\sigma$-algebra~$X\setminus \tY$, which coincides with the \textsc{Borel}~$\sigma$-algebra of~$[0,1]\setminus Y$.  On the other hand, for every invariant \textsc{Borel} probability measure $\nu$ for $\mpf$, there is a unique $\mu$ in $\sM_{\tmpf}$ such that $\pi_* \mu = \nu$. Therefore, the dynamical systems $(X,\tmpf,\mu)$ and $([0,1],\mpf,\nu)$ are measurably isomorphic. This finishes the proof of item 5 and the proof of the lemma.
\endproof

\subsection{Topological and tree pressures}
\label{ss:pressure formulas}
Let $\psi:[0,1] \to \C$ be a function. For every $n$ in $\N$, put $S_{n}\psi \= \sum_{k=0}^{n-1}\psi\circ \mpf^k$.

\begin{lemm}
\label{l:top pressure}
For every continuous potential~$\varphi:[0,1]\to \R$, the following properties hold. 
\begin{itemize}
\item[1.] We have
\begin{equation}
\label{e:top pressure}
 \Pf(\varphi)= \lim_{n\to +\infty} \frac{1}{n} \log \sum_{Q\in \bigvee_{k=0}^{n-1} \mpf^{-k}\cP } \sup_{x\in Q}\exp(S_n \varphi (x)).
\end{equation}
In particular, $P(0)= \log 2$.
\item[2.] For every $y\in (0,1]$ we have 
\begin{equation}
\label{e:top pressure = tree pressure}
\Pf(\varphi)= \lim_{n\to +\infty} \frac{1}{n} \log \sum_{x\in \mpf^{-n}(y)} \exp(S_n \varphi (x)).
\end{equation}
\end{itemize}
\end{lemm}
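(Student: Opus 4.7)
The plan is to reduce everything to the continuous extension $(X, \tmpf)$ from Lemma~\ref{l:extension}, which by Lemma~\ref{l:extension}(4) is topologically conjugate via~$\tpi$ to the full shift on two symbols. Setting $\tvarphi \= \varphi \circ \pi$, the isomorphism-in-measure of $([0,1], \mpf, \nu)$ and $(X, \tmpf, \mu)$ (with $\nu = \pi_*\mu$) from Lemma~\ref{l:extension}(5), together with the bijection $\pi_* \colon \sM_{\tmpf} \to \sM$, yields the identity $\Pf(\varphi) = P_{\tmpf}(\tvarphi)$ at the level of variational pressures, where the right-hand side is the standard topological pressure of the continuous potential $\tvarphi$ on the compact metric space $X$.

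For part~1, I would apply the standard partition formulation of topological pressure using the clopen partition $\tilde{\cP} \= \{\tP_0, \tP_1\}$ of $X$. By Lemma~\ref{l:extension}(4), this partition corresponds via $\tpi$ to the time-zero generator of the full shift, so $\bigvee_{k=0}^{n-1}\tmpf^{-k}\tilde{\cP}$ is in bijection with the $n$-cylinders of the shift and consists of $2^n$ elements whose diameters tend to zero uniformly (because $\tpi$ is a homeomorphism between compact spaces). Standard thermodynamic formalism on a compact metric space with a generating clopen partition then gives
\begin{equation*}
P_{\tmpf}(\tvarphi) = \lim_{n \to +\infty} \frac{1}{n} \log \sum_{\tilde{Q} \in \bigvee_{k=0}^{n-1}\tmpf^{-k}\tilde{\cP}} \sup_{\tilde{x} \in \tilde{Q}}\exp(S_n\tvarphi(\tilde{x})).
\end{equation*}
On the $[0,1]$ side, the partition $\bigvee_{k=0}^{n-1} \mpf^{-k}\cP$ also has exactly $2^n$ elements, indexed by words in $\{0,1\}^n$ (cf.\ Lemma~\ref{l:inverse branches}); each $\tilde{Q}$ projects under $\pi$ onto the closure of the corresponding $Q$, and since $Q$ is dense in its closure while $\varphi$ is continuous, the suprema agree. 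This yields~\eqref{e:top pressure}, and taking $\varphi \equiv 0$ the sum equals $2^n$, whence $\Pf(0) = \log 2$.

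For part~2, fix $y \in (0,1]$. By Lemma~\ref{l:inverse branches}, each $Q \in \bigvee_{k=0}^{n-1}\mpf^{-k}\cP$ contains a unique preimage $x_Q(y) \in \mpf^{-n}(y)$, hence
\begin{equation*}
\sum_{x \in \mpf^{-n}(y)} \exp(S_n\varphi(x)) = \sum_Q \exp(S_n\varphi(x_Q(y))) \le \sum_Q \sup_{x \in Q} \exp(S_n\varphi(x)),
\end{equation*}
so part~1 already gives the inequality $\limsup_n \tfrac{1}{n} \log \sum_{x \in \mpf^{-n}(y)} \exp(S_n\varphi(x)) \le \Pf(\varphi)$. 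For the reverse inequality, the crux is to show that the oscillation of $S_n\varphi$ on each $Q$ is $o(n)$. I would obtain this from the extension: given $\varepsilon > 0$, the uniform continuity of $\tvarphi$ together with the fact that cylinder diameters in $X$ tend to zero yields $N$ such that the oscillation of $\tvarphi$ on any cylinder of length at least $N$ is $\le \varepsilon$. Splitting $S_n\tvarphi = \sum_{k=0}^{n-1}\tvarphi\circ\tmpf^k$ on a cylinder $\tilde{Q}$ of length $n$, bounding the first $n-N+1$ terms by $\varepsilon$ and the remaining $N-1$ terms by $2\|\varphi\|$, produces an oscillation at most $\varepsilon n + 2N\|\varphi\|$ on $\tilde{Q}$, hence the same bound on the corresponding $Q \subseteq [0,1]$. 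Therefore $\sup_Q \exp(S_n\varphi) \le \exp(\varepsilon n + 2N\|\varphi\|)\exp(S_n\varphi(x_Q(y)))$; summing, dividing by $n$, and invoking part~1 gives $\Pf(\varphi) \le \varepsilon + \liminf_n \tfrac{1}{n}\log\sum_{x \in \mpf^{-n}(y)}\exp(S_n\varphi(x))$, and letting $\varepsilon \to 0$ closes the argument.

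The main obstacle is careful bookkeeping between the two sides: one must verify that the doubled points $\pi^{-1}(Y)$ do not disturb the bijection between partition elements, the counting of preimages, or the identification of suprema---in particular that the ``all-zero'' element $[0, x_n] \in \bigvee_{k=0}^{n-1}\mpf^{-k}\cP$ still contains a unique preimage of any $y \in (0,1]$ under $\mpf^n$ (which Lemma~\ref{l:inverse branches} guarantees). Once this is tidied up, both assertions reduce to familiar facts about topological pressure on the full shift, and no further thermodynamic input is required.
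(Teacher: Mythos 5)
Your proof is correct and follows essentially the same route as the paper: reduce to the continuous extension $(X,\tmpf)$ from Lemma~\ref{l:extension}, use the conjugacy to the full shift and the identity $P_{\tmpf}(\tvarphi)=\Pf(\varphi)$ to obtain the partition-sum formula in part~1, and then derive part~2 from part~1 by showing the oscillation of the ergodic sums on partition elements is $o(n)$. The only cosmetic difference is that in part~2 you obtain the $o(n)$ oscillation bound by working with $\tvarphi$ and cylinder diameters in $X$, whereas the paper argues directly with $\varphi$ on $[0,1]$ via Lemma~\ref{l:generating partition}; these are interchangeable. One small point worth being careful about in your part~1 wording: the argument that "the suprema agree" should be phrased using the fact that $S_n\tvarphi$ (not $\varphi$) is continuous on $X$ together with $S_n\tvarphi=S_n\varphi\circ\pi$, since $S_n\varphi$ itself is discontinuous on $[0,1]$; the paper takes exactly this route.
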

\proof Let $X,\pi, \tmpf, Y, Y^-$ and $Y^+$ be as in Lemma \ref{l:extension} and put~$\tvarphi \= \varphi \circ \pi$.

\partn{1} 
From Lemma~\ref{l:extension}(5)  
for every continuous  potential~$\varphi:[0,1]\to \R$ and~$\tvarphi \= \varphi \circ \pi$
we have  
\begin{equation}
\label{e:equal pressure}
P_{\tmpf}(\tvarphi) = \sup_{\mu \in \sM_{\tmpf} } h_\mu(\tmpf)+ 
\int \tvarphi \dd \mu
=  
\sup_{\nu \in \sM } h_\nu + \int \varphi \dd \nu
= \Pf(\varphi).
\end{equation}
Let $\tP_0$ and $\tP_1$ be the sets in Lemma \ref{l:extension}(4). Observe that  $\sC=\{\tP_0,\tP_1\}$ is the open cover of $X$ corresponding to the open cover $\{[0],[1]\}$ in $\{0,1\}^{\N_0}$. Thus, by the Variational Principle (see for instance \cite[2.17. Variational Principle and Lemma 1.20]{Bow08}), we get that 
\begin{equation}
\label{e:vp}
P_{\tmpf}(\tvarphi) = 
\lim_{n\to +\infty} \frac{1}{n} \log \sum_{P\in \bigvee_{k=0}^{n-1} \tmpf^{-k}\sC } \sup_{x\in P}\exp(S_n \tvarphi (x)).
\end{equation}
Notice that by the equation $\pi \circ \tmpf = \mpf \circ \pi$ in item 3 in 
Lemma~\ref{l:extension}, for  every~$n$ in $\N$,  we have $S_n \tvarphi (x)= S_n \varphi (\pi(x))$.
Also observe that every~$Q$ in $\bigvee_{k=0}^{n-1} \mpf^{-k}\cP$ is of the form $[0,b],(a,b]$ or $(a,1]$, and that 
$[0,b^-],[a^+,b^-]$ or $[a^+,1]$ is in $\bigvee_{k=0}^{n-1} \tmpf^{-k}\sC $. 
Together with  the continuity of $\tvarphi$  and $\tmpf$ we get
$$
\sup_{x\in [a^+,b^-]}\exp(S_n \tvarphi (x)) =  \sup_{x\in (a^+,b^-]}\exp(S_n \tvarphi (x)) = \sup_{z\in (a,b]}\exp(S_n \varphi (z)),
$$
and the same holds for $Q$ of the form $[0,b]$ or $(a,b]$. Then,
\begin{equation}
  \lim_{n\to +\infty} \frac{1}{n} \log \sum_{P\in \bigvee_{k=0}^{n-1} \tmpf^{-k}\sC } \sup_{x\in P}\exp(S_n \tvarphi (x))
  = 
\lim_{n\to +\infty} \frac{1}{n} \log \sum_{Q\in \bigvee_{k=0}^{n-1} \mpf^{-k}\cP} \sup_{x\in Q}\exp(S_n \varphi (x)).
\end{equation}
Together with \eqref{e:equal pressure} and \eqref{e:vp}, this implies \eqref{e:top pressure}. Also from  \eqref{e:equal pressure} and Lemma \ref{l:extension}(4), we get that 
$P(0)=P_{\tmpf}(0)=\log 2$. 

\partn{2} Fix $y$ in $(0,1]$. By  Lemma \ref{l:generating partition}, the diameter of the elements in $ \bigvee_{k=0}^{n-1} \mpf^{-k}\cP$ tends  to zero as $n$ goes to $+\infty$. Then,  since $\varphi$ is uniformly continuous on $[0,1]$, for every $\varepsilon>0$ there is $n_0\in \N$ such that for every integer $n\ge n_0$, every $Q\in \bigvee_{k=0}^{n-1} \mpf^{-k}\cP$ and 
every $x_Q\in Q\cap \mpf^{-n}(\{y\})$ we have 
$$
\sup_{z\in Q} |S_{n-n_0}\varphi(z) - S_{n-n_0} \varphi(x_Q)| \le (n-n_0)\varepsilon.
$$
Then, 
\begin{equation}
\label{e:tree and extremes 1}
\exp(S_n \varphi(x_Q)) \le \sup_{z\in Q}\exp(S_n \varphi (z))  
\le \exp((n-n_0)\varepsilon+ n_0 \|\varphi\| ) \exp(S_n \varphi(x_Q)),
\end{equation}
which implies 
\begin{equation}
\begin{split}
\label{e:tree and extremes }
\sum_{x\in \mpf^{-n}(y)} \exp(S_n \varphi (x)) \le & \sum_{Q\in \bigvee_{k=0}^{n-1} \mpf^{-k}\cP} \sup_{x\in Q}\exp(S_n \varphi (x))  \\ 
\le &  \exp((n-n_0)\varepsilon+ n_0 \|\varphi\|) \sum_{x\in \mpf^{-n}(y)} \exp(S_n \varphi (x)).
\end{split}
\end{equation}
Therefore, by item 1, we have
\begin{equation}
\limsup_{n\to +\infty} \frac{1}{n} \log \sum_{x\in \mpf^{-n}(y)} \exp(S_n \varphi (x))
\le 
\Pf(\varphi) 
\le 
\varepsilon + \liminf_{n\to +\infty} \frac{1}{n} \log \sum_{x\in \mpf^{-n}(y)} \exp(S_n \varphi (x)),
\end{equation}
which  implies \eqref{e:top pressure = tree pressure}.
\endproof

\section{Proof of the Bowen-type formula and applications}
\label{s:inducing}

In this section, we prove the Bowen-type formula and all the other results used in the proof of Theorems \ref{t:sarig}, \ref{t:sarig alpha} and \ref{t:robust sarig}  in \S \ref{s:main}. Like, Lemma \ref{l:operator} and Proposition \ref{p:omega transitions}. We also derive some additional consequences as Corollary \ref{c:phase transitions} and Proposition \ref{ex:geometric}.

\subsection{Proof of the Bowen-type formula and Lemma \ref{l:operator}}
We start with the proof of Lemma \ref{l:operator}, which is used in the proof of the Bowen-type formula.
\proof[Proof of Lemma \ref{l:operator}]
First, observe that by the bounded distortion property for every $p$ in $\R$, and all $k$ and $\ell$ in $\N$, we have 
\begin{equation}
\label{e:kl}
(\exp(- C) Z_{\ell}(\varphi, p))^k  \le Z_{k\ell}(\varphi, p) \le Z_{\ell}(\varphi, p)^k. 
\end{equation}
Then, for $Z_{1}(\varphi, p)$ finite, the sequence $(\log Z_{\ell}(\varphi, p))_{\ell \in \N}$ is finite and subadditive. Thus, by the Subadditive Lemma $\sP(\varphi,p)$ exists and belongs to $\{-\infty\}\cup \R$. But, by \eqref{e:kl}, we have that $\sP(\varphi,p)$ is in $\R$.
For $Z_{1}(\varphi, p)=+\infty$, by \eqref{e:kl} for every $\ell$ in $\N$ we have $Z_{\ell}(\varphi, p)=+\infty$. Then, $\sP(\varphi,p)$ exists, and it is equal to $+\infty$. This proves item 1.

For all $y$ in $J_0$,  $p$ in $\R$ and $\ell$ in $\N$ we have
\begin{multline}
\label{e: L interation}
(\sL_{\varphi,p}^\ell h )(y) \\ =  \sum_{x\in F^{-\ell}(y)} \exp(S_{m(x)+\cdots+m(F^{\ell-1}(x))}\varphi(x)-(m(x)+\cdots+m(F^{\ell-1}(x)))p)h(x).
\end{multline}
Together with the bounded distortion property of $\varphi$, this implies
\begin{equation}
\label{e: Z vs L left}
\exp(-C) Z_\ell(\varphi, p) \le (\sL_{\varphi,p}^\ell \one) (y).
\end{equation}
Since the other inequality is always true, \eqref{e:Z vs L} holds, which implies \eqref{e:transfer pressure} and proves item 2.

Finally, from \eqref{e:transfer pressure} and \eqref{e:kl} we get
\begin{equation}
\frac{1}{\ell} \log \frac{Z_\ell(\varphi, p)}{ \exp(C)} 
\le \sP(\varphi,p)
\le \frac{1}{\ell} \log Z_\ell(\varphi, p).
\end{equation}
Together with  \eqref{e:Z vs L}, this implies \eqref{eq:Z distortion}, finishing the proof of the lemma.
\endproof

\proof[Proof of Bowen-type formula]
Let $C>0$ be the distortion constant of $\varphi$.
 For every $y$ in $(x_1,1]$ put 
\begin{equation}
  L_ p(y)
   \=
  1 + \sum_{z \in L^{-1}(y)} \exp(S_{m(z)}\varphi(z) - m(z)p ).
     \end{equation}
  Notice that, if we put $M\=\exp(\|\varphi \| + p)$, then
  \begin{equation}
    \label{eq:32}
   1 + M^{-1}\sL_{\varphi,p}\one(y)  \le  L_{ p}(y) \le 1 + M \sL_{\varphi,p}\one(y).
  \end{equation}

 The proof of the proposition is divided into several parts.
  
  \partn{1} For every~$p_0$  such that~$\sPf(\varphi, p_0) > 0$,  
  we prove~$\Pf(\varphi) > p_0$.
  First we prove that there is~$p > p_0$ such that~$\sPf(\varphi, p) > 0$.
  If~$\sPf(\varphi, p_0)$ is finite, then the function~$p \mapsto \sPf(\varphi, p)$ is finite, continuous, and 
  strictly decreasing on~$[p_0, + \infty)$.
  It follows that there is~$p > p_0$ such that~$\sPf(\varphi, p) > 0$.
  If~$\sPf(\varphi, p_0) = + \infty$, then~$(\sL_{\varphi, p_0}\one)(1) = + \infty$ by Lemma \ref{l:operator}\eqref{eq:Z distortion}.
  By the Monotone Convergence Theorem, for every decreasing sequence $(p_n)_{n\in \N_0}$ in $(p_0,+\infty)$ converging to $p_0$, we have that $(\sL_{\varphi, p_n}\one)(1)$ converges to $(\sL_{\varphi, p}\one)(1)$ as $n$ goes to $+\infty$. It follows that there is~$p > p_0$ such that~$(\sL_{\varphi, p}\one)(1)  > \exp(C)$, and therefore~$\sPf(\varphi, p)  > 0$ 
  by Lemma \ref{l:operator}\eqref{eq:Z distortion}. This proves that in all of the cases, there is $p>p_0$ satisfying $\sP(\varphi,p)>0$.
  Since for each integer~$\ell \ge 1$, every point of~$F^{-\ell}(1)$ is a preimage of~$1$ by an iterate of~$\mpf$, we have 
  by Lemma \ref{l:operator}\eqref{eq:Z distortion}
  \begin{equation}
  \begin{split}
   \sum_{m = 1}^{+ \infty}  \exp( - &mp)  \sum_{y \in \mpf^{-m}(1)} \exp( S_m\varphi(y))
   \\   
   \ge &  \sum_{\ell = 1}^{+ \infty} \sum_{y \in F^{-\ell}(1)} \exp(S_{m(\mpf^{\ell - 1}(y)) + \cdots + m(y)}\varphi(y)  - (m(\mpf^{\ell - 1}(y)) + \cdots + m(y)) p ) 
    \\ 
    = & \sum_{\ell = 1}^{+ \infty} (\sL_{\varphi,p}^\ell \one )(1) \\
    = & + \infty.
    \end{split}
  \end{equation}
  Together with~\eqref{e:top pressure = tree pressure} in Lemma \ref{l:top pressure}(2), this implies~$\Pf(\varphi) \ge p > p_0$.

  \partn{2}
  For every~$p_0$ such that~$\sPf(\varphi, p_0) < 0$, we prove~$p_0 \ge \Pf(\varphi)$.
  Note that by Lemma~\ref{l:operator}\eqref{e:Z vs L}  and~\eqref{eq:32} there is~$\Delta_0>1$ such that for every~$y$ in~$(x_1,1]$ 
  we have~$L_{ p_0}(y) \le \Delta_0 L_{ p_0}(1)$. By Lemma \ref{l:operator}\eqref{eq:Z distortion} , \eqref{eq:32} 
  and~$\sPf(\varphi, p_0) < 0$
  we get that $L_{ p_0}(1)$ is finite.
 Using $\sP(\varphi,p_0)<0$ and Lemma \ref{l:operator}\eqref{eq:Z distortion} again, we obtain 
  \begin{equation}
  \begin{split}
   & \sum_{m = 1}^{+ \infty}  \exp(-mp_0) \sum_{y \in \mpf^{-m}(1)} \exp(S_m \varphi(y)) \\
    =
    &  L_{\beta, p_0}(1) +
       \sum_{\ell = 1}^{+ \infty} \sum_{y\in F^{-\ell}(1)} L_{ p_0}(y) \exp(\beta S_{m(\mpf^{\ell - 1}(y)) + \cdots + m(y)}\varphi(y))
      - (m(\mpf^{\ell - 1}(y)) + \cdots + m(y)) p_0        
      \\ 
      \le & \Delta_0 L_{ p_0}(1) \left(1 + \sum_{\ell = 1}^{+ \infty} \left(\sL_{\varphi,p_0}^\ell \one\right)(1) \right)
     \\ 
      < &
      + \infty.
      \end{split}
    \end{equation}
    In view of~\eqref{e:top pressure = tree pressure} in Lemma \ref{l:top pressure}(2), we obtain~$\Pf(\varphi) \le p_0$.

    \partn{3}
    Put
    \begin{displaymath}
      p_*
      \=
      \inf \left\{ p \in \R \colon \sPf(\varphi, p) \le 0 \right\}.
    \end{displaymath}
    We prove $p_*=P(\varphi)$ and~$\sPf(\varphi, \Pf(\varphi)) \le 0$. 
    For every~$p < p_*$ we have~$\sPf(\varphi, p) > 0$, and therefore~$\Pf(\varphi) > p$ by part~1.
 We conclude that~$\Pf(\varphi) \ge p_*$.
 To prove the reverse inequality, note that from the fact that~$p \mapsto \sPf(\varphi, p)$ is nonincreasing on~$\R$ and strictly decreasing on the set where it is finite, we have that this function is finite and strictly decreasing on~$(p_*, + \infty)$.
    It follows that for every~$p$ in~$(p_*, + \infty)$ we have~$ \sP(\varphi, p) < 0$, and therefore~$p \ge \Pf(\varphi)$ by item~2.
    We conclude~$p_* \ge \Pf(\varphi)$, and therefore~$p_* = \Pf(\varphi)$.
    Finally, observe that by item~1 we must have~$\sPf(\varphi, \Pf(\varphi)) \le 0$.
    
\partn{4} If $p_0<P(\varphi)$ then, by part~2, $\sP(\varphi,p_0)\ge 0$. But if $\sP(\varphi,p_0) = 0$, by \eqref{e:Bowen formula}, we get that $p_0\ge P(\varphi)$, which is a contradiction. Therefore, $\sP(\varphi,p_0) > 0$. 
If $p_0=P(\varphi)$ then, by part~1, $\sP(\varphi,p_0)\le 0$.
Finally, assume~$p_0>P(\varphi)$. By part~3, $\sPf(\varphi, \Pf(\varphi)) \le 0$.  Since the function $p \mapsto \sPf(\varphi, p)$ is strictly decreasing on the set where it is finite, we have that it is strictly decreasing on~$(P(\varphi), + \infty)$. Therefore, $\sP(\varphi,p_0)< 0$.
This finishes the proof of the proposition. 
\endproof

\subsection{Applications}
\label{ss:Applications}

This section proves Proposition \ref{p:omega transitions}. We use Proposition \ref{p:omega transitions} to study phase transitions in temperature for several potentials already present in the literature. We start with the proof of Corollary \ref{c:phase transitions}, another consequence of Proposition \ref{p:omega transitions}. Then, we apply these results to the study of the phase transition in temperature of the geometric potential in Proposition \ref{ex:geometric}. This is well known, but we will take the opportunity to provide simple proof using the tools developed in this section. The proof of Proposition \ref{p:omega transitions} is at the end of the section. 

The following corollary gives some simple conditions on Hölder continuous potentials for having a phase transition in temperature.
\begin{coro}\label{c:phase transitions}
Let $\varphi:[0,1]\to \R$ be a Hölder continuous potential such that~$\varphi-\varphi(0)$ is strictly negative on~$(0,1]$. The following hold.
\begin{enumerate}
\item[1.] If there are~$c>0$ and~$\delta>0$ such that for every $x$ in $[0,\delta]$,
\begin{equation}
\label{e:tame at 0}
\varphi(x)-\varphi(0) \le c\omega_{\alpha}(x),
\end{equation}
then~$\varphi$ has a phase transition in temperature. 
\item[2.] For every $\gamma$ in $(0,\alpha]$, if $\varphi$ is a potential in $\Sspace$ with a nonzero leading coefficient
in $\Sspace$, then~$\varphi$ has a phase transition in temperature. 
\end{enumerate}
\end{coro}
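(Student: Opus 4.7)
Both parts rest on the observation that the strict inequality $\varphi < \varphi(0)$ on $(0,1]$ forces $\delta_0$ to be the unique maximizing measure for $\varphi$: for any $f$-invariant probability $\mu \neq \delta_0$, inner regularity produces $\varepsilon > 0$ with $\mu([\varepsilon,1]) > 0$, and since $\varphi - \varphi(0)$ is continuous and strictly negative on the compact set $[\varepsilon,1]$, it attains there a strictly negative maximum, whence $\int \varphi \, \dd\mu < \varphi(0)$. I would record this first.

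Granting this, Part~2 is essentially immediate from Theorem~\ref{t:sarig}. By hypothesis the leading coefficient of $\varphi$ in $\Sspace$ is nonzero; were it strictly positive, Theorem~\ref{t:sarig}(1) (applicable since $\gamma \le \alpha$) would prevent $\delta_0$ from being maximizing, contradicting the preceding paragraph. Hence the leading coefficient is strictly negative, and Theorem~\ref{t:sarig}(2) delivers the phase transition in temperature.

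For Part~1 my plan is to proceed by comparison with $\omega_\alpha$. On the compact interval $[\delta,1]$ the continuous function $\varphi - \varphi(0)$ attains a strictly negative maximum $-\mu$, and since $x^\alpha \le 1$ there, it follows that $\varphi - \varphi(0) \le \mu\,\omega_\alpha$ on $[\delta,1]$. Combined with the hypothesis on $[0,\delta]$ and setting $C \= \min\{c,\mu\} > 0$, this yields the global bound $\varphi - \varphi(0) \le C\,\omega_\alpha$ on $[0,1]$. Monotonicity of the pressure and the identity $P(\psi + t) = P(\psi) + t$ then produce, for every $\beta > 0$,
\[
P(\beta\varphi) - \beta\varphi(0) \;\le\; P(\beta C\, \omega_\alpha).
\]
Proposition~\ref{p:omega transitions} (with $\gamma = \alpha$) together with Corollary~\ref{c:tpt} applied to the \textsc{Hölder} potential $\omega_\alpha$ supplies $\beta^{\ast} > 0$ such that $P(\beta\omega_\alpha) = 0$ for every $\beta \ge \beta^{\ast}$; choosing $\beta$ large enough that $\beta C \ge \beta^{\ast}$ gives $P(\beta\varphi) \le \beta\varphi(0)$, while the reverse inequality is automatic via the invariant measure $\delta_0$. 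The resulting equality $P(\beta\varphi) = \beta\varphi(0)$ places $\varphi$ in the second alternative of the dichotomy of Corollary~\ref{c:tpt}, delivering the desired phase transition in temperature.

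I do not foresee a substantial obstacle: the whole argument is an assembly of Theorem~\ref{t:sarig}, Proposition~\ref{p:omega transitions}, Corollary~\ref{c:tpt} and basic monotonicity of the pressure. The only mildly technical moment is the promotion in Part~1 of the local estimate near $0$ to a global estimate on $[0,1]$, which is routine given continuity and strict negativity of $\varphi - \varphi(0)$ away from zero.
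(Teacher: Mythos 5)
Your proof of Part~1 is essentially identical to the paper's: extend the bound $\varphi - \varphi(0) \le c\,\omega_\alpha$ from $[0,\delta]$ to all of $[0,1]$ using strict negativity on $[\delta,1]$, compare pressures, invoke Proposition~\ref{p:omega transitions} and Corollary~\ref{c:tpt}. (Minor point: your $\mu$ for the negative maximum on $[\delta,1]$ collides with the standing use of $\mu$ for invariant measures; best renamed.)

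For Part~2 you take a genuinely different and noticeably heavier route. You first establish that $\delta_0$ is the unique maximizing measure, then rule out a positive leading coefficient by appealing to Theorem~\ref{t:sarig}(1), and finally invoke Theorem~\ref{t:sarig}(2) — whose proof rests on the Main Theorem, the most technical result in the paper. The paper's proof avoids the Main Theorem entirely: it observes from Lemma~\ref{l:potential form} that $\varphi(x)-\varphi(0)=(c+h(x))x^\gamma$ with $h(0)=0$, so strict negativity of $\varphi-\varphi(0)$ on $(0,1]$ immediately forces $c\le 0$, hence $c<0$ by hypothesis; then, again by Lemma~\ref{l:potential form} and $\gamma\le\alpha$, one gets $\varphi-\varphi(0)\le (c/2)x^\gamma\le (c/2)x^\alpha$ near~$0$, which is exactly hypothesis~\eqref{e:tame at 0}, and Part~1 closes the argument. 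This is precisely what the paper advertises in the introduction: Corollary~\ref{c:phase transitions}(2) is meant to provide a \emph{simpler} proof of the phase transition for strictly negative potentials, bypassing the Main Theorem. Your argument is correct, but your preamble on unique maximizing measures is unnecessary here, and citing Theorem~\ref{t:sarig}(2) defeats the stated purpose of the corollary (though it creates no circularity, since the Main Theorem does not depend on this corollary).
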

\proof
\partn{1} From the asumption that $\varphi-\varphi(0)$ is strictly negative on~$(0,1]$ we get that there is $c'>0$ such that for every $x$ in $[\delta,1]$ one has $\varphi(x)-\varphi(0)\le c'\omega_{\alpha}(x)$. Together with \eqref{e:tame at 0}, this implies that for $ \beta_0\=  \min\{c,c'\}$ we have 
$\varphi-\varphi(0)\le \beta_0 \omega_{\alpha}$ on $[0,1]$. But by Proposition~\ref{p:omega transitions}, the potential~$\omega_{\alpha}$
has a phase transition in temperature, and thus, by Corollary~\ref{c:tpt}, for $\beta>0$ sufficiently large, 
one has that~$P(\beta \omega_{\alpha})=0$. 
Then,
\begin{equation}
P(\beta \beta_0^{-1} \varphi) \le  P(\beta \omega_{\alpha}) + \beta\beta_0^{-1}\varphi(0) = \beta\beta_0^{-1}\varphi(0).
\end{equation}
Again, by Corollary~\ref{c:tpt}, the potential $\varphi$ has a phase transition in temperature.

\partn{2} From Lemma \ref{l:potential form} and the fact that~$\varphi-\varphi(0)$ is strictly negative on~$(0,1]$ we get that the leading coefficient of 
$\varphi$ is nonpositive. Thus, by hypothesis, it should be negative. Again,  by Lemma \ref{l:potential form}, there are $c>0$ and $\delta>0$ such that 
\eqref{e:tame at 0} holds. From item 1, we conclude that $\varphi$ has a phase transition in temperature.
\endproof

The geometric potential~$-\log Df$ is a typical example of a potential that exhibits a phase transition in temperature. It is known that this phase transition occurs at~$1$. As an application of the Bowen-type formula, we provide a simple proof of this fact here. An alternative proof, which avoids the inducing scheme, 
is given in~\cite[Proposition 4.5]{CorRiv25b}.

\begin{prop}[The geometric potential]
\label{ex:geometric}
The geometric potential~$-\log Df$ has a phase transition in temperature at $1$.
\end{prop}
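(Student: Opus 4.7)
The strategy is to apply the Bowen-type formula at $p=0$ and reduce the location of the phase transition to a transfer-operator computation on the induced full shift.

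Near $0$ we have $Df(x)=1+(\alpha+1)x^\alpha$, so $\varphi'(x)/(\alpha x^{\alpha-1}) \to -(\alpha+1)$ as $x\to 0^+$, which shows $\varphi\=-\log Df$ lies in $C_\dag^{1,\alpha}(\R)$ with leading coefficient $-(\alpha+1)<0$. Moreover $Df>1$ on $(0,1]$, so $\varphi<0=\varphi(0)$ on $(0,1]$. Corollary~\ref{c:phase transitions}(2) therefore yields a phase transition at some $\beta_*>0$, and since $\varphi(0)=0$, Corollary~\ref{c:tpt} combined with the Bowen-type formula identifies
\begin{equation}
\beta_* = \inf\{\beta>0 \colon \sP(\beta\varphi,0)\le 0\}.
\end{equation}
So it suffices to locate the sign change of $\beta \mapsto \sP(\beta\varphi,0)$.

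Now set $p_m \= |I_m|/|J_0|$. By the mean value theorem together with the bounded distortion of Lemma~\ref{l:geometric-distortion}, $Df^m(x) \asymp p_m^{-1}$ uniformly for $m\in\N$ and $x\in I_m$. The same lemma provides distortion constants for $Df^n$ uniform in $n$, so iterating along length-$\ell$ words $(m_1,\ldots,m_\ell)\in\N^\ell$ for the induced map $F$, and writing the $F^\ell$-cylinders as connected components of $f^{-(m_1+\cdots+m_\ell)}(J_0)$, one obtains
\begin{equation}
(\sL_{\beta\varphi,0}^\ell \one)(y) \asymp \left(\sum_{m=1}^\infty p_m^\beta\right)^\ell
\end{equation}
with multiplicative constants independent of $\ell$ and $y\in J_0$. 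By Lemma~\ref{l:operator} this yields the clean formula
\begin{equation}
\sP(\beta\varphi,0) = \log A(\beta), \qquad A(\beta) \= \sum_{m=1}^\infty p_m^\beta.
\end{equation}

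Since the intervals $I_m$ partition $J_0$, we have $A(1)=1$, so $\sP(\varphi,0)=0$ and therefore $\beta_*\le 1$. For $\beta\in(0,1)$, each $p_m\in(0,1)$ gives $p_m^\beta>p_m$ strictly, so $A(\beta)>\sum_m p_m = 1$ and $\sP(\beta\varphi,0)>0$, giving $\beta_*\ge 1$. Combining, $\beta_*=1$.

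The main technical point is the uniform-in-$\ell$ distortion estimate underlying $\sP(\beta\varphi,0)=\log A(\beta)$; once this is in hand, the distinguished role of $\beta=1$ is purely the conservation law $\sum_m |I_m|=|J_0|$ encoded in the full-branch Markov structure of $F$, that is, the fact that Lebesgue measure on $J_0$ gives $\sL_{\varphi,0}$ spectral radius exactly one.
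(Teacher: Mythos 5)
Your first two steps are solid: $\varphi\=-\log Df$ lies in $C_\dag^{1,\alpha}(\R)$ with leading coefficient $-(\alpha+1)<0$ and is strictly negative on $(0,1]$, so Corollary~\ref{c:phase transitions}(2) gives a phase transition; and since $\varphi(0)=0$, Corollary~\ref{c:tpt} together with the Bowen-type formula does reduce the problem to locating the sign change of $\beta\mapsto\sP(\beta\varphi,0)$. The issue is the asserted ``clean formula'' $\sP(\beta\varphi,0)=\log A(\beta)$ with $A(\beta)=\sum_m p_m^\beta$, and more precisely the claim that $(\sL_{\beta\varphi,0}^\ell\one)(y)\asymp A(\beta)^\ell$ with constants independent of $\ell$. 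Lemma~\ref{l:geometric-distortion} gives uniform distortion of $Df^n$ on components of $f^{-n}(J_0)$, which yields $(\sL_{\beta\varphi,0}^\ell\one)(y)\asymp\sum_{J}\bigl(|J|/|J_0|\bigr)^\beta$ uniformly in $\ell$, where the sum runs over $\ell$-cylinders $J$ of $F$. But passing from $\sum_J(|J|/|J_0|)^\beta$ to $A(\beta)^\ell$ with a uniform constant would require $|J_{m_1\cdots m_\ell}|\asymp|J_0|\prod_j p_{m_j}$ uniformly in $\ell$, which is \emph{not} a consequence of bounded distortion: applying the $1$-cylinder estimate $DF(z)\asymp p_{m_j}^{-1}$ factor by factor gives a constant $C_1^\ell$, not a uniform one. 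Indeed, for a nonlinear full-branch expanding map the pressure of $-\beta\log DF$ depends on more than the branch lengths $p_m$ (it is not determined by $\{p_m\}$ alone away from $\beta\in\{0,1\}$), so the formula $\sP(\beta\varphi,0)=\log A(\beta)$ is generally false. Your argument happens to survive only at $\beta=1$, where $\sum_J|J|/|J_0|=1$ is an \emph{exact} identity for every $\ell$, so the distortion constants do not compound and one correctly gets $\sP(\varphi,0)=0$.

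To repair the step $\sP(\beta\varphi,0)>0$ for $\beta<1$, do not rely on the formula; instead use that $\varphi<0$ on $(0,1]$, so $\beta\mapsto\sP(\beta\varphi,0)$ is strictly decreasing on the interval where it is finite (this is exactly what the paper does, observing finiteness on $(\alpha/(\alpha+1),+\infty)$ from the asymptotics of $Df^n(x_n)$ in Lemma~\ref{l:indifferent-branch'}). Combined with $\sP(\varphi,0)=0$ this gives $\sP(\beta\varphi,0)>0$ for $\beta\in(\alpha/(\alpha+1),1)$ and $\sP(\beta\varphi,0)\le0$ for $\beta\ge1$, and the Bowen-type formula then forces $P(\beta\varphi)>0$ for $\beta<1$ and $P(\beta\varphi)=0$ for $\beta\ge1$ (using $P(\beta\varphi)\ge\beta\varphi(0)=0$ always). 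That yields $\beta_*=1$ without invoking any exact expression for $\sP(\beta\varphi,0)$. In short: your structure (Bowen formula at $p=0$, conservation law $\sum_m|I_m|=|J_0|$ pinning down $\beta=1$) is the same as the paper's, but the exact transfer-operator formula you interpolate for general $\beta$ is an overreach; replace it by strict monotonicity in $\beta$.
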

\proof
From Corollary \ref{c:phase transitions}(2), we have that the geometric potential has a phase transition in temperature. 
Now, we prove this phase transition in temperature occurs at $\beta=1$. 

First, observe that on~$\left(\alpha/(\alpha+1),+\infty\right)$ the 
function~$\beta\mapsto \sP(-\beta\log Df,0)$ is finite and thus strictly decreasing. By Lemma \ref{l:geometric-distortion}, there is a constant $C_1>1$ such that for every $n$ in $\N$ and for every~$J$ in~$\bigvee_{j=0}^{n-1} F^{-j}(\fD)$ we have 
\begin{equation}
 \inf_{x\in J} DF^n(x) \le \frac{|J_0|}{|J|}\le \sup_{x\in J} DF^n(x) \le C_1\inf_{x\in J} DF^n(x).
\end{equation}
Observe that for every $\ell$ in $\N$ we have $ \sum_{J\in \bigvee_{j=0}^{\ell-1} F^{-j}(\fD)} |J| = |J_0|$.
Since,
\begin{equation}
\sP(-\log Df,0) = \lim_{\ell \to +\infty} \frac{1}{\ell} \log \sum_{J\in \bigvee_{j=0}^{\ell-1} F^{-j}(\fD)} \sup_{x\in J} \exp(-\log DF^\ell(x))
\end{equation}
we get 
\begin{equation}
\begin{split}
0=  \lim_{\ell \to +\infty} \frac{1}{\ell} \log \left( \frac{C_1}{|J_0|} \sum_{J\in \bigvee_{j=0}^{\ell-1} F^{-j}(\fD)} |J| \right) \ge &  \sP(-\log Df,0) \\
\ge & \lim_{\ell \to +\infty} \frac{1}{\ell} \log\left( \frac{1}{|J_0|} \sum_{J\in \bigvee_{j=0}^{\ell-1} F^{-j}(\fD)} |J| \right) \ge 0.
\end{split}
\end{equation}
This proves~$ \sP(-\log Df,0)=0$. Together with the fact that~$\beta\mapsto \sP(-\beta\log Df,0)$ is strictly decreasing 
on~$\left(\alpha/(\alpha+1),+\infty\right)$, we get that for every~$\beta$ in $(\alpha/(\alpha+1),1)$, one has~$\sP(-\beta\log Df,0)>0$, and 
for every~$\beta\ge 1$, one has~$\sP(-\beta\log Df,0)\le 0$. Then,
by Proposition~\ref{p:Bowen type formula}, if~$\sP(-\beta\log Df,0)>0$ then~$P(-\beta\log Df)\neq 0$ and~$P(-\beta\log Df)\ge 0$, 
and thus,~$P(-\beta\log Df) > 0$.
Again, by Proposition~\ref{p:Bowen type formula}, if~$\sP(-\beta\log Df,0)\le 0$ then~$P(-\beta\log Df)\le 0$. 
Therefore, for every~$\beta$ in $(\alpha/(\alpha+1),1)$, one has~$P(-\beta\log Df)>0$, and 
for every~$\beta\ge 1$, one has~$P(-\beta\log Df)\le 0$. Since 
$P(-\beta\log Df)$ is always non-negative we get that 
for every~$\beta\ge 1$, one has~$P(-\beta\log Df) = 0$.
By Corollary~\ref{c:tpt}, we conclude that $-\log Df$ has a phase transition in temperature at 1.
\endproof

In  \cite[Proposition 3.3]{CorRiv25b}, we provide an alternative proof of Proposition \ref{p:omega transitions} that does not depend on the inducing scheme.
\proof[Proof of Proposition \ref{p:omega transitions}]
For every  integer $n\ge 2$ we have  
\begin{equation}\label{e:Sn}
S_{n}\omega_\gamma(y_{n}) = - y_{n}^\gamma - \sum_{j=1}^{n-1} x_j^\gamma \text{ and } 
S_{n}\omega_\gamma(y_{n+1}) = - y_{n+1}^\gamma - \sum_{j=2}^{n} x_j^\gamma.
\end{equation}
Since $\omega_\gamma$ is decreasing, for every integer $n\ge 1$ and every $y$ in $(y_{n+1},y_n]$ we have 
\begin{equation}\label{e:omega decreciente}
S_n \omega_\gamma(y_{n}) \le S_n \omega_\gamma(y) \le S_n \omega_\gamma(y_{n+1}).
\end{equation}
By \eqref{eq:10} and \eqref{e:Sn} there are positive constants $C$ and $C'$ such that for all integer $n\ge 2$  and $y$ in $(y_{n+1},y_n]$ we have
\begin{equation}\label{e:birkof sum}
-1- C \sum_{j=1}^{n-1} \frac{1}{j^{\frac{\gamma}{\alpha}}} \le  S_{n}\omega_\gamma(y_{n}) \le S_n\omega_\gamma(y)\le 
S_{n}\omega_\gamma(y_{n+1}) \le - C' \sum_{j=2}^{n} \frac{1}{j^{\frac{\gamma}{\alpha}}}.
\end{equation}
 Observe  that for every $\beta$ in $(0,+\infty)$ we have
\begin{equation}\label{e:transfer}
(\sL_{\beta\omega_\gamma, 0} \one) (1)   = \sum_{x\in F^{-1}(1)} \exp(\beta S_{m(x)}\omega_\gamma(x))
 = \sum_{n=1}^{+\infty} \exp\left(\beta S_n\omega_\gamma(y_{n})\right).
\end{equation}
By \eqref{e:omega decreciente} we have 
\begin{equation}\label{e:L and Z 2}
Z_1(\beta\omega_\gamma,0) 
 \le 
\sum_{n=1}^{+\infty} \exp\left(\beta S_n\omega_\gamma(y_{n+1})\right).
\end{equation}
If $\gamma>\alpha$, then
\begin{equation}
\sum_{j=1}^{+\infty} \frac{1}{j^{\frac{\gamma}{\alpha}}} < +\infty.
\end{equation} 
By \eqref{e:birkof sum} and \eqref{e:transfer}, for every $\beta$ in $(0,+\infty)$ we have
\begin{equation}
(\sL_{\beta\omega_\gamma, 0} \one) (1) = +\infty.
\end{equation} 
By Lemma \ref{l:operator}\eqref{eq:Z distortion}, we get~$\sP(\beta\omega_\gamma, 0)=+\infty$ and from the Bowen-type formula and Lemma \ref{l:sarig class is loc holder} we conclude $P(\beta\omega_\gamma)>0$. By Corollary \ref{c:tpt} the potential $\omega_\gamma$ does not have a phase transition in temperature.

Now, we assume that $\gamma \le \alpha$ and prove that $\omega_\gamma$ has a phase transition in temperature. First, observe that it is enough to prove the result for $\gamma=\alpha$ because $\omega_\gamma\le \omega_\alpha$. Since the potential $\omega_\alpha$ is strictly negative on $(0,1]$, by \eqref{e:omega decreciente}, 
for every $n$ in $\N$ and every $y$ in $(y_{n+1},y_n]$ we have
\begin{equation}\label{e:menor que 1}
\exp(S_n\omega_\alpha(y)) \le \exp(S_n\omega_\alpha(y_{n+1})) < 1.
\end{equation}
Together with \eqref{e:birkof sum} and \eqref{e:L and Z 2} this  implies that for $\beta>0$ sufficiently large we have
\begin{equation}
Z_1(\beta\omega_\alpha,0)< 1.
\end{equation}
By Lemma \ref{l:operator}\eqref{eq:Z distortion}, we get  $\sP(\beta\omega_\alpha,0)<0$, and thus, by the Bowen-type formula we deduce that $P(\beta\omega_\alpha)=0$. From Corollary \ref{c:tpt}, we obtain that $\omega_\alpha$ has a phase transition in temperature.
\endproof

\section{Proof of the Main Theorem}
\label{s:proofs}

The proof of $1\Rightarrow 2$ follows from Corollary~\ref{c:tpt} and the Key Lemma, and the proof of $2\Rightarrow 3$ is direct by compactness. 
We prove $3\Rightarrow 1$.
Let $n_0$ be in $\N$  satisfying \eqref{e:xn lower bound 0}, 
let $\varphi$ be a potential in $\Sspace$,  and let $c$ be in $(-\infty,0)$  verifying  \eqref{e:sup in compact}. 
Let $m_0$ be as in the theorem's statement and notice that \eqref{e:m0 robust} implies $m_0>n_0$. 
We assume that \eqref{e:compact optimization} holds.

Put
  \begin{equation}
    \label{eq:153}
    \eta
    \=
    \sup \left\{ \int \varphi \dd \nu -\varphi(0) \colon \nu \in \sM, \supp(\nu) \subseteq [x_{m_0}, 1] \right\}.
  \end{equation}
  By \eqref{e:compact optimization}, we have
  \begin{equation}
    \label{eq:154}
    \eta
    <
    0.
  \end{equation}
Put
\begin{equation}
\label{e:constants}
C'\=\max\left\{\frac{c}{4\alpha^\frac{\gamma}{\alpha}\theta} ,\eta \right\} \text{ and } C''\=\max\left\{ \frac{c}{4\alpha}, \frac{\eta}{\log2} \right\}.
\end{equation}



We prove that for every  $\beta>0$ and every 
$\ell$ in $\N$, both sufficiently large, we have 
\begin{equation}
\label{e:Zell<1}
Z_{\ell}(\beta\varphi,\beta\varphi(0))< 1.
\end{equation} 
Thus, by Lemma \ref{l:operator}\eqref{eq:Z distortion}, we get $\sP(\beta\varphi,\beta\varphi(0)) < 0$, and then, by the Bowen-type formula, we obtain $\beta\varphi(0)\ge P(\beta\varphi)$. Since the inequality $P(\beta\varphi) \ge \beta\varphi(0)$ is always true, Corollary \ref{c:tpt} implies that $\varphi$ has a phase transition in temperature.

To prove \eqref{e:Zell<1}, it is enough to show that for each $\ell\in \N$ sufficiently large every summand in 
$Z_{\ell}(\varphi,\varphi(0))$ is strictly less than 1, and that for each $\beta$ in $(0,+\infty)$ sufficiently large
\begin{equation}
\label{e:Zell finite}
Z_{\ell}(\beta\varphi,\beta\varphi(0))< +\infty.
\end{equation} 
From the definition of $Z_{\ell}(\varphi,\varphi(0))$ in \eqref{d:Zell}, the first statement is equivalent to prove that 
for $\ell\in \N$ sufficiently large and every $J$ in $\bigvee_{j=0}^{\ell-1}F^{-j}(\fD)$ we have
\begin{equation}
\label{e:energy<1}
\sup_{x\in J} \left(S_{m(x)+\cdots+m(F^{\ell-1}(x))} \varphi(x) -(m(x)+\cdots+m(F^{\ell-1}(x))\varphi(0)\right)  < 0.
\end{equation}
Before proving this inequality, we provide two preliminary estimates—particular instances of \eqref{e:energy<1}—which will be used to establish the general case.

 Observe that for every~$\ell$ in $\N$, every  $J$ in $\bigvee_{j=0}^{\ell-1}F^{-j}(\fD)$, and every~$x$ in $J$  the sum of the return times~$m(x)+\cdots+m(F^{\ell-1}(x))$ is constant. 
 
For the first preliminary estimate, let~$\ell'$ be in $\N$ and let~$J'$ be 
in~$\bigvee_{j=0}^{\ell'-1}F^{-j}(\fD)$. For~$z$ in~$J'$ put 
 $n' \=  m(z)+\ldots+m(F^{\ell'-1}(z)) $.
Observe that $f^{n'}|_{J'}=F^{\ell'}|_{J'}$ and denote by $p$ the unique periodic point of $f$ of period $n'$ in $J'$. Assume that the orbit segment $(f^j(z))_{j=0}^{n'}$ is included in $[x_{m_0},1]$. Thus, the orbit of $p$ is also included in $[x_{m_0},1]$. 
By \eqref{eq:153},  and since $\varphi$ has  bounded distortion on $J_0$ for $f$, 
by Lemma \ref{l:sarig class is loc holder}, 
for every $z$ in $J'$ we have
\begin{equation}
\label{e:sombra}
S_{n'}\varphi(z)-n'\varphi(0) \le D|\varphi|_{1,\gamma} + S_{n'}\varphi(p)-n'\varphi(0) \le  D|\varphi|_{1,\gamma}+n'\eta.
\end{equation}

For the second preliminary estimate, let~$y$ be in~$J_0$ and assume that~$m(y) \ge m_0$. By \eqref{e:sup in compact},
\begin{equation}
\label{e:depth}
\begin{split}
S_{m(y)}\varphi(y)-m(y)\varphi(0) 
& =   \varphi(y)-\varphi(0)+  S_{m(y)-1}\varphi(f(y))-(m(y)-1)\varphi(0)
\\
& \le  2n_0\|\varphi\|   +c \sum_{j=n_0+1}^{m(y)} x_j^\gamma.
\end{split}
\end{equation}
Together with \eqref{e:xn lower bound 0} and \eqref{e:m0 robust}, for $\gamma<\alpha$, we get 
\begin{equation}
\label{e:depth 1}
\begin{split}
 S_{m(y)}\varphi(y)-m(y)\varphi(0) 
& \le 2n_0 \|\varphi\|  - \frac{c}{2\alpha^\frac{\gamma}{\alpha}\theta}   (n_0+1)^{\theta} +
\frac{c}{2\alpha^\frac{\gamma}{\alpha}\theta} (m(y)+1)^{\theta}  
\\
& \le  -D |\varphi|_{1,\gamma} +\frac{c}{4\alpha^\frac{\gamma}{\alpha}\theta} (m(y)+1)^{\theta},
\end{split}
\end{equation}
and for $\gamma=\alpha$,  we get
\begin{equation}
\label{e:depth 2}
\begin{split}
S_{m(y)}\varphi(y)-m(y)\varphi(0)
& \le 2n_0\|\varphi\|-\frac{c}{2\alpha}   \log(n_0+1) + 
\frac{c}{2\alpha} \log(m(y)+1)  
\\
& \le  -D|\varphi|_{1,\gamma} + \frac{c}{4\alpha} \log(m(y)+1).
\end{split}
\end{equation}

Now we prove \eqref{e:energy<1}.
Let~$\ell$ be in~$\N$ and  let~$J$ be in~$\bigvee_{j=0}^{\ell-1}F^{-j}(\fD)$. 
For each~$x$ in~$J$  put~$n(x)\= m(x)+\cdots+m(F^{\ell-1}(x))$.
Fix $x$ in $J$ and denote by $s$ the cardinality of the set 
\begin{equation}
\sD \= \{j\in \{0,\ldots,\ell-1\} \colon m(F^j(x)) \ge  m_0 \}.
\end{equation}
 If~$s=\ell$, from~\eqref{e:depth 1},~\eqref{e:depth 2} and the fact that the function~$r\mapsto r^\theta$ is subadditive on~$[0,+\infty)$ 
 when~$\gamma<\alpha$,  we get 
\begin{equation}
\label{e:only depth}
S_{n(x)}\varphi(x)-n(x)\varphi(0) \le 
\begin{cases}
-D|\varphi|_{1,\gamma}\ell + \frac{c}{4\alpha^\frac{\gamma}{\alpha}\theta} n(x)^{\theta}, & \text{ if } \gamma< \alpha;\\
-D|\varphi|_{1,\gamma}\ell + \frac{c}{4\alpha}   \log n(x), & \text{ if } \gamma=\alpha.
\end{cases}
\end{equation}
Suppose $s<\ell$ and put
\begin{equation}
\sS \= \{0,\ldots,\ell-1\}\setminus \sD.
\end{equation}
Then $\sS$  is nonempty; it can be decomposed into blocks of consecutive numbers. 
Let $\ell'_1\cdots \ell'_k$ be one of these blocks for $k$ in $\N$. Put $n'\= m(F^{\ell'_1}(x))+\cdots+m(F^{\ell'_k}(x))$. By definition of
$\sS$, for every $i$ in $\{1,\ldots, k\}$ one has that $m(F^{\ell'_i}(x)) < m_0$. Then, the orbit segment $(f^j(F^{\ell'_1}(x)))_{j=0}^{n'-1}$ is included in $[x_{m_0},1]$. By \eqref{e:sombra} we get
\begin{equation}
\label{e:bloque sombra}
S_{n'}\varphi(F^{\ell_1}(x))-n'\varphi(0)\le D|\varphi|_{1,\gamma}+n'\eta.
\end{equation}
Denote by $t$ the number of maximal blocks of consecutive numbers in $\sS$. Observe that 
\begin{equation}\label{e:t and s}
t\le s+1.
\end{equation}
Denote by $\ell_1,\ldots,\ell_s$ the numbers in $\sD$ and 
for every maximal block of consecutive numbers $\ell'_1(j)\cdots \ell'_{k_j}(j)$ in $\sS$, for $j\in \{1,\ldots, t\}$, set 
\begin{equation}
n'_j\= m(F^{\ell'_1(j)}(x))+\cdots+m(F^{\ell'_{k_j}(j)}(x)).
\end{equation}
Put 
\begin{equation}
n''\= \sum_{j=1}^t n'_j
\end{equation}
and  notice that 
\begin{equation}
n(x)=n''+ m(F^{\ell_1}(x))+\cdots + m(F^{\ell_s}(x)).
\end{equation}
For~$\gamma<\alpha$, using~\eqref{eq:154},\eqref{e:constants},\eqref{e:depth 1},\eqref{e:bloque sombra}  and~\eqref{e:t and s}, and the subadditivity of the 
function~$r\mapsto r^\theta$ on~$[0,+\infty)$  we get  that
\begin{equation}
\label{e:negative deviation 1}
\begin{split}
\quad S_{n(x)}\varphi(x)&-n(x)\varphi(0) \\
& \le -D|\varphi|_{1,\gamma} s  + \frac{c}{4\alpha^\frac{\gamma}{\alpha}\theta} \sum_{j=1}^s  (m(F^{\ell_j}(x))+1)^\theta  + D|\varphi|_{1,\gamma} t + \eta n''
\\
& \le D |\varphi|_{1,\gamma} + \frac{c}{4\alpha^\frac{\gamma}{\alpha}\theta}  
\left(\sum_{j=1}^s (m(F^{\ell_j}(x))+1)\right)^\theta +\eta (n'')^\theta
\\
& \le D |\varphi|_{1,\gamma} + C' n(x)^\theta. 
\end{split}
\end{equation} 
For~$\gamma=\alpha$, from~\eqref{eq:154},\eqref{e:constants},\eqref{e:depth 2},\eqref{e:bloque sombra}  and~\eqref{e:t and s}, and using that for each $j$ in $\{1,\ldots,s\}$ one has that 
$m(F^{\ell_j}(x))>1$  we get that
\begin{equation}
\label{e:negative deviation 2}
\begin{split}
\quad S_{n(x)}\varphi&(x)-n(x)\varphi(0) \\
& \le -D|\varphi|_{1,\gamma}s + \frac{c}{4\alpha} \sum_{j=1}^s \log (m(F^{\ell_j}(x))+1)  + D|\varphi|_{1,\gamma} t + n''\eta
\\
& \le D|\varphi|_{1,\gamma} + \frac{c}{4\alpha}   \log \left( \prod_{j=1}^s (m(F^{\ell_j}(x))+1)\right) +\frac{\eta}{\log2}\log(n''+1)
\\
& \le D|\varphi|_{1,\gamma}+C''  \log \left( (n''+1) \prod_{j=1}^s (m(F^{\ell_j}(x))+1)\right) 
\\
& \le D |\varphi|_{1,\gamma}+C'' \log n(x). 
\end{split}
\end{equation} 
Observe that by \eqref{eq:154} and the fact that $c<0$, the constants $C'$ and $C''$ are strictly negative.
Since $n(x) \ge \ell$, by  \eqref{e:only depth}, \eqref{e:negative deviation 1}, and \eqref{e:negative deviation 2} there is $\ell_0$ in $\N$ such that for every integer $\ell\ge \ell_0$  the inequality \eqref{e:energy<1} holds.  
On the other hand, by \eqref{e:depth 1} and \eqref{e:depth 2}, for $\beta>0$ sufficiently large $Z_1(\beta\varphi,\beta\varphi(0))$ is finite. Thus, for every $\ell$ in $\N$ we have that 
$Z_\ell(\beta\varphi,\beta\varphi(0))$ is also finite. Therefore, for $\beta>0$ and 
$\ell$ in $\N$, both sufficiently large, we get \eqref{e:Zell<1}, which prove that $\varphi$ has a phase transition in temperature.

For the second part of the proposition, notice that by conditions \eqref{e:m0 robust} and \eqref{e:compact optimization} there is 
$\varepsilon$ in $(0,\min\{-c,-\eta\})$ such that
\begin{equation}
\label{e:m0}
m_0 
>
\begin{cases}
\left[2(n_0+1)^\theta +\frac{4\alpha^\frac{\gamma}{\alpha}\theta}{-(c+\varepsilon)}\left(D(|\varphi|_{1,\gamma}+\varepsilon)+2n_0(\|\varphi\|+\varepsilon) \right)\right]^{\frac{1}{\theta}} , & \text{ if  } \gamma < \alpha;\\
(n_0+1)^2\exp\left(\frac{4\alpha}{-(c+\varepsilon)} \left(D(|\varphi|_{1,\gamma}+\varepsilon)+2n_0(\|\varphi\|+\varepsilon)\right) \right), & \text{ if  } \gamma = \alpha.
\end{cases}
\end{equation}
Put $\widetilde{c}\= c+\varepsilon$, and observe that $\widetilde{c}<0$.  By \eqref{e:sup in compact}, if
$|\varphi-\tvarphi|_{1,\gamma}< \varepsilon$, then for every $x$ in $[0,x_{n_0}]$ we have
$\tvarphi(x)-\tvarphi(0) <  \widetilde{c} x^\gamma$,
which implies that \eqref{e:sup in compact} is an open condition in $\Sspace$.
Now, notice that for every $\tvarphi$ in $\Sspace$ such that $\|\tvarphi-\varphi\|_{1,\gamma}< \varepsilon$ we have 
$|\tvarphi|_{1,\gamma} < |\varphi|_{1,\gamma}+\varepsilon$ and $\|\tvarphi\| < \|\varphi\|+\varepsilon$.
Then, by \eqref{e:m0} we have \eqref{e:m0 robust} with $\varphi$ replaced by $\tvarphi$.
Thus, condition \eqref{e:m0 robust} is open in $\Sspace$.
Finally, since $\varepsilon< -\eta$ for every $\tvarphi$  satisfying $\|\varphi-\tvarphi\| < \varepsilon/2$,  we have   
\begin{equation}   
    \sup \left\{ \int \tvarphi \dd \nu -\tvarphi(0) \colon \nu \in \sM, \supp(\nu) \subseteq [x_{m_0}, 1] \right\} \le \eta + \varepsilon < 0.
  \end{equation}
  Showing that condition \eqref{e:compact optimization} is also open in $\Sspace$, which finishes the Main Theorem.
  
\appendix
\section{The Key Lemma for Intermittent maps}
\label{s:key lemma}
This appendix aims to prove the Key Lemma stated in \S\ref{ss:about the proof}. Recall that we have fixed $\alpha$ in $(0,+\infty)$, that $f$ denotes the \textsc{Manneville–Pomeau} map of parameter $\alpha$ defined in \S\ref{ss:pt}, that $\sM$ denotes the space of \textsc{Borel} probability measures invariant by $f$, that $x_1$ is the discontinuity point of $f$ and that $J_0$ is the interval $(x_1,1]$.

Let $\mu$ be in $\sM$ different from $\delta_0$.
Denote by $\supp(\mu)$ the topological support of $\mu$. When $0\notin \supp(\mu)$ one can use that for \textsc{Hölder} continuous potentials on topologically mixing subshift of finite type the equilibrium states have positive entropy (see for instance \cite[Theorem 1.25]{Bow08}) to prove that $P(\varphi) > \int \varphi \dd \mu$. For the general case, we use an iterated function system.

By the ergodic decomposition theorem, we can assume that $\mu$ is ergodic.  
Let $\Sigma$ be the set of all infinite words in the alphabet $\N$ and for every $n$ in $\N$ put 
\begin{equation}
\Sigma_n \= \N^n
\text{ and }
\Sigma^* \=  \bigcup_{n\in \N} \N^n.
\end{equation}
For every $n$ in $\N$ and every $\uell$ in $\Sigma_n$  the \emph{length of} $\uell$ is $n$ and it is denoted by $|\uell|$.
An infinite sequence of pairwise distinct functions  $(\phi_\ell)_{\ell \in \N}$ from $(x_1,1]$ into $(x_1,1]$ is called an \emph{Iterated Function System (IFS)}. For every $n$ in $\N$ and every finite word $\ell_1\cdots \ell_n$ in $\Sigma_n$
 put 
 \begin{equation}
\phi_{\ell_1\cdots \ell_n} \= \phi_{\ell_1}\circ \cdots \circ \phi_{\ell_n}.
\end{equation}
We say that the IFS is \emph{free}  if for all $\uell$ and $\uell'$ in $\Sigma^*$ with $\uell\neq \uell'$ we have that $\phi_{\uell}$ is different from 
$\phi_{\uell'}$. 
We say that the IFS is \emph{generated by $\mpf$} 
 if  for every $\ell$ in $\N$ there is $m_\ell$ in $\N$ such that  $\mpf^{m_\ell} \circ \phi_\ell$ is the identity on $(x_1,1]$.
 We say that $(m_\ell)_{\ell\in \N}$ is  the \emph{time sequence} of $(\phi_{\ell})_{\ell\in \N}$.
 For every $n$ in $\N$ and every finite word $\ell_1\cdots \ell_n$ in $\Sigma_n$
 put 
 \begin{equation}
m_{\ell_1\cdots \ell_n} \= m_{\ell_1} + \cdots + m_{\ell_n}.
\end{equation}
We say that $(\phi_\ell)_{\ell \in \N}$ is \emph{hyperbolic with respect to $\mpf$}, if there are constants $C>0$ and $\lambda>1$ such that for every $x\in (x_1,1]$, for every $\uell\in \Sigma^*$  and for every $j\in \{1, \dots, m_{\uell}\}$ one has
\begin{equation}
\label{e:hyp IFS}
|D\mpf^j(\mpf^{m_{\uell}-j}(\phi_{\uell}(x)))|\ge C\lambda^j.
\end{equation}

\begin{prop}
\label{p:good IFS}
For each \textsc{Hölder} continuous potential~$\varphi$ on~$[0,1]$ and for each  ergodic measure~$\mu$ in $\sM$  
distinct from~$\delta_0$, the following holds. There are a constant $C>0$ and a free hyperbolic IFS 
$(\phi_\ell)_{\ell\in \N}$  generated by $\mpf$ with strictly increasing time sequence $(m_\ell)_{\ell \in \N}$ such that
\begin{equation}
\label{e:good time}
\inf_{z\in (x_1,1]}S_{m_\ell} \varphi (\phi_\ell(z)) \ge m_\ell \int \varphi \dd \mu - C.
\end{equation}
\end{prop}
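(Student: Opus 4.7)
My plan is to build each $\phi_\ell$ as an inverse branch of $f^{m_\ell}$ whose image is a small neighborhood of a periodic point of $f$ living in a hyperbolic subsystem with uniformly bounded excursions into $[0, x_1]$. Since $\mu$ is ergodic and distinct from $\delta_0$, invariance of $\{0\}$ forces $\mu(\{0\}) = 0$, and then topological exactness of $f$ on $(0,1]$ (Lemma~\ref{l:generating partition}) combined with ergodicity yields $\mu(J_0) > 0$, so $\mu$ is genuinely supported in the expanding part of the dynamics.

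For the hyperbolicity condition~\eqref{e:hyp IFS} to hold uniformly along arbitrary compositions in the IFS, I need every $\phi_\ell$ to produce an orbit whose excursions into $[0, x_1]$ are uniformly bounded by a single integer $M$. I would therefore work inside the sub-SFT $X^{(M)}$, defined via the symbolic conjugacy of Lemma~\ref{l:extension}(4) as the set of points whose itinerary contains no $0$-block of length greater than $M$; the restriction $f|_{X^{(M)}}$ is topologically conjugate to a topologically mixing subshift of finite type. From $Df(x) = 1 + (\alpha+1) x^\alpha \ge 1 + (\alpha+1) x_1^\alpha > 1$ on $J_0$ and $Df \ge 1$ globally, any orbit contained in $X^{(M)}$ experiences at least one uniformly expanding step every $M+1$ iterates, yielding the uniform exponential growth required in~\eqref{e:hyp IFS} with $\lambda = (1 + (\alpha+1) x_1^\alpha)^{1/(M+1)}$; invariance of $X^{(M)}$ transfers this property to all finite compositions.

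The core step is to select this single $M$ together with a sequence of periodic measures $\mu_\ell$ on $X^{(M)} \cap J_0$, supported on periodic points $q_\ell$ of strictly increasing minimal periods $m_\ell$, such that $\int \varphi \dd \mu_\ell \ge \int \varphi \dd \mu - C_0/m_\ell$ for a uniform $C_0$. I would argue that such an $M$ exists because $\bigcup_M \{\int \varphi \dd \nu : \nu \in \sM,\ \supp(\nu) \subseteq X^{(M)}\}$ exhausts the range of $\int \varphi \dd \nu$ as $\nu$ varies in $\sM$: invariant measures on $X^{(M)}$ approximate any ergodic measure of $f$ as $M \to \infty$, and by Sigmund's theorem for the SFT $X^{(M)}$ periodic measures are dense in the space of invariant measures for $f|_{X^{(M)}}$. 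Once $M$ is fixed, the specification property of $f|_{X^{(M)}}$ produces periodic orbits of arbitrarily large period whose Birkhoff averages are as close to $\int \varphi \dd \mu$ as desired; when exact equality cannot be achieved I would glue two periodic orbits whose averages straddle $\int \varphi \dd \mu$ via specification, obtaining compound periodic orbits with average $\ge \int \varphi \dd \mu - O(1/m_\ell)$, the error being absorbed from the specification bridges.

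For each such $\mu_\ell$, I take $\phi_\ell$ to be the unique inverse branch of $f^{m_\ell}$ on $J_0$ whose image contains $q_\ell$ (Lemma~\ref{l:inverse branches}); this image lies in $J_0$ since $q_\ell \in J_0$, i.e.\ the first letter of its symbolic code is $1$. Combining the bounded distortion of $f^{m_\ell}$ on connected components of $f^{-m_\ell}(J_0)$ (Lemma~\ref{l:geometric-distortion}) with the Hölder continuity of $\varphi$, a standard bounded-variation-of-ergodic-sums estimate produces a constant $C_\varphi$, independent of $\ell$ and $z \in J_0$, with $|S_{m_\ell}\varphi(\phi_\ell(z)) - S_{m_\ell}\varphi(q_\ell)| \le C_\varphi$; since $S_{m_\ell}\varphi(q_\ell) = m_\ell \int \varphi \dd \mu_\ell \ge m_\ell \int \varphi \dd \mu - C_0$, the desired inequality~\eqref{e:good time} follows with $C = C_\varphi + C_0$. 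Freeness is obtained by selecting the symbolic codes of the $\phi_\ell$'s to form a prefix-free set, e.g.\ by prepending a fixed marker block to each code so that distinct concatenations give distinct inverse branches. I expect the main obstacle to be the approximation step in the case where $\mu$ is a maximizing measure for $\varphi$, since then $\int \varphi \dd \mu$ is only approached from below as $M \to \infty$, and a delicate specification-based gluing is needed to keep the approximation error of size $O(1/m_\ell)$ along the whole sequence.
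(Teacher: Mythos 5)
There is a genuine gap, and it is the one you flag at the end but then leave unresolved: fixing a single $M$ and working inside $X^{(M)}$ cannot work in general. If $\mu$ is ergodic, not supported in any $X^{(M)}$ (for instance the measure of maximal entropy, or any measure whose typical orbits have unbounded excursions towards $0$), and $\mu$ happens to be a maximizing measure for $\varphi$, then for every $M$ one has $\sup\{\int\varphi\,\dd\nu : \nu\in\sM,\ \supp\nu\subseteq X^{(M)}\} = \int\varphi\,\dd\mu - \varepsilon_M$ with $\varepsilon_M>0$. Every periodic measure $\mu_\ell$ supported in $X^{(M)}$ then satisfies $\int\varphi\,\dd\mu_\ell \le \int\varphi\,\dd\mu - \varepsilon_M$, so the required bound $\int\varphi\,\dd\mu_\ell \ge \int\varphi\,\dd\mu - C_0/m_\ell$ forces $m_\ell \le C_0/\varepsilon_M$, contradicting $m_\ell\to\infty$. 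No amount of specification gluing inside a fixed $X^{(M)}$ can overcome a fixed deficit $\varepsilon_M>0$. A diagonal choice $M_\ell\to\infty$ also fails, because the hyperbolicity constant $\lambda_M = (1+(\alpha+1)x_1^\alpha)^{1/(M+1)}\to 1$, so the compositions of the IFS no longer satisfy a uniform estimate of the form \eqref{e:hyp IFS}. Note that this is not a corner case: in the application to the Key Lemma the relevant $\mu$ is precisely a (near-)maximizing measure, so the case your argument cannot handle is the crux.

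The paper avoids the issue by not restricting to any bounded-excursion subsystem. It passes to the natural extension, picks a $\mu$-generic backward orbit $\uz$, and uses Lemma~\ref{l:nonnegative sums} (a ``bounded deficit'' refinement of Birkhoff for $\varphi-\int\varphi\,\dd\mu$) to produce a strictly increasing sequence of times $n_\ell$ with $S_{n_\ell}\varphi(z_{n_\ell}) \ge n_\ell\int\varphi\,\dd\mu - 1$; the resulting inverse branches have excursions as deep as $\mu$ typically requires. Uniform hyperbolicity along those particular branches, and along all their compositions, then comes from the positive Lyapunov exponent $\chi_\mu(f)>0$ together with the bounded distortion Lemma~\ref{l:geometric-distortion}, with a short uniformly expanding tail of length $3$ appended to each branch; freeness is arranged via Lemma~\ref{l:free} by choosing the appended tail to land at a preimage different from the one dictated by the backward orbit. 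The remaining pieces of your plan --- the distortion estimate turning a one-point bound into a uniform bound over $z\in J_0$, and the use of Lemma~\ref{l:inverse branches} --- are fine, but the approximation step needs to be replaced by something like the natural-extension argument.
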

The proof of Proposition \ref{p:good IFS} is in \S\ref{ss:pp}. Now, we assume the result and prove the remaining Key Lemma.

\subsection{Proof of the Key Lemma assuming Proposition \ref{p:good IFS}}
Assume that $\mu$ is an ergodic distinct from $\delta_0$. Let $(\phi_\ell)_{\ell\in \N}$ be the IFS given by Proposition \ref{p:good IFS} with time sequence $(m_\ell)_{\ell \in \N}$ and constant $C$. Fix $z_0$ in $(x_1,1]$. For every $N$ in $\N$ put
\begin{equation}
\Lambda_N \= \sum_{\uell \in \Sigma^*, m_{\uell}=N} \exp(S_N \varphi (\phi_{\uell}(z_0))).
\end{equation} 
Since the IFS $(\phi_\ell)_{\ell\in \N}$ is free,  by \eqref{e:top pressure} we have  
\begin{equation}
\Pf(\varphi) \ge  \limsup_{N\to +\infty} \frac{1}{N} \log \Lambda_N.
\end{equation} 
Now consider the following generating function
\begin{equation}
\label{e:g function 1}
\Xi(s) \= \sum_{N\in \N } \Lambda_N s^N.
\end{equation}
The radius of convergence $R$ of $\Xi(s)$ is at least $\exp(-\Pf(\varphi))$. 
Observe  that
\begin{equation}
\label{e:g function 2}
\Xi(s) = \sum_{ \uell \in \Sigma^* }  \exp(S_{m_{\uell}} \varphi (\phi_{\uell}(z_0))) s^{m_{\uell}}.
\end{equation} 
By \eqref{e:good time}  in Proposition \ref{p:good IFS} for every $\uell$ in $\Sigma^*$ we have that
\begin{equation}
S_{m_{\uell}}\varphi(\phi_{m_{\uell}}(z_0)) \ge m_{\uell} \int \varphi \dd \mu - |\uell| C.
\end{equation}
Then, defining 
\begin{equation}
\Phi(s)\= \sum_{\ell = 1}^{+\infty} \exp\left(m_{\ell} \int \varphi \dd \mu -  C\right) s^{m_{\ell}}
\end{equation}
we get that the power series in $s$
\begin{equation}
\label{e: powers of Phi}
\Phi(s) + \Phi(s)^2 + \Phi(s)^3 + \cdots
\end{equation}
has coefficients smaller than or equal to the corresponding coefficients of~$\Xi(s)$. 
Observe that, since $(m_\ell)_{\ell \in \N}$ is strictly increasing, the radius of convergence of~$\Phi(s)$
is~$\hR =\exp(-\int \varphi \dd \mu)$ and 
that
\begin{equation}
\lim_{s\to \hR^-} \Phi(s) = +\infty.
\end{equation}
Then, there is $s_0$  in $(0,\hR)$ such that $\Phi(s_0)$ is finite and  $\Phi(s_0)\ge 1$, and this implies that the radius of convergence of the series  \eqref{e: powers of Phi} is strictly smaller than $s_0$. Then
\begin{equation}
\exp(-P(\varphi)) \le R < s_0 < \hR = \exp\left(-\int \varphi \dd \mu\right),
\end{equation}
finishing the proof of the lemma.

\subsection{Proof of Proposition \ref{p:good IFS}}
\label{ss:pp}
Before proving Proposition \ref{p:good IFS}, we demonstrate two lemmas that establish that an IFS is free and has bounded distortion, along with a well-known but folklore result in abstract Ergodic Theory, for which we provide proof for the reader's convenience.
\begin{lemm}
\label{l:free}
Let $(z_n)_{n\in \N_0}$ be a sequence in $[0,1]$ such that $z_0$ is in $J_0$ and  for every $n$ in $\N_0$ we have that $z_{n}=\mpf(z_{n+1})$.
Let $M\ge 1$ be an integer and let $(n_\ell)_{\ell\in \N}$ be a strictly increasing sequence of positive integers such that 
$n_{\ell+1} \ge n_{\ell}+M$. For every $\ell$ in $\N$, let  $x_\ell$ be a point of $J_0$ in 
$\mpf^{-M}(z_{n_\ell})$ different from $z_{n_\ell+M}$,  and let  $\phi_\ell$ be the inverse branch of $\mpf^{n_\ell+M}$ from $J_0$ into itself such that $\phi_\ell(z_0)=x_\ell$. Then, the IFS $(\phi_\ell)_{\ell\in \N}$ generated by $f$ is free with time sequence 
$(n_\ell+M)_{\ell\in \N}$.
\end{lemm}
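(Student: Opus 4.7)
The plan is to encode each $\phi_\ell$ by its ``$g$-sequence''---the unique expression of $\phi_\ell$ as a composition of the two basic inverse branches $g_0, g_1$ of $\mpf$ (with images $[0,x_1]$ and $J_0=(x_1,1]$ respectively, by Lemma~\ref{l:inverse branches})---and then to read off the word $\uell$ from the concatenated $g$-sequence of $\phi_\uell=\phi_{\ell_1}\circ\cdots\circ\phi_{\ell_p}$.

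First I would spell out the canonical structure of each block. Since $\phi_\ell(z_0)=x_\ell$ and $\mpf^{M+j}(x_\ell)=z_{n_\ell-j}$ for $j=0,\ldots,n_\ell$, setting $\epsilon_k := 1$ if $z_k\in J_0$ and $0$ otherwise, the $g$-sequence of $\phi_\ell$ splits as $\pi_\ell\,\sigma_{n_\ell}$, where the ``tail'' $\sigma_{n_\ell}:=(g_{\epsilon_{n_\ell}},g_{\epsilon_{n_\ell-1}},\ldots,g_{\epsilon_1})$ depends only on the fixed backward orbit $(z_n)$, and the ``prefix'' $\pi_\ell\in\{g_0,g_1\}^M$ encodes the free initial itinerary $x_\ell,\mpf(x_\ell),\ldots,\mpf^{M-1}(x_\ell)$. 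Because an inverse branch of $\mpf^M$ is determined by a single point in its image (Lemma~\ref{l:inverse branches}), different prefixes $\pi$ applied to the fixed endpoint $z_{n_\ell}$ yield different preimages in $J_0$; hence the hypothesis $x_\ell\neq z_{n_\ell+M}$ is equivalent to $\pi_\ell\neq\pi_\ell^{\mathrm{can}}:=(g_{\epsilon_{n_\ell+M}},g_{\epsilon_{n_\ell+M-1}},\ldots,g_{\epsilon_{n_\ell+1}})$.

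Next I would argue by contradiction and induction on $|\uell|+|\uell'|$. Suppose $\phi_\uell=\phi_{\uell'}$ with $\uell=(\ell_1,\ldots,\ell_p)\neq\uell'=(\ell'_1,\ldots,\ell'_q)$. Equality of the concatenated $g$-sequences $\pi_{\ell_1}\sigma_{n_{\ell_1}}\cdots\pi_{\ell_p}\sigma_{n_{\ell_p}}=\pi_{\ell'_1}\sigma_{n_{\ell'_1}}\cdots\pi_{\ell'_q}\sigma_{n_{\ell'_q}}$ forces $m_\uell=m_{\uell'}$ (same length). Compare the last blocks: if $n_{\ell_p}=n_{\ell'_q}$ (equivalently $\ell_p=\ell'_q$, by strict monotonicity of $(n_\ell)$), then $\phi_{\ell_p}=\phi_{\ell'_q}$, and using that two inverse branches of the same iterate of $\mpf$ agreeing at a single point must coincide (Lemma~\ref{l:inverse branches}), I cancel this last factor and reduce to the words $(\ell_1,\ldots,\ell_{p-1})$ and $(\ell'_1,\ldots,\ell'_{q-1})$ of strictly smaller total length, to which the induction hypothesis applies.

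The main step, and the main obstacle, is the remaining case $n_{\ell_p}\neq n_{\ell'_q}$, say $n_{\ell_p}<n_{\ell'_q}$. The hypothesis $n_{\ell+1}\ge n_\ell+M$ together with the strict monotonicity of $(n_\ell)$ gives $n_{\ell'_q}-n_{\ell_p}\ge M$. I would then extract the last $n_{\ell_p}+M$ characters of both $g$-sequences: on the $\uell$-side this window is exactly $\pi_{\ell_p}\,\sigma_{n_{\ell_p}}$, while on the $\uell'$-side, because $n_{\ell'_q}-n_{\ell_p}\ge M$, the window lies entirely inside the $\sigma_{n_{\ell'_q}}$-portion of the last block, and a direct index computation using the explicit form of $\sigma$ shows it equals $(g_{\epsilon_{n_{\ell_p}+M}},g_{\epsilon_{n_{\ell_p}+M-1}},\ldots,g_{\epsilon_1})$. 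Its trailing $n_{\ell_p}$ characters agree with $\sigma_{n_{\ell_p}}$ automatically, but matching the first $M$ characters forces $\pi_{\ell_p}=\pi_{\ell_p}^{\mathrm{can}}$, contradicting the first step. The delicacy is purely in the index bookkeeping of this trailing-window computation, and the gap hypothesis $n_{\ell+1}\ge n_\ell+M$ is used precisely to keep the window inside $\sigma_{n_{\ell'_q}}$ so that no free bits from $\pi_{\ell'_q}$ leak in to spoil the contradiction.
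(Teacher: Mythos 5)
Your argument is correct and is essentially the same as the paper's: in both, one reduces to the case where the last blocks $\ell_p$, $\ell'_q$ differ, then compares the final $n_{\ell_p}+M$ symbols (equivalently, applies $f^{m_\uell-m_{\ell_p}}$ to both sides and evaluates at $z_0$), with the gap condition $n_{\ell+1}\ge n_\ell+M$ guaranteeing that the trailing window on the $\uell'$-side lies entirely in the backward-orbit tail, so that a collision would force $x_{\ell_p}=z_{n_{\ell_p}+M}$. Your itinerary/$g$-sequence encoding and explicit induction are just a symbolic-dynamics reformulation of the paper's direct algebraic peeling of the last factor.
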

\proof
Let $\uell = \ell_1\cdots\ell_n$ and $\uell' = \ell'_1\cdots \ell'_{k}$ be in $\Sigma^*$ with $\uell\neq \uell'$. 
Assume that  $m_{\uell} \neq m_{\uell'}$. Without loss of generality we assume that  
$m_{\uell} < m_{\uell'}$. Suppose we had $\phi_{\uell} = \phi_{\uell'}$
then  $f^{m_{\uell'}}\circ \phi_{\uell}  = \text{Id}|_{J_0}$, which implies that $f^{m_{\uell'}-m_{\uell}}= \text{Id}|_{J_0}$ and thus, 
$m_{\uell} = m_{\uell'}$ giving a contradiction. 
Now assume that $m_{\uell} = m_{\uell'}$. 
We also assume that $\ell_n\neq \ell'_k$, the general case, can be reduced to this one.  Without loss of generality, we assume that  
$\ell_{n} < \ell'_{k}$. In particular, we have $m_{\ell_n} < m_{\ell'_k}$. Suppose we had
$\phi_{\uell} = \phi_{\uell'}$ then $f^{m_{\uell} - m_{\ell_n}} \circ \phi_{\uell} = f^{m_{\uell'} - m_{\ell_n}} \circ  \phi_{\uell'}$ and thus, 
$$ 
\phi_{\ell_n} = f^{m_{\ell'_k} - m_{\ell_n}} \circ  \phi_{\ell'_k} = f^{n_{\ell'_k} - n_{\ell_n}} \circ  \phi_{\ell'_k}.
$$ 
Evaluating this last equality at $z_0$ and using that $n_{\ell'_k} - n_{\ell_n}\ge M$ we get that
$$
x_{\ell_n} = f^{n_{\ell'_k} - n_{\ell_n}}(x_{\ell'_k}) = f^{n_{\ell'_k} - n_{\ell_n}-M}(z_{n_{\ell'_k}}) = z_{n_{\ell_n}+M}. 
$$
However, by hypothesis, these two points are different.  Therefore,  $\phi_{\uell} \neq \phi_{\uell'}$ and this concludes the proof of the lemma.
\endproof

\begin{lemm}
\label{l:IFS distortion}
For each \textsc{Hölder} continuous potential $\varphi:[0,1]\to \R$ and for each
IFS  $(\phi_\ell)_{\ell\in \N}$  generated by $f$,  hyperbolic  with respect to $\mpf$ and with time sequence $(m_\ell)_{\ell \in \N}$
the following holds. There is a constant 
$\Delta'>0$ such that for every $\uell $ in $\Sigma^*$ and  all $x$ and $y$ in $J_0$ we have that 
\begin{equation}
|S_{m_{\uell}} \varphi (\phi_{\uell}(x))-S_{m_{\uell}} \varphi (\phi_{\uell}(y))|\le \Delta'.
\end{equation}
\end{lemm}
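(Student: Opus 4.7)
The plan is to exploit the standard telescoping/Hölder estimate combined with the hyperbolicity bound on derivatives. Let $\gamma_0\in(0,1]$ be the Hölder exponent of $\varphi$ and $K\in(0,+\infty)$ its Hölder constant, and let $C>0$ and $\lambda>1$ be the hyperbolicity constants in \eqref{e:hyp IFS}. For each $\uell\in\Sigma^*$ and $x,y\in J_0=(x_1,1]$, the quantity to bound is
\begin{equation*}
\bigl|S_{m_{\uell}}\varphi(\phi_{\uell}(x))-S_{m_{\uell}}\varphi(\phi_{\uell}(y))\bigr|
\le
\sum_{j=0}^{m_{\uell}-1} K\,\bigl|f^j(\phi_{\uell}(x))-f^j(\phi_{\uell}(y))\bigr|^{\gamma_0}.
\end{equation*}

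First I would control each term of this sum by pulling back from $J_0$. Since $\phi_{\uell}$ is an inverse branch of $f^{m_{\uell}}$ over $J_0$, the set $\phi_{\uell}(J_0)$ is an interval, and for each $j\in\{0,\ldots,m_{\uell}-1\}$ the map $f^{m_{\uell}-j}$ sends the interval $f^j(\phi_{\uell}(J_0))$ diffeomorphically onto $J_0$. Apply the mean value theorem on this interval: there exists $u_j\in J_0$ with $\xi_j:=f^j(\phi_{\uell}(u_j))$ lying between $f^j(\phi_{\uell}(x))$ and $f^j(\phi_{\uell}(y))$ such that
\begin{equation*}
|x-y|
=
\bigl|f^{m_{\uell}-j}(f^j(\phi_{\uell}(x)))-f^{m_{\uell}-j}(f^j(\phi_{\uell}(y)))\bigr|
=
Df^{m_{\uell}-j}(\xi_j)\cdot\bigl|f^j(\phi_{\uell}(x))-f^j(\phi_{\uell}(y))\bigr|.
\end{equation*}
Because $\xi_j$ has the form $f^{m_{\uell}-(m_{\uell}-j)}(\phi_{\uell}(u_j))$ with $u_j\in(x_1,1]$ and $m_{\uell}-j\in\{1,\ldots,m_{\uell}\}$, the hyperbolicity hypothesis \eqref{e:hyp IFS} gives $Df^{m_{\uell}-j}(\xi_j)\ge C\lambda^{m_{\uell}-j}$. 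Using $|x-y|\le|J_0|$ and reindexing with $k=m_{\uell}-j$ yields
\begin{equation*}
\bigl|f^j(\phi_{\uell}(x))-f^j(\phi_{\uell}(y))\bigr|
\le
\frac{|J_0|}{C\lambda^{m_{\uell}-j}}.
\end{equation*}

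Finally I would sum the geometric series: substituting the last estimate into the Hölder bound and extending the sum from $k=1$ to $k=\infty$,
\begin{equation*}
\bigl|S_{m_{\uell}}\varphi(\phi_{\uell}(x))-S_{m_{\uell}}\varphi(\phi_{\uell}(y))\bigr|
\le
K\,\Bigl(\frac{|J_0|}{C}\Bigr)^{\gamma_0}\sum_{k=1}^{+\infty}\lambda^{-k\gamma_0}
=
\frac{K\,(|J_0|/C)^{\gamma_0}}{1-\lambda^{-\gamma_0}}
=:\Delta',
\end{equation*}
which is finite since $\lambda>1$ and $\gamma_0>0$, and the bound is independent of $\uell$, $x$, and $y$.

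The only subtlety worth checking carefully is the applicability of the mean value theorem, specifically that the intermediate point $\xi_j$ produced by MVT really lies in $f^j(\phi_{\uell}(J_0))$ so that the hyperbolicity bound \eqref{e:hyp IFS} applies to it. This is immediate because $f^j\circ\phi_{\uell}$ is a homeomorphism from the interval $J_0$ onto the interval $f^j(\phi_{\uell}(J_0))$, so the connected set between $f^j(\phi_{\uell}(x))$ and $f^j(\phi_{\uell}(y))$ is contained in the image and is therefore of the required form. The rest is routine estimation.
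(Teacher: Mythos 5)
Your proof is correct and follows essentially the same route as the paper: bound each term in the telescoped Hölder estimate by using the hyperbolicity hypothesis to contract $|f^j(\phi_{\uell}(x))-f^j(\phi_{\uell}(y))|$ exponentially in $m_{\uell}-j$, then sum the resulting geometric series. The only difference is that you spell out the mean value theorem step that the paper leaves implicit.
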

\proof
Let $\alpha $ be in $(0,1]$ such that  potential $\varphi$ is in $C^{\alpha}(\R)$.
By \eqref{e:hyp IFS} there are  constants $C>0$ and $\lambda>1$ such that   for every $j$ in $\{1,\ldots,m_{\uell}\}$  we have that 
$$
|f^{m_{\uell}-j}(\phi_{\uell}(x))-f^{m_{\uell}-j}(\phi_{\uell}(y))| \le C\lambda^{-j}.
$$
Then,
\begin{equation}
|S_{m_{\uell}} \varphi (\phi_{\uell}(x))-S_{m_{\uell}} \varphi (\phi_{\uell}(y))|\le |\varphi|_{\alpha}C^{\alpha} \sum_{j=1}^{m_{\uell}}(\lambda^{\alpha})^{-j}.
\end{equation}
Taking $\Delta' \= |\varphi|_{\alpha}C^{\alpha} \sum_{j=1}^{+\infty}(\lambda^{\alpha})^{-j}$ we finish the proof of the lemma.
\endproof

\begin{lemm}
\label{l:nonnegative sums}
Let~$(X,\sB,\mu)$ be a probability space and let~$T:X\to X$ be an ergodic measure-preserving map. 
Then, for each integrable function~$\varphi:X\to \R$  with null integral, there is a full measure set of~$x$ in $X$ such that
\begin{equation}
\limsup_{n\to+\infty} S_n\varphi(x)\ge 0.
\end{equation}
\end{lemm}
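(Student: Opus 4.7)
The plan is to argue by contradiction. Suppose there is a set of positive $\mu$-measure on which $\limsup_n S_n\varphi < 0$. Writing this set as the countable union over rational $\delta > 0$ and $N \in \N$ of
\begin{equation}
A_{N,\delta} \= \{x \in X : S_n\varphi(x) \le -\delta \text{ for every } n \ge N\},
\end{equation}
I can choose $\delta$ and $N$ so that $A \= A_{N,\delta}$ has $\mu(A) > 0$. The intuition is that every visit of the orbit of $x$ to $A$ forces the ergodic sum of $\varphi$ to drop by at least $\delta$ within $N$ further iterates. Since the Birkhoff ergodic theorem guarantees visits to $A$ with positive density, the ergodic sums along a well-chosen subsequence will decrease linearly, contradicting the Birkhoff theorem applied to $\varphi$ itself, which (using $\int\varphi\,d\mu = 0$ and ergodicity of $T$) yields $S_n\varphi(x)/n \to 0$ for $\mu$-a.e. $x$.

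To execute this, I would first apply the Birkhoff theorem to $\one_A$ to get, for $\mu$-a.e. $x$,
\begin{equation}
\lim_{n \to +\infty} \frac{1}{n} \#\{0 \le k < n : T^k x \in A\} = \mu(A) > 0,
\end{equation}
and then greedily extract return times $0 \le k_1 < k_2 < \cdots$ to $A$ subject to the spacing constraint $k_{j+1} - k_j \ge N$. Since each selected $k_i$ excludes at most $N - 1$ visits (those in $(k_i, k_i + N)$), a pigeonhole count gives, for all large $n$,
\begin{equation}
J_n \ge \frac{\mu(A)}{2N}\, n,
\end{equation}
where $J_n$ is the number of selected returns with $k_{J_n} < n$. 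By the definition of $A$, each consecutive gap contributes $S_{k_{j+1} - k_j}\varphi(T^{k_j} x) \le -\delta$, so chaining via the cocycle identity yields
\begin{equation}
S_{k_{J_n}}\varphi(x) \le S_{k_1}\varphi(x) - (J_n - 1)\delta.
\end{equation}

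Dividing by $k_{J_n}$, which tends to $+\infty$ and satisfies $k_{J_n} < n$, the right-hand side has $\limsup \le -\mu(A)\delta/(4N) < 0$, whereas Birkhoff's theorem applied to $\varphi$ forces $S_{k_{J_n}}\varphi(x)/k_{J_n} \to 0$. This contradiction yields $\mu(\{\limsup_n S_n\varphi < 0\}) = 0$, which is the desired conclusion. The only delicate step is the pigeonhole estimate on $J_n$; the rest is a routine combination of the Birkhoff theorem with Poincar\'e recurrence, and I foresee no serious obstacle.
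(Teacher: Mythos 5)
Your proof is correct and follows essentially the same strategy as the paper's: reduce by contradiction to a positive-measure set of the form $\{x : S_n\varphi(x) \le -\delta \text{ for all } n \ge N\}$, invoke Birkhoff for both $\one_A$ and $\varphi$ at a common generic point, chain together drops of at least $\delta$ over returns spaced at least $N$ apart, and derive a linear decay of $S_n\varphi(x)$ incompatible with $S_n\varphi(x)/n \to 0$. The only difference is bookkeeping: you enforce the spacing constraint by a greedy selection of returns together with a pigeonhole count, whereas the paper simply subsamples every $k$-th return time $n_{mk}$ (automatically spaced $\ge k$) and uses the linear bound $n_\ell \le P\ell$ that follows from the positive density of visits. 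The paper's bookkeeping is marginally cleaner, but the arguments are conceptually identical.
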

\proof
It is enough to prove that for every $\varepsilon>0$ and every $k$ in $\N$ the set
\begin{equation}
A\=\{x\in X \colon \text{ for every } n\in \N \text{ such that } n \ge k \text{ one has } S_n\varphi \le -\varepsilon\}
\end{equation}
has measure 0. Suppose we had that there are $\varepsilon>0$ and $k$ in $\N$ such that $\mu(A)>0$. By the 
Birkhoff Ergodic Theorem there is $x$ in $A$ such that
\begin{equation}
\label{e:birkhoff 0}
\lim_{n\to +\infty} \frac{1}{n} S_n\varphi(x) = 0,
\end{equation}
and 
\begin{equation}
\label{e:birkhoff positive}
\lim_{n\to +\infty} \frac{1}{n} S_n\textbf{1}_A(x) = \mu(A).
\end{equation}
Denote by $(n_\ell)_{\ell\in \N_0}$ the sequence of return times of $x$ to $A$ with $n_0=0$. 
For every $\ell$ in $\N_0$ we have $n_\ell \ge \ell$ and 
since $\mu(A)>0$ we also have that 
there is $P$ in $[1,+\infty)$ such that for every $\ell$ in $\N_0$,
\begin{equation}\label{e:lineal returns}
n_\ell \le P \ell.
\end{equation}
Since for every $m$ in $\N$ we have $n_{(m+1)k}\ge n_{mk}+k$, by \eqref{e:lineal returns}, for every $m$ in $\N$ we get
\begin{equation}
S_{n_{mk}} \varphi(x) = \sum_{j=0}^{m-1} S_{n_{(j+1)k}-n_{jk}}\varphi(T^{n_{jk}}x) \le m(-\varepsilon) \le \frac{n_{mk}}{Pk}(-\varepsilon).
\end{equation}
Together with \eqref{e:birkhoff 0} this implies 
\begin{equation}
0 = \lim_{m\to +\infty} \frac{1}{n_{mk}}S_{n_{mk}} \varphi(x)  \le  \frac{1}{Pk}(-\varepsilon) < 0,
\end{equation}
which gives a contradiction and finishes the proof of the lemma.
\endproof

Put $I\=[0,1]$ and let $\mu$ be an ergodic measure in $\sM$ distinct from $\delta_0$. Denote by $(\hI,\hmpf)$ the natural extension  of $(I,\mpf)$. 
That is, $\hI$ is the set of sequences $(z_n)_{n\in \N_0}$ in $I$ such that for every  $n$ in $\N_0$ one has $z_n = \mpf(z_{n+1})$, and 
$\hmpf$ is the bijective map  from $\hI $ onto  $\hI$ defined for every $(z_n)_{n\in \N_0}$ in $\hI$ by 
\begin{equation}
\hmpf((z_n)_{n\in \N_0}) = (\mpf(z_0), z_0, z_1, z_2,\ldots, z_n, \ldots).
\end{equation}
The space $\hI$ inherits a \textsc{Borel} $\sigma$-algebra as a subset of the product space $\Pi_{n=0}^{+\infty} I$ 
and the map $\hf$ is a measurable isomorphism.  Denote by $\Pi:\hI\to I$ the projection onto the zeroth coordinate. We have that 
$\Pi\circ \hmpf = \mpf\circ \Pi$, and that there is a unique invariant probability measure $\nu$ for $\hmpf$ such that 
$\Pi_* \nu = \mu$. 
Since $\mu$ is ergodic, the measure $\nu$ is also ergodic for~$\hmpf$ and~$\hmpf^{-1}$. 
Observe that since $\mu$ is  different from $\delta_0$ we have $\mu(J_0)>0$ and  $\nu(\Pi^{-1}(J_0))>0$. Put 
$\hI' \=\Pi^{-1}(J_0)$. Fix $\uz$ in $\hI'$ such that $\uz$ is  generic for  the Ergodic Theorem for $\hmpf^{-1}$, $\nu$ and the bounded measurable function  $\log D\mpf \circ  \Pi$. Then, we have 
\begin{equation}
\label{e:positive Lyapunov}
\lim_{n\to +\infty}\frac{1}{n}\sum_{j=0}^{n-1} \log D\mpf \circ  \Pi (\hmpf^{-j}\uz) = \chi_\mu(\mpf) > 0.
\end{equation}
By Lemma \ref{l:nonnegative sums}, we can choose $\uz$ so that, in addition, we have
\begin{equation}
\limsup_{n\to +\infty} \sum_{j=0}^{n-1} \left( \varphi \circ  \Pi (\hmpf^{-j}\uz) - \int \varphi \dd \mu \right) \ge 0.
\end{equation}
By Lemma \ref{l:inverse branches}  for every  $\ell$ in $\N$ there is a unique inverse branch $\tphi_\ell$  of $\mpf^\ell$ defined on $(0,1]$ verifying
$\Pi(\hmpf^{-\ell}\uz)\in \tphi_\ell((0,1])$. 
By Lemma \ref{l:geometric-distortion} and  \eqref{e:positive Lyapunov} there are $C'>0$ and $\lambda>1$ such that for every $n$ in $\N$ and every 
$z$ in $(x_1,1]$ we have
\begin{equation}
\label{e:expanding IFS}
D\mpf^n(\tphi_n(z)) \ge C'\lambda^n.
\end{equation}
Then, there is a strictly increasing sequence $(n_\ell)_{\ell\in \N}$ in $\N$ such that 
\begin{equation}
\label{e:sum and times}
S_{n_\ell}\varphi (z_{n_\ell}) \ge n_{\ell}\int \varphi \dd \mu - 1, n_{\ell+1} \ge n_\ell + 3, C'\lambda^\frac{n_1}{2} > \lambda^\frac{3}{2},
\end{equation} 
and  the sequence $(z_{n_\ell})_{\ell \in \N}$ converges to  some $w$ in $I$.

Now, we distinguish two cases. First recall that $y_2$ is the unique point in $(x_1,1]$ that satisfies $x_1=\mpf(y_2)$,
and observe that $\mpf^2$ maps $(x_1,y_2]$ and $(y_2,1]$  bijectively onto $(0,1]$.
Let $y'$ and $y''$ the preimage by $\mpf^2$ of $x_1$ in $(x_1,y_2]$ and $(y_2,1]$, respectively.
We have that $y'<y_2<y''$ and that $\mpf^3$ maps each of the intervals $(x_1,y']$ and $(y'',1]$  bijectively onto $(0,1]$. 
The first case is when $w\in [0,y_2]$.  
By passing to a subsequence, we can assume  that  
\begin{equation}
\label{e:free1}
(z_{n_\ell})_{\ell\in \N} \text{ is in }(0,y''].
\end{equation}
For every $\ell \in \N$ we put
\begin{equation}
\label{e:branch def1}
\phi_\ell \=  (\mpf^3|_{(y'',1]})^{-1}\circ \tphi_{n_\ell}|_{J_0}. 
\end{equation} 

The second case is when  $w\in (y_2,1]$. 
Again, by passing to a subsequence, we can assume that
\begin{equation}
\label{e:free3}
(z_{n_\ell})_{\ell\in \N} \text{ is in }(y',1].
\end{equation}
For every $\ell \in \N$ we put
\begin{equation}
\label{e:branch def3}
\phi_\ell \=  (\mpf^3|_{(x_1,y']})^{-1}\circ \tphi_{n_\ell}|_{J_0}. 
\end{equation} 
In both cases, put $m_\ell \= n_\ell +3$. We have that $(\phi_\ell)_{\ell \in \N}$ is an IFS generated by $f$ with time sequence 
$(m_\ell)_{\ell \in \N}$.

By \eqref{e:free1}, \eqref{e:branch def1}, \eqref{e:free3} and \eqref{e:branch def3}, we have in all of the cases  that  for every $\ell$ in $\N$, the point
$x_\ell \= \phi_\ell(z_0)$  is in $f^{-3}(z_{n_\ell})$  and it is  different from $z_{m_\ell}$. Thus, by Lemma \ref{l:free} and the second inequality in \eqref{e:sum and times}, we get that the IFS $(\phi_\ell)_{\ell \in \N}$ is free.

Now we prove that the IFS $(\phi_\ell)_{\ell \in \N}$ is hyperbolic.  By \eqref{e:expanding IFS}, the second and third inequalities in \eqref{e:sum and times},  for every $z$ in $J_0$ and for every $j\in \{0,1,2\}$ we have in all of the cases that 
\begin{equation}
D\mpf^{m_\ell-j}(f^j (\phi_{\ell}(z)))  \ge \lambda^{\frac{m_\ell-j}{2}}.
\end{equation}
Again together with  \eqref{e:expanding IFS} this implies that  for every $z$ in $J_0$,  for every $\uell\in \Sigma^*$  and for every $j\in \{1, \dots, m_{\uell}\}$ one has
\begin{equation}
\label{e:hyperbolicity}
D\mpf^j( f^{m_{\uell}-j}(\phi_{\uell}(z))) \ge C' \lambda^{\frac{j}{2}},
\end{equation}
and thus, the IFS $(\phi_\ell)_{\ell \in \N}$ is hyperbolic with constants $C'>0$ and $\lambda^{\frac{1}{2}}>1$.

It remains to prove \eqref{e:good time}. Put 
\begin{equation}
C_1\= -\inf_{x\in [0,1]} \varphi \text{ and } C''\= 1+ 3C_1 +3 \int \varphi \dd\mu.
\end{equation}
By the first inequality in  \eqref{e:sum and times}, we have 
\begin{equation}
\label{e:sum z_0}
\begin{split}
S_{m_\ell}\varphi(\phi_{\ell}(z_0)) & = S_{3}\varphi(\phi_{\ell}(z_0))+S_{n_\ell}\varphi(\tphi_{n_\ell}(z_0)) \\
& \ge -3C_1  + n_\ell \int \varphi \dd\mu -1 \\
& \ge   m_\ell \int \varphi \dd\mu -C''.
\end{split}
\end{equation}
Together with  Lemma \ref{l:IFS distortion}, this finishes the proof of   \eqref{e:good time}  with $C= C''+\Delta'$, concluding the proof of Proposition \ref{p:good IFS}.


\end{document}